\documentclass[12pt]{amsart}
\usepackage{preamble}

\title[$K$-polynomials of determinantal varieties]{Towards combinatorial derivations of $K$-polynomials for determinantal varieties}
\author{Liam Buttitta}
\address{Dept.~of Mathematics, U.~Illinois at Urbana-Champaign, Urbana, IL 61801, USA}
\email{liamb3@illinois.edu}
\author{Ada Stelzer}
\address{Dept.~of Mathematics, U.~Illinois at Urbana-Champaign, Urbana, IL 61801, USA}
\email{astelzer@illinois.edu}
\date{August 14, 2026}

\begin{document}

\begin{abstract}
    Let $\mathfrak{X}_k\subseteq\Mat_{m, n}$ denote the variety of $m\times n$ complex matrices with rank at most $k$. 
    The power series and rational expressions for the Hilbert series of $\mathfrak{X}_k$ are known by geometric arguments, and equating these expressions yields a family of formulas generalizing the classical Cauchy and dual Cauchy identities.
    We pose the problem of giving a direct combinatorial proof of these formulas for $0 < k < \min\{m, n\}$. When $k=1$ or $k=\min\{m, n\}-1$, we give such a proof via an explicit sign-reversing involution on certain sets of Littlewood--Richardson tableaux.
\end{abstract}

\maketitle

\section{Introduction}

This paper builds towards combinatorial proofs for generalizations of the \emph{Cauchy identity}, a classical result in symmetric function theory (see, e.g. \cite[Theorem 7.12.1]{ECII}).
Fix variables $\vec{x} = (x_1,\dots,x_m)$ and $\vec{y} = (y_1,\dots, y_n)$, and let $s_\lambda(\vec{x})$ denote the \emph{Schur polynomial} indexed by the partition $\lambda$ (see Section~\ref{sec:definitions} for definitions).
The Cauchy identity then states that
\begin{equation}\label{eqn:classical Cauchy}
    \sum_{\ell(\lambda) \leq \min\{m, n\}}s_\lambda(\vec{x})s_\lambda(\vec{y}) = \frac{1}{\prod_{(i, j)\in[m]\times[n]}(1-x_iy_j)}.
\end{equation}
An analogous result, the \emph{dual Cauchy identity} \cite[Theorem 7.14.3]{ECII}, states that 
\begin{equation}\label{eqn:dual Cauchy}
    \sum_{\mu\subseteq[m]\times[n]}(-1)^{|\mu|}s_\mu(\vec{x})s_{\mu'}(\vec{y})
    = \prod_{(i, j)\in[m]\times[n]}(1-x_iy_j),
\end{equation}
where $\mu'$ is the \emph{conjugate} of the partition $\mu$. 

There is a common generalization of \eqref{eqn:classical Cauchy} and \eqref{eqn:dual Cauchy}, following from work of P.~Doubilet--G.-C.~Rota--J.~Stein \cite{DRS} and A.~Lascoux \cite{Lascoux} in representation theory and commutative algebra:
\begin{theorem}[Generalized Cauchy identity]\label{thm:gen. Cauchy}
    For each $k$ such that $0\leq k\leq\min\{m, n\}$,
    \begin{equation}\label{eqn:gen. Cauchy}
        \sum_{\ell(\lambda)\leq k}s_\lambda(\vec{x})s_\lambda(\vec{y}) = \frac{\sum_{\mu\in[m-k]\times[n-k]}(-1)^{|\mu|}s_{\mu_{(k)}}(\vec{x})s_{(\mu')_{(k)}}(\vec{y})}{\prod_{(i, j)\in[m]\times[n]}(1-x_iy_j)},
    \end{equation}
    where $\mu_{(k)}$ is the \emph{$k$-stabilization} of $\mu$ \textup{(}see Definition~\ref{def:stab}\textup{)}.
\end{theorem}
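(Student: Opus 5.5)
The identity is the equality of two expressions for the $\mathbb{Z}^m\times\mathbb{Z}^n$-graded Hilbert series of the coordinate ring $R_k=\mathbb{C}[\mathfrak{X}_k]$ of the determinantal variety. I will prove it by computing that series twice, once as a power series and once as a rational function, and comparing. Let $T=(\mathbb{C}^*)^m\times(\mathbb{C}^*)^n$ act on $\Mat_{m,n}$ by row and column scaling, so that the coordinate $z_{ij}$ has weight $x_iy_j$. The ideal $I_{k+1}$ generated by the $(k+1)$-minors is prime and $GL_m\times GL_n$-stable, and $R_k=\mathbb{C}[z_{ij}]/I_{k+1}$.

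For the left side I use the Doubilet--Rota--Stein straightening law (bideterminant basis). It gives the $GL_m\times GL_n$-decomposition
\[
\mathbb{C}[\Mat_{m,n}]\cong\bigoplus_{\ell(\lambda)\le\min\{m,n\}}S_\lambda(\mathbb{C}^m)\otimes S_\lambda(\mathbb{C}^n).
\]
The ideal $I_{k+1}$ is spanned by the bitableaux whose shape has first column of length greater than $k$, so
\[
R_k\cong\bigoplus_{\ell(\lambda)\le k}S_\lambda\otimes S_\lambda .
\]
Taking characters gives $\operatorname{Hilb}(R_k)=\sum_{\ell(\lambda)\le k}s_\lambda(\vec x)s_\lambda(\vec y)$. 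For $k=\min\{m,n\}$ this is just \eqref{eqn:classical Cauchy}.

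For the right side I use Lascoux's equivariant minimal free resolution of $R_k$ over $S=\mathbb{C}[z_{ij}]$. I obtain it by Kempf--Lascoux--Weyman geometric technique. Resolve $\mathfrak{X}_k$ by the total space of the bundle $\operatorname{Hom}(\mathbb{C}^m/\mathcal{R},\mathbb{C}^n)$ over $\operatorname{Gr}(k,\mathbb{C}^m)$ (or symmetrically over a Grassmannian of the other factor), where $\mathcal{R}$ is the tautological subbundle. The resolution is built from $H^\bullet\bigl(\operatorname{Gr},\bigwedge^\bullet(\mathcal{R}\otimes\mathbb{C}^{n*})\bigr)$. Apply Cauchy's decomposition
\[
\textstyle\bigwedge^\bullet(\mathcal{R}\otimes V)=\bigoplus_\nu S_\nu\mathcal{R}\otimes S_{\nu'}V.
\]
Then compute the cohomology by Bott's theorem: each term $S_\nu\mathcal{R}$ either has all cohomology vanish or contributes in exactly one degree. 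The surviving $\nu$ are indexed by $\mu\subseteq[m-k]\times[n-k]$. After reindexing, the Bott reordering of the weight produces the representation $S_{\mu_{(k)}}(\mathbb{C}^m)\otimes S_{(\mu')_{(k)}}(\mathbb{C}^n)$, where $\mu_{(k)}$ is exactly the $k$-stabilization of Definition~\ref{def:stab} (adding a $k\times k$-type hook/square block). The homological degree has the parity of $|\mu|$ shifted by the dimension count. Then
\[
\operatorname{Hilb}(R_k)=\frac{\sum_i(-1)^i\operatorname{char}F_i}{\operatorname{Hilb}(S)},
\]
and $\operatorname{Hilb}(S)=\prod(1-x_iy_j)^{-1}$, which yields the right side of \eqref{eqn:gen. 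Cauchy}. The free module generated by $S_\lambda\otimes S_\mu$ in polynomial degree $d$ contributes $s_\lambda(\vec x)s_\mu(\vec y)$ times $\operatorname{Hilb}(S)$. I also need the sign $(-1)^i$ to equal $(-1)^{|\mu|}$ up to a global sign, which I fix by checking $\mu=\emptyset$.

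The main obstacle is the Bott computation. It requires determining which concatenated weights $(\alpha\,|\,\nu)$ are nonsingular, and identifying their dominant reorderings with $\mu_{(k)}$ together with the exact length of the sorting permutation. I would organize this by the standard "Durfee-square" bookkeeping. The key point is that $\nu\subseteq k\times n$ has nonvanishing cohomology iff after adding $\rho$ the entries are distinct. I would check this against the expected answer in the boundary cases: $k=0$ should recover \eqref{eqn:dual Cauchy}, and $k=\min\{m,n\}$ has $[m-k]\times[n-k]$ degenerate, giving numerator $1$. Alternatively, since the paper states the theorem as following from \cite{DRS,Lascoux}, one may simply cite Lascoux's resolution for the numerator and the straightening law for the left side, and equate.
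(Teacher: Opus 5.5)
Your overall strategy is the same as the paper's. The paper gives no independent proof of Theorem~\ref{thm:gen. Cauchy}; it obtains it exactly as you propose, by equating the Doubilet--Rota--Stein expansion of the $G$-equivariant Hilbert series of $\C[\mathfrak{X}_k]$ with the rational expression read off from Lascoux's equivariant resolution \cite[Section~6.1]{Weyman}. So your closing ``cite both and equate'' option is a complete argument that matches the paper.

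The self-contained rederivation of Lascoux's numerator, however, has concrete errors as written.

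(i) The Grassmannian is wrong. Maps $\C^m\to\C^n$ vanishing on a rank-$k$ subbundle $\mathcal{R}$ have rank at most $m-k$. So $\operatorname{Hom}(\C^m/\mathcal{R},\C^n)$ over $\mathrm{Gr}(k,\C^m)$ resolves $\mathfrak{X}_{m-k}$. Bott with $\nu\subseteq k\times n$ would then return $(m-k)$-stabilizations indexed by $\mu\subseteq[k]\times[n-m+k]$, contradicting your stated index set. You need $\mathrm{Gr}(m-k,\C^m)$, so that $\operatorname{rank}\mathcal{R}=m-k$ and $\nu\subseteq(m-k)\times n$.

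(ii) A global sign cannot be fixed from the single term $\mu=\emptyset$. The relative parities of the different terms are exactly what must be computed, and ``parity of $|\mu|$ shifted by the dimension count'' is unjustified. The Bott step you call the main obstacle is short and settles this.
\begin{itemize}
\item The weight $(0^k\mid\nu)+\rho$ has entries $m-1,\dots,m-k$ followed by $\nu_i+m-k-i$.
\item These are distinct iff, for each $i$, either $\nu_i\le i-1$ or $\nu_i\ge i+k$.
\item Put $s=\#\{i:\nu_i\ge i+k\}$. Sorting moves $s$ entries past $k$ entries, so the length is $sk$.
\item Subtracting $\rho$ gives $(\nu_1-k,\dots,\nu_s-k,s^k,\nu_{s+1},\dots)=\mu_{(k)}$, where $\mu$ is $\nu$ with $k$ removed from each of its first $s$ rows.
\item One checks that $r(\mu)=s$, $\mu\subseteq[m-k]\times[n-k]$ and $\nu'=(\mu')_{(k)}$, and that $\nu\mapsto\mu$ is a bijection onto $[m-k]\times[n-k]$.
\end{itemize}
Note that the inserted block is $k\times r(\mu)$, not $k\times k$. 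The homological degree is $|\nu|-sk=|\mu|$ exactly, so the sign is $(-1)^{|\mu|}$ with no shift.

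(iii) The Euler characteristic gives $\operatorname{Hilb}(R_k)=\operatorname{Hilb}(S)\cdot\sum_i(-1)^i\operatorname{char}(F_i\otimes_S\C)$. You must multiply by $\operatorname{Hilb}(S)$, not divide by it, and use the characters of the generators rather than of $F_i$ itself.

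(iv) The geometric technique a priori resolves $q_*\mathcal{O}_Z$, where $q\colon Z\to\mathfrak{X}_k$ is the desingularization. To conclude that the complex resolves $R_k$ itself, invoke Weyman's basic theorem: all terms lie in homological degrees $|\mu|\ge 0$ and $F_0=S$.
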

Explicitly, \eqref{eqn:classical Cauchy} is the $k = \min\{m, n\}$ case of \eqref{eqn:gen. Cauchy}, whereas \eqref{eqn:dual Cauchy} is the $k = 0$ case, after multiplying across by the denominator.
Our main problem is as follows:
\begin{problem}\label{prob:main}
    Give an elementary combinatorial proof of Theorem~\ref{thm:gen. Cauchy}.
\end{problem}
In the classical cases of \eqref{eqn:classical Cauchy} and \eqref{eqn:dual Cauchy}, Problem~\ref{prob:main} is solved using the (ordinary or dual) \emph{Robinson--Schensted--Knuth correspondence} (RSK) \cite[Section 7.11]{ECII}. 
For general $k$, Problem~\ref{prob:main} remains open.

Our main new result solves Problem~\ref{prob:main} in the cases where $k = 1$ or $k = \min\{m, n\}-1$.
\begin{theorem}\label{thm:main}
    When $k = 1$ or $k = \min\{m, n\}-1$, Theorem~\ref{thm:gen. Cauchy} admits a proof by an explicit sign-reversing involution on certain sets of Littlewood--Richardson tableaux.
\end{theorem}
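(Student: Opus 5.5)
We multiply \eqref{eqn:gen. Cauchy} through by the denominator. By the dual Cauchy identity \eqref{eqn:dual Cauchy}, the product $\prod_{(i,j)}(1-x_iy_j)$ equals $\sum_{\nu\subseteq[m]\times[n]}(-1)^{|\nu|}s_\nu(\vec{x})s_{\nu'}(\vec{y})$. The left side then becomes
\[
\sum_{\ell(\lambda)\le k}\ \sum_{\nu\subseteq[m]\times[n]}(-1)^{|\nu|}s_\lambda(\vec{x})s_\nu(\vec{x})\,s_\lambda(\vec{y})s_{\nu'}(\vec{y}).
\]
We expand both products by the Littlewood--Richardson rule, $s_\lambda s_\nu=\sum_\alpha c^\alpha_{\lambda\nu}s_\alpha$ and $s_\lambda s_{\nu'}=\sum_\beta c^\beta_{\lambda\nu'}s_\beta$. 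Since $\{s_\alpha(\vec{x})s_\beta(\vec{y})\}$ is linearly independent (with $\ell(\alpha)\le m$, $\ell(\beta)\le n$), the identity becomes a coefficient statement. For each pair $(\alpha,\beta)$ we need
\[
\sum_{\lambda,\nu}(-1)^{|\nu|}c^\alpha_{\lambda\nu}c^\beta_{\lambda\nu'}=
\begin{cases}(-1)^{|\mu|} & \text{if } (\alpha,\beta)=(\mu_{(k)},(\mu')_{(k)}),\ \mu\subseteq[m-k]\times[n-k],\\ 0&\text{otherwise.}\end{cases}
\]
We realize the left side as a signed set $\mathcal{T}_{\alpha,\beta}$. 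Its elements are triples $(\lambda,\nu,(T,U))$ with $T$ an LR tableau of shape $\alpha/\lambda$ and content $\nu$, and $U$ an LR tableau of shape $\beta/\lambda$ and content $\nu'$; the sign is $(-1)^{|\nu|}$. The theorem then reduces to building a sign-reversing involution on $\mathcal{T}_{\alpha,\beta}$ with at most one fixed point, of sign $(-1)^{|\mu|}$, and that only in the stabilized case.

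For $k=1$, $\lambda=(a)$ is a single row. By Pieri, $c^\alpha_{(a)\nu}\in\{0,1\}$, and it equals $1$ exactly when $\alpha/\nu$ is a horizontal strip of size $a$. Likewise $\beta/\nu'$ must be a horizontal strip, so $\beta'/\nu$ is a vertical strip of size $a$. Thus $\mathcal{T}_{\alpha,\beta}$ is the set of $\nu\subseteq\alpha\cap\beta'$ with $\alpha/\nu$ a horizontal strip, $\beta'/\nu$ a vertical strip, and $|\alpha|-|\nu|=|\beta|-|\nu|$, so $|\alpha|=|\beta|$ is forced. The involution toggles a single canonical cell. We take the lexicographically first corner cell $c$ of $\alpha\cap\beta'$ that can be added to or removed from $\nu$ while preserving both strip conditions, and we toggle it. The sign flips because $|\nu|$ changes by one, and $a$ changes by one in both factors consistently. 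We must check that the choice of $c$ is unchanged after toggling, which comes from choosing $c$ by a rule depending only on $\alpha,\beta$ and the cells of $\alpha\cap\beta'$ forced into or out of $\nu$. The fixed points are those $\nu$ entirely forced by the two strip conditions. We must then show such $\nu$ exist only when $\alpha=\mu_{(1)}$ and $\beta=(\mu')_{(1)}$, using Definition~\ref{def:stab}, in which case $|\nu|$ has the parity of $|\mu|$.

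For $k=\min\{m,n\}-1$, say $m\le n$, so $\ell(\lambda)\le m-1$. We plan to reduce to the same setup by complementation inside the $m\times n$ rectangle. Conjugation symmetry is available, since swapping $(\vec{x},m)\leftrightarrow(\vec{y},n)$ exchanges $\nu$ and $\nu'$. The identity can also be rewritten as the Cauchy sum minus the terms with $\ell(\lambda)=m$. Since $s_\lambda(\vec{x})$ with $\ell(\lambda)=m$ factors as $(x_1\cdots x_m)\,s_{\lambda-1^m}(\vec{x})$, the correction term is $x_1\cdots x_m$ times a full Cauchy sum in $\vec x$. The condition $\mu\subseteq[1]\times[n-m+1]$ forces $\mu$ to be a single row. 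The LR coefficients with $\nu\subseteq[m]\times[n]$ then again collapse to Pieri-type strips, and the analogous single-cell toggle applies.

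The main obstacle is making the toggle well defined. The canonical cell must depend only on data invariant under the involution, and we must prove that fixed points occur exactly at the stabilized pairs. We would first try choosing $c$ as the first cell, in reading order, of $(\alpha\cap\beta')\setminus(\text{forced cells})$, and then verify invariance by a case analysis on the strip conditions.
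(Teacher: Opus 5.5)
\textbf{There are genuine gaps in both cases.}

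\textbf{Case $k=1$.} Your route matches the paper's: dual Cauchy, Littlewood--Richardson, Pieri and dual Pieri, then toggling one cell of $\nu$. However, the two facts that make up the proof are exactly the ones you defer.

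First, your ``forced cells'' are $\nu_-^1=\ctrim_1(\alpha)\cup\rtrim_1(\beta')$, and the admissible $\nu$ are precisely the interval $[\nu_-^1,\nu_+]$ with $\nu_+=\alpha\cap\beta'$. The missing observation is Lemma~\ref{lem:nu corners}. A cell of $\nu_+$ outside $\nu_-^1$ has nothing below it in $\alpha$ and nothing to its right in $\beta'$, so it is a corner of $\nu_+$. Hence the cells of $\nu_+/\nu_-^1$ are pairwise incomparable, and the interval is a Boolean lattice on them. This is what lets one fixed, $\nu$-independent cell be toggled for every admissible $\nu$. Without it, your ``first toggleable cell'' could change after toggling; with it, no case analysis is needed.

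Second, you must prove that $\nu_-^1=\nu_+$ forces $\alpha=(\nu_+)_{(1)}$ and $\beta=(\nu_+')_{(1)}$, and conversely that stabilized pairs give exactly one element. This is the substantive step. The paper shows that every corner of $\nu_+$ is shadowed downward or rightward, with the downward-shadowed corners preceding the rightward ones (Lemma~\ref{lem:* if nu_- = nu_+}). It deduces that $\alpha/\nu_+$ is one cell in each of the first $j_u$ columns and $\beta'/\nu_+$ is one cell in each of the first $i_v$ rows. Only then does $|\alpha|=|\beta|$ give $j_u=i_v=r(\nu_+)$.

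\textbf{Case $k=\min\{m,n\}-1$.} Here your plan has no working mechanism.

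In your signed set $\mathcal{T}_{\alpha,\beta}$, $\lambda$ runs over all partitions with $\ell(\lambda)\le m-1$. So $c^\alpha_{\lambda\nu}$ does not collapse to any Pieri-type rule. The constraint $\mu\subseteq[1]\times[n-m+1]$ lives on the other side of the identity and never enters your set. The subtraction idea gives $x_1\cdots x_m\sum_\rho s_\rho(\vec x)s_{\rho+1^m}(\vec y)$, which is not a Cauchy product when $n>m$. The ``complementation'' idea is never specified.

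What works, and what the paper does, is to expand the right-hand side with the \emph{classical} Cauchy identity instead. This produces pairs in $\LR(\alpha/\nu,\mu_{(m-1)})\times\LR(\beta/\nu,(\mu')_{(m-1)})$ with sign $(-1)^{|\mu|}$. The target value is $1$ exactly when $\alpha=\beta$ and $\ell(\alpha)\le m-1$, and $0$ otherwise.

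Now $\mu=(a)$, so $\mu_{(m-1)}=(a,1^{m-1})$ is a hook, not a row or column, while $(\mu')_{(m-1)}$ is a column. You therefore need the hook Pieri rule (Theorem~\ref{thm:hook Pieri rule}). It forces $\ell(\alpha)=m$ and makes $\rtrim_1(\alpha)/\nu$ a horizontal strip. Together with dual Pieri for $\beta/\nu$, the admissible $\nu$ form the interval $[\nu_-^1(\rtrim_1(\alpha),\beta),\nu_+(\rtrim_1(\alpha),\beta)]$, and the same toggle applies.

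You still need two further pieces:
\begin{itemize}
\item that $\nu_-^1=\nu_+$ forces $\alpha=\beta$ (Proposition~\ref{prop:equal if nu_- = nu_+}, again via the shadowing lemmas plus a size count);
\item a separate treatment of $\alpha=\beta$ with $\ell(\alpha)=m$. There the set has exactly two elements, $(\mu,\nu)=(\emptyset,\alpha)$ and $((1),\rtrim_1(\alpha))$, which must be paired directly rather than by a toggle. Your ``at most one fixed point'' framework does not anticipate this.
\end{itemize}
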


The proof of Theorem~\ref{thm:main} works by associating a set of pairs of Littlewood--Richardson tableaux to each partition in intervals of Young's lattice depending on $k$. 
A sign-reversing involution on these tableau-pairs for a given $k$ then solves that instance of Problem~\ref{prob:main}. 
In the cases where $k=1$ or $k=\min\{m, n\}-1$, we show that the intervals of Young's lattice considered are particularly simple: they are isomorphic to \emph{subset lattices} on the boxes of appropriate skew shapes, and moreover there is a unique pair of Littlewood--Richardson tableaux associated to each partition in the interval. 
This allows us to give explicit sign-reversing involutions in these cases.

\subsection{History and motivation}
We briefly summarize the origins of Theorem~\ref{thm:gen. Cauchy} and motivate our search for an elementary combinatorial proof.
Let $U$ and $W$ be complex vector spaces of dimensions $m$ and $n$ respectively, and identify the tensor product $U\boxtimes W$ with the affine space $\Mat_{m, n}$ of complex $m\times n$ matrices. 
The \emph{coordinate ring} $\C[\Mat_{m, n}]$ is then the symmetric algebra $S((U\boxtimes W)^*)$, a polynomial ring in an $m\times n$ matrix $Z = [z_{ij}]$ of variables. 
This ring is a polynomial representation of $G = GL(U)\times GL(W)$ via the left action
\begin{equation}
    (g, h)\cdot f(Z) \coloneq f(gZh^t).
\end{equation}
The maximal torus $T\subseteq G$ of pairs of invertible diagonal matrices induces a $\Z^{m+n}$-multigrading on $\C[\Mat_{m, n}]$ in the sense of \cite[Definition 8.1]{Miller.Sturmfels}; explicitly, each variable $z_{ij}$ is assigned the monomial weight $x_iy_j$. 
The \emph{character} of $\C[\Mat_{m, n}]$ as a $G$-representation is exactly its \emph{Hilbert series} with respect to this multigrading. 
Computing this Hilbert series as the weight generating series for monomials in the variables $z_{ij}$ yields the right side of the Cauchy identity \eqref{eqn:classical Cauchy}. 
The left side of \eqref{eqn:classical Cauchy} then expresses the character of $\C[\Mat_{m, n}]$ in terms of irreducible $G$-characters (we call this expression the \emph{$G$-equivariant Hilbert series} of $\C[\Mat_{m, n}]$).
In this sense, the RSK correspondence previously mentioned is a weight-preserving bijection deriving the $G$-irreducible multiplicities of $\C[\Mat_{m, n}]$ combinatorially.

Theorem~\ref{thm:gen. Cauchy} has a geometric interpretation generalizing that of \eqref{eqn:classical Cauchy}. 
The only $G$-stable subvarieties of $\Mat_{m, n}$ are the \emph{classical determinantal varieties}
\[\mathfrak{X}_k = \{M\in\Mat_{m, n}:\mathrm{rank}(M)\leq k\}.\]
The left side of \eqref{eqn:gen. Cauchy} is precisely the $G$-equivariant Hilbert series of the coordinate ring $\C[\mathfrak{X}_k]$, first computed by P.~Doubilet--G.-C.~Rota--J.~Stein in their work \cite{DRS} proving the first fundamental theorem of invariant theory over arbitrary commutative rings. 

Now, by the classical \emph{Hilbert--Serre theorem} (see, e.g., \cite[Theorem 8.20]{Miller.Sturmfels}), for any $T$-stable embedded subvariety $\mathfrak{X}\subseteq\Mat_{m, n}$, there exists an integer polynomial $K_\mathfrak{X}(\vec{x},\vec{y})$ such that
\[HS_\mathfrak{X}(\vec{x},\vec{y}) = \frac{K_\mathfrak{X}(\vec{x},\vec{y})}{\prod_{i, j}(1-x_iy_j)}.\footnote{The result in \cite{Miller.Sturmfels} applies in a more general context where $K_\mathfrak{X}(\vec{x},\vec{y})$ may be a Laurent polynomial, but our explicit choice of group action on ${\sf Mat}_{m, n}$ ensures that the $K$-polynomial is an honest polynomial here.}\]
This \emph{$K$-polynomial} $K_\mathfrak{X}(\vec{x},\vec{y})$ is the Euler characteristic of the minimal free resolution of $\C[\mathfrak{X}]$. 
A.~Lascoux \cite{Lascoux} constructed $G$-equivariant minimal free resolutions for each classical determinantal variety $\mathfrak{X}_k$, from which one can read off their $G$-equivariant $K$-polynomials and derive the right side of \eqref{eqn:gen. Cauchy}. 
In particular, when $k = 0$, Lascoux's construction specializes to the \emph{Koszul complex} resolving the base field $\C\subset\C[\Mat_{m, n}]$ as a $\C[\Mat_{m, n}]$-module, from which one obtains the dual Cauchy identity \eqref{eqn:dual Cauchy}. 
For a detailed exposition of these results, see J.~Weyman's book \cite[Section 6.1]{Weyman}.

Our motivation for considering Problem~\ref{prob:main} stems from the second author's work with A.~Price and A.~Yong on equivariant homological invariants of \emph{matrix Schubert varieties} \cite{AAA1}. 
In \cite[Appendix A]{AAA1}, it was noted that combinatorial understanding of the equivariant Hilbert series of these varieties is more developed than that of their equivariant $K$-polynomials. 
In particular, Lascoux's methods have not been used successfully to compute equivariant $K$-polynomials of matrix Schubert varieties, although their equivariant Hilbert series are computed by \cite[Theorems~1.11 and 1.14]{AAA1}. 
By exploring combinatorial proofs for Theorem~\ref{thm:gen. Cauchy}, we aim to discover methods that may apply to matrix Schubert varieties and other determinantal varieties for which the minimal free resolutions remain unknown. 

\subsection{Organization}

The organization of this paper is as follows. Section~\ref{sec:definitions} gathers standard definitions and results used in our arguments. Section~\ref{sec:tech} contains proofs of some technical lemmas that we frequently refer back to in the proof of Theorem~\ref{thm:main}.

In Section~\ref{sec:dual Cauchy}, we use the dual Cauchy identity \eqref{eqn:dual Cauchy} and the Littlewood--Richardson rule (Theorem \ref{thm:LR rule}) to convert the algebraic statement of \eqref{eqn:gen. Cauchy} into the two-part combinatorial Problem~\ref{prob:dual}. We solve the first part of this problem for all $k$ in Theorem~\ref{thm:dual coeff. = 1 if stab.} and the second part when $k=1$ in Theorem~\ref{thm:k = 1 solution}, proving the $k=1$ case of Theorem~\ref{thm:main}. 

Similarly, in Section~\ref{sec:classical Cauchy}, we use the Cauchy identity \eqref{eqn:classical Cauchy} and the Littlewood--Richardson rule to convert the algebraic statement of \eqref{eqn:gen. Cauchy} into the two-part combinatorial Problem~\ref{prob:classical}. We solve the first part of this problem for all $k$ in Proposition~\ref{prop:classical coeffs. = 1 if equal} and the second part when $k=\min\{m, n\}-1$ in Theorem~\ref{thm:classical involution}, proving the $k=\min\{m, n\}-1$ case of Theorem~\ref{thm:main}. 

The concluding Section~\ref{sec:further} provides commentary on the general case of Problem~\ref{prob:main}.

\section{Basic definitions}
\label{sec:definitions}

\subsection{Partitions and tableaux}
\label{subsec:partitions}
\begin{definition}
    A \emph{partition} $\lambda$ of size $n\in\Z_{\geq 0}$ (denoted $\lambda \vdash n$) is a tuple of nonnegative integers $(\lambda_1, \ldots, \lambda_k)$ such that $\lambda_1 \geq \cdots \geq \lambda_k$ and $\lambda_1 + \cdots + \lambda_k = n$.
\end{definition}
We identify a partition $\lambda$ with its \emph{Young diagram} in English notation: a table of boxes in which the $i$-th row from the top contains $\lambda_i$ boxes. The \emph{length} of $\lambda$, denoted $\ell(\lambda)$, is the number of (nonzero) rows in its diagram. The diagram of the \emph{conjugate} partition $\lambda'$ is obtained by transposing the rows and columns of $\lambda$'s diagram. 
We refer to boxes in Young diagrams with matrix coordinates, so the box in row $i$ and column $j$ of $\lambda$ has coordinates $(i, j)$.
\begin{definition}
    Let $\lambda$ be a partition and let $\mathsf b$ be a box in its Young diagram.
    \begin{itemize}
        \item
            We call $\mathsf b$ an \emph{edge box} if it does not have both a box below it and a box to its right.
        \item
            We call $\mathsf b$ a \emph{corner box} if it has neither a box below it nor a box to its right.
            (All corner boxes are also edge boxes.)
    \end{itemize}
\end{definition}
\begin{example}
    The partition $\lambda = (4, 2, 1) \vdash 7$ has Young diagram
    \[
        \begin{yctableau}
            \ & \ & \red{\circ} & \ \\
            \ & \blue{\bullet} \\
            \
        \end{yctableau}.
    \]
    The box $(1, 3)$ marked with a red circle is an edge box, whereas the box $(2, 2)$ marked with a blue bullet is a corner box. The conjugate partition is $\lambda' = (3, 2, 1, 1)$. 
\end{example}
\begin{definition}
    The \emph{rank} of a partition $\lambda$, denoted $r(\lambda)$, is the greatest integer $k$ such that $\lambda_k \geq k$. Equivalently, $r(\lambda)$ is the side length of the largest square (known as the \emph{Durfee square}) contained in the diagram of $\lambda$.
\end{definition}
The following operation on partitions appears in the statement of \eqref{eqn:gen. Cauchy}.
\begin{definition}\label{def:stab}
    The \emph{$k$-stabilization} $\lambda_{(k)}$ of a partition $\lambda$ is the partition obtained by adding $k$ rows of length $r(\lambda)$ to $\lambda$.
        For example, the 1-stabilization of $\lambda = (4, 3, 1)$ is $\lambda_{(1)} = (4, 3, 2, 1)$:
        \[
            \newcommand{\dmark}{\green{\star}}
            \newcommand{\smark}{\red{\circ}}
            \lambda = \begin{yctableau}
                \dmark & \dmark & \ & \ \\
                \dmark & \dmark & \ \\
                \
            \end{yctableau}
            \quad\Rightarrow\quad
            \lambda_{(1)} = \begin{yctableau}
                \dmark & \dmark & \ & \ \\
                \dmark & \dmark & \ \\
                \smark & \smark \\
                \
            \end{yctableau},
            ~\lambda_{(2)} = \begin{yctableau}
                \dmark & \dmark & \ & \ \\
                \dmark & \dmark & \ \\
                \smark & \smark \\
                \smark & \smark \\
                \
            \end{yctableau},
            ~\cdots.
        \]
        The Durfee square of $\lambda$ is marked with green stars, and the rows added by stabilization are marked with red circles.
\end{definition}

Given partitions $\lambda$ and $\mu$, we say that $\lambda$ \emph{contains} $\mu$ ($\lambda\supseteq\mu$) if the diagram for $\lambda$ contains the diagram for $\mu$, that is, if $\ell(\lambda)\geq\ell(\mu)$ and $\lambda_i\geq\mu_i$ for each $i\in[\ell(\mu)]$. 
This partial order defines \emph{Young's lattice} on the set of all partitions. 
Meets are computed by intersecting diagrams (i.e., $(\alpha\cap\beta)_i = \min\{\alpha_i,\beta_i\}$), and joins by taking unions (i.e., $(\alpha\cup\beta)_i = \max\{\alpha_i,\beta_i\}$).
Most of our arguments concern \emph{tableaux}, fillings of \emph{skew} Young diagrams.
\begin{definition}
    A \emph{skew shape} $\lambda / \mu$ is a pair of partitions $\lambda, \mu$ such that $\lambda \supseteq \mu$.
    A box $\mathsf b$ is contained in the skew shape $\lambda / \mu$ if and only if $\mathsf b \in \lambda$ and $\mathsf b \notin \mu$.
\end{definition}
\begin{example}
    Let $\lambda = (4, 2, 1), \mu = (2)$.
    Then the skew shape $\lambda / \mu$ has Young diagram
    \[
        \begin{yctableau}
            \none & \none & \ & \ \\
            \ & \ \\
            \
        \end{yctableau}.
    \]
\end{example}
We identify each partition $\lambda$ with the skew shape $\lambda / \emptyset$, where $\emptyset$ is the empty partition.
\begin{definition}
    Let $\lambda / \mu$ be a skew shape.
    A \emph{tableau} $T$ of shape $\lambda / \mu$ is a labeling of each box in $\lambda / \mu$ with a positive integer.
\end{definition}
\begin{definition}
    Let $T$ be a tableau and let $n$ be the largest integer appearing as a label in $T$.
    The \emph{content} of $T$ is the tuple $c(T) = (c_1, \ldots, c_n)$, where $c_i$ is the number of boxes labeled $i$ in $T$.
    The \emph{weight} $\vec x^T$ of $T$ is the monomial in $x_1, \ldots, x_n$ whose exponent vector is $c(T)$.
\end{definition}
\begin{definition}
    A tableau $T$ is \emph{semistandard} if its labels are weakly increasing along rows from left to right, and strictly increasing along columns from top to bottom.
    Let $\SSYT(\lambda, n)$ denote the set of all semistandard tableaux of shape $\lambda$ with labels from $[n]$.
\end{definition}
\begin{example}
    The tableau
    \[
        T = \begin{yctableau}
            1 & 1 & 2 & 3 \\
            2 & 4 \\
            4
        \end{yctableau}
    \]
    is semistandard of shape $\lambda = (4, 2, 1)$, content $(2, 2, 1, 2)$, and weight $x_1^2x_2^2x_3x_4^2$.
\end{example}
\begin{definition}
    Let $\lambda$ be a partition and let $\vec x = (x_1, \ldots, x_n)$ be a set of variables.
    The \emph{Schur polynomial} $s_\lambda(\vec x)$ is the weight generating series
    \[
        s_\lambda(\vec x) = \sum_{T \in \SSYT(\lambda, n)} \vec x^T.
    \]
\end{definition}
\begin{remark}
    \label{rmk:Schur vanishing}
    Semistandard Young tableaux are strictly increasing along columns, and so the label of the first box in row $i$ of a tableau must be at least $i$.
    Therefore, the Schur polynomial $s_\lambda(x_1, \ldots, x_n)$ in $n$ variables vanishes if $\ell(\lambda) > n$.
\end{remark}

\subsection{The Littlewood--Richardson rule}
\label{subsec:LR rule}
The \emph{Littlewood--Richardson rule} expands products of Schur polynomials in the linear basis of Schur polynomials combinatorially.
\begin{definition}
    A word $w$ on the alphabet $[n]$ is \emph{lattice} if for all $i\in[n-1]$, each initial segment of $w$ contains at least as many $i$'s as $(i+1)$'s.
\end{definition}
\begin{definition}
    Let $T$ be a tableau. The \emph{reverse reading word} $\revword(T)$ is formed by reading the entries of $T$ along the rows from right to left and from top to bottom.
    $T$ is a \emph{Littlewood--Richardson tableau} if it is semistandard and $\revword(T)$ is lattice.
\end{definition}
\begin{example}
    Let $\nu = (4, 3, 3, 1)$, $\lambda = (2, 1)$, and $\mu = (3, 3, 2)$. Consider the tableaux
    \[
        T_1 = \begin{yctableau}
            \none & \none & 1 & 1 \\
            \none & 1 & 2 \\
            2 & 2 & 3 \\
            3
        \end{yctableau}
        \quad\text{and}\quad
        T_2 = \begin{yctableau}
            \none & \none & 1 & 2 \\
            \none & 1 & 2 \\
            1 & 3 & 3 \\
            2
        \end{yctableau}.
    \]
    Both are semistandard tableaux of shape $\nu / \lambda$ and content $\mu$.
    However, $\revword(T_2) = 21213312$, which fails the lattice condition for $i = 1$, so only $T_1$ is Littlewood--Richardson.
\end{example}
\begin{definition}
    Let $\LR(\nu / \lambda, \mu)$ denote the set of all Littlewood--Richardson tableaux of shape $\nu / \lambda$ and content $\mu$.
    The \emph{Littlewood--Richardson coefficient} $c^\nu_{\lambda, \mu}$ enumerates ${\sf LR}(\nu/\lambda,\mu)$:
    \[
        c^\nu_{\lambda, \mu} = |\LR(\nu / \lambda, \mu)|.
    \]
\end{definition}
\begin{theorem}[{Littlewood--Richardson rule, \cite[Theorem A1.3.3]{ECII}}]
    \label{thm:LR rule}
    Let $\vec{x} = (x_1,\dots, x_n)$ and let $\lambda$ and $\mu$ be two partitions.
    Then
    \begin{equation}
        \label{eqn:LR rule}
        s_\lambda(\vec x)s_\mu(\vec x) = \sum_{\nu} c_{\lambda, \mu}^{\nu} s_\nu(\vec x).
    \end{equation}
\end{theorem}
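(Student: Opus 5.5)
We would prove Theorem~\ref{thm:LR rule} by the alternant method with a sign-reversing involution, following Stembridge's concise argument. Set $\delta = (n-1, n-2, \ldots, 0)$ and define the alternant $a_\alpha = \det\bigl(x_i^{\alpha_j}\bigr)_{i,j\in[n]}$ for $\alpha\in\Z_{\geq 0}^n$. We take as known the bialternant formula $s_\lambda(\vec x) = a_{\lambda+\delta}/a_\delta$; it follows from the tableau definition via the Bender--Knuth involutions, which show that $s_\lambda$ is symmetric.

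Since the product $s_\lambda s_\mu$ is commutative, it suffices to show
\[
s_\mu(\vec x)\,a_{\lambda+\delta} = \sum_\nu c^\nu_{\lambda,\mu}\,a_{\nu+\delta}.
\]
For this we expand the left side. Because $s_\mu$ is symmetric, the standard manipulation gives
\[
s_\mu\, a_{\lambda+\delta} = \sum_{T\in\SSYT(\mu,n)} a_{\lambda+\delta+c(T)}.
\]
By antisymmetry, any term whose index has a repeated entry vanishes.

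The heart of the proof is an involution on the tableaux $T$. Read $T$ by its reverse reading word $w = w_1w_2\cdots w_N$. Call $T$ \emph{$\lambda$-good} if, for every prefix $w_1\cdots w_j$, the vector $\lambda + c(w_1\cdots w_j)$ is a partition; this is the lattice condition twisted by $\lambda$. For a bad $T$, take the least $j$ at which the condition fails. Let $i$ be the index with
\[
\lambda_i + c_i(w_1\cdots w_j) < \lambda_{i+1} + c_{i+1}(w_1\cdots w_j).
\]
Then apply the Bender--Knuth-type swap of the letters $i$ and $i+1$, restricted to the entries of $T$ read after position $j$ (the free $i$/$(i+1)$ entries, in the usual row-by-row sense).

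We must check three things about this swap. First, it returns a semistandard tableau. Second, it preserves the minimal failing pair $(j,i)$, so it is an involution. Third, it sends the index $\lambda+\delta+c(T)$ to its image under the transposition $s_i$, so that the two terms $a_{\lambda+\delta+c(T)}$ cancel with opposite signs. This is the main obstacle. The failure occurs exactly when the prefix count of $i+1$ exceeds that of $i$ by one relative to $\lambda$. Verifying that swapping the remaining free $i$'s and $(i+1)$'s produces precisely $s_i(\lambda+\delta+c(T))$, while keeping the prefix $w_1\cdots w_j$ unchanged, requires careful bookkeeping. Following Stembridge, we would handle this with the column-by-column reading and the pairing of $i$'s with $(i+1)$'s.

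The surviving good $T$ have $\lambda+c(T)=\nu$ a partition, and they contribute $a_{\nu+\delta}$. It remains to biject the good tableaux of shape $\mu$ and content $\nu-\lambda$ with $\LR(\nu/\lambda,\mu)$. Given a good $T$, place the letter $r$ in the box of $\nu/\lambda$ that the letter $w_j$ adds at step $j$, where $r$ is the row of $T$ containing $w_j$. The $\lambda$-goodness of $T$ ensures that the resulting filling is semistandard of shape $\nu/\lambda$. The semistandardness of $T$ ensures that its reverse reading word is lattice with content $\mu$. The inverse map reads an LR tableau back in the same way. Dividing by $a_\delta$ then yields \eqref{eqn:LR rule}.
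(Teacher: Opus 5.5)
The paper gives no proof of this statement. It is quoted from the cited reference and used as a black box, so your alternant-and-involution argument is a genuinely different, self-contained route. The framework is sound: the expansion $s_\mu a_{\lambda+\delta}=\sum_T a_{\lambda+\delta+c(T)}$, the cancellation mechanism, and the closing bijection $T\mapsto U$ all work. In that bijection, row $a$ of $U$ is $1^{t_{1a}}2^{t_{2a}}\cdots$, where $t_{ra}$ is the number of $a$'s in row $r$ of $T$, and column-strictness of $T$ is exactly the lattice condition on $U$.

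Two small corrections:
\begin{itemize}
\item Symmetry of $s_\mu$ alone does not give the bialternant formula. You need not assume it, though: the case $\lambda=\emptyset$ of your own involution yields $s_\mu a_\delta=a_{\mu+\delta}$, because the only $\emptyset$-good tableau of shape $\mu$ has every entry of row $r$ equal to $r$.
\item The case $\ell(\lambda)>n$, where both sides vanish, should be set aside separately.
\end{itemize}

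The real gap is the step you yourself call the main obstacle. In your row-reading set-up it is more than bookkeeping. Let $w_j=i+1$ sit in box $(r_0,c_0)$. The unread region $S$ is the part of row $r_0$ left of $c_0$ together with all lower rows. It is not a union of columns, so a priori an $i+1$ in row $r_0$ left of $c_0$ could lie directly below an already-read $i$. If freeness is computed in $T$, that $i+1$ is paired with an entry outside $S$, and the swap does not exchange the numbers of $i$'s and $(i+1)$'s in $S$. If freeness is computed in $S$, changing that $i+1$ to $i$ can break column-strictness. Either way, $\lambda+\delta+c(T^*)=\sigma_i(\lambda+\delta+c(T))$ is not guaranteed unless this configuration is excluded.

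The missing lemma is $T(r_0-1,c_0)\neq i$. Suppose otherwise, and let $e$ be the number of $(i+1)$'s right of $c_0$ in row $r_0$. Then the boxes $(r_0-1,c_0),\dots,(r_0-1,c_0+e)$ all contain $i$. So the stretch of the word from box $(r_0-1,c_0+e)$ through $w_{j-1}$ has at least $e+1$ letters $i$ and exactly $e$ letters $i+1$. The prefix before this stretch is good, so $\lambda_i+c_i(w_1\cdots w_{j-1})>\lambda_{i+1}+c_{i+1}(w_1\cdots w_{j-1})$, contradicting failure at $j$.

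The lemma then gives three consequences:
\begin{itemize}
\item By row monotonicity, no $i$ lies above any $i+1$ of row $r_0$ left of $c_0$.
\item Entries of row $r_0+1$ in columns $\ge c_0$ are at least $i+2$.
\item The read part of row $r_0$ contains no $i$.
\end{itemize}
Hence every vertical $i/(i+1)$ pair meeting $S$ lies inside $S$. So Bender--Knuth on $S$ is a content-swapping involution that preserves semistandardness and fixes $w_1\cdots w_j$. Since $\gamma=\lambda+\delta+c(w_1\cdots w_j)$ has $\gamma_i=\gamma_{i+1}$, the sign reversal follows.

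Your fallback of following Stembridge's column reading does avoid this issue: the unread part is then a union of whole columns, and the failing column contains $i+1$ but no $i$. But goodness is then defined by column suffixes, not by the reverse reading word that your final bijection uses. You must therefore add that the two notions agree for semistandard $T$. This is true: restricted to the letters $i,i+1$, the free letters occur in the same order in both words. Moreover, the vertical $i$-over-$(i+1)$ pairs do not change the maximal prefix excess of $(i+1)$'s in either word.
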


\subsection{The Pieri rules}
\label{subsec:Pieri rules}
The \emph{Pieri rule} and its relatives are special cases of the Littlewood--Richardson rule where all coefficients that appear are either zero or one.
\begin{definition}
    A \emph{row} (resp. \emph{column}) \emph{partition} is a partition with at most one row (resp. one column).
\end{definition}
\begin{definition}[Horizontal strip]
    A \emph{horizontal} (resp. \emph{vertical}) \emph{strip} is a skew shape with at most one box in each column (resp. each row).
\end{definition}
\begin{theorem}[{Pieri rule, \cite[Equation 2.2(4)]{Fulton}}]
    \label{thm:Pieri rule}
    Let $\nu / \lambda$ be a skew shape and let $(s)$ denote the row partition of size $s$.
    Then
    \[
        c^\nu_{\lambda, (s)} = \begin{cases}
            1 & \text{if $\nu / \lambda$ is a horizontal strip of size $s$,} \\
            0 & \text{otherwise}.
        \end{cases}
    \]
\end{theorem}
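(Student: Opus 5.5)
The plan is to read off the coefficient directly from the definition $c^\nu_{\lambda,(s)} = |\LR(\nu/\lambda,(s))|$ used in the Littlewood--Richardson rule (Theorem~\ref{thm:LR rule}). That is, we show that $\LR(\nu/\lambda,(s))$ contains exactly one tableau when $\nu/\lambda$ is a horizontal strip of size $s$, and is empty otherwise.

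First I would look at what the content condition forces. The content $(s)$ means every box of $\nu/\lambda$ is labeled $1$, and there are exactly $s$ boxes. So $\LR(\nu/\lambda,(s))$ is empty unless $|\nu/\lambda| = s$. When the sizes agree, there is exactly one tableau of shape $\nu/\lambda$ and content $(s)$, namely the all-$1$ filling. It remains to decide when this filling $T$ is a Littlewood--Richardson tableau.

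The lattice condition is automatic: $\revword(T) = 11\cdots 1$ contains no letter $i+1$ for any $i\geq 1$, so every initial segment trivially satisfies it. Weak increase along rows is also automatic, since all labels are equal. The only remaining requirement is strict increase down columns. Two boxes of $T$ in the same column would both carry the label $1$, which violates strictness. Conversely, if no column contains two boxes, the column condition is vacuous. Hence $T$ is semistandard exactly when $\nu/\lambda$ has at most one box in each column, that is, exactly when $\nu/\lambda$ is a horizontal strip.

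Combining these steps, $c^\nu_{\lambda,(s)} = 1$ precisely when $\nu/\lambda$ is a horizontal strip of size $s$, and $c^\nu_{\lambda,(s)} = 0$ otherwise. I expect no real obstacle. The only point needing care is the column-strictness observation: in a skew shape, two boxes in the same column of $\nu/\lambda$ need not be adjacent, so the argument should invoke column strictness along the whole column rather than only for vertically adjacent boxes. This causes no difficulty, since the boxes of a skew shape in a single column are contiguous.
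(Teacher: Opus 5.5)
Your argument is correct. It differs from the paper only in that the paper gives no proof of this statement and simply cites Fulton. There the Pieri formula is proved as an identity of Schur functions, $s_\lambda s_{(s)}=\sum s_\nu$ over horizontal strips $\nu/\lambda$ of size $s$. The proof uses row insertion and the row bumping lemma, before and independently of the Littlewood--Richardson rule.

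You instead verify the statement directly from the paper's combinatorial definition of $c^\nu_{\lambda,\mu}$ as the number of Littlewood--Richardson tableaux of shape $\nu/\lambda$ and content $\mu$:
\begin{itemize}
\item content $(s)$ forces the all-ones filling;
\item the lattice and row conditions then hold trivially;
\item column strictness is exactly the horizontal-strip condition.
\end{itemize}
This is the same style of argument the paper uses for the hook Pieri rule (Theorem~\ref{thm:hook Pieri rule}).

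For the statement as formulated, your route is the more economical one. Transferring Fulton's product formula to the combinatorially defined coefficient implicitly requires Theorem~\ref{thm:LR rule} plus linear independence of Schur functions, and you need neither. Fulton's route, in exchange, gives the product formula without presupposing the Littlewood--Richardson rule.

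Your closing caveat about non-adjacent boxes in a column is harmless but unnecessary. The paper's definition of semistandard already requires strict increase along entire columns, so no contiguity argument is needed.
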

\begin{theorem}[{Dual Pieri rule, \cite[Equation 2.2(5)]{Fulton}}]
    \label{thm:dual Pieri rule}
    Let $\nu / \lambda$ be a skew shape and let $(1^s)$ denote the column partition of size $s$.
    Then
    \[
        c^\nu_{\lambda, (1^s)} = \begin{cases}
            1 & \text{if $\nu / \lambda$ is a vertical strip of size $s$}, \\
            0 & \text{otherwise}.
        \end{cases}
    \]
\end{theorem}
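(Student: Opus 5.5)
The plan is to deduce the statement directly from the Littlewood--Richardson rule (Theorem~\ref{thm:LR rule}), using the fact that $c^\nu_{\lambda,(1^s)} = |\LR(\nu/\lambda,(1^s))|$. We will show that $\LR(\nu/\lambda,(1^s))$ is empty unless $\nu/\lambda$ is a vertical strip of size $s$, and that in that case it has exactly one element. The whole argument is a direct analysis of what the lattice condition forces when the content is $(1^s)$.

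First, suppose $T\in\LR(\nu/\lambda,(1^s))$.

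\begin{itemize}
    \item Since the content is $(1^s)$, the shape $\nu/\lambda$ has exactly $s$ boxes, and $T$ uses each label $1,\dots,s$ exactly once.
    \item The lattice condition on $\revword(T)$ now forces the word to be exactly $12\cdots s$. Indeed, label $i+1$ cannot appear before label $i$ in any initial segment, so the $j$-th letter read must be $j$.
    \item Suppose some row of $\nu/\lambda$ contained two boxes, with labels $a$ (left) and $b$ (right). The reverse reading word reads $b$ before $a$, so $b<a$.
    \item Semistandardness requires $a\le b$ along the row, and since labels are distinct this means $a<b$. This contradicts $b<a$.
\end{itemize}

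Hence every row of $\nu/\lambda$ contains at most one box, so $\nu/\lambda$ is a vertical strip of size $s$.

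Conversely, suppose $\nu/\lambda$ is a vertical strip of size $s$. Every row has at most one box, so the reverse reading order is simply top-to-bottom order of the boxes. The computation above shows that any LR tableau must therefore label the boxes $1,2,\dots,s$ from top to bottom, which gives uniqueness. It remains to check that this filling actually lies in $\LR(\nu/\lambda,(1^s))$.

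\begin{itemize}
    \item Rows are trivially weakly increasing, since each row has at most one box.
    \item Within a column, a lower box lies in a lower row, so it receives a larger label. Columns are therefore strictly increasing.
    \item The reverse reading word is $12\cdots s$, which is lattice.
\end{itemize}

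So $c^\nu_{\lambda,(1^s)}=1$ in this case, and $c^\nu_{\lambda,(1^s)}=0$ otherwise.

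There is no real obstacle here. The only step requiring care is observing that the lattice condition with content $(1^s)$ pins down the reading word completely. After that, both the vertical-strip conclusion and the uniqueness follow immediately from comparing the reading order with the row condition for semistandardness.
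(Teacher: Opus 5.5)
Your proof is correct. The paper gives no argument of its own: it cites Fulton, where the dual Pieri rule is proved as the Schur polynomial identity $s_\lambda e_s=\sum_\nu s_\nu$, summed over $\nu$ with $\nu/\lambda$ a vertical strip of size $s$. Fulton gets this from the row bumping lemma: row-inserting a strictly decreasing word into a tableau of shape $\lambda$ adds a vertical strip, and reverse bumping inverts this. The paper \emph{defines} $c^\nu_{\lambda,\mu}$ as $|\LR(\nu/\lambda,\mu)|$. So to get the statement in the paper's form from Fulton's identity, one must pass through Theorem~\ref{thm:LR rule} and the linear independence of Schur polynomials.

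Your route works directly with the paper's definition and is self-contained. In fact it does not need Theorem~\ref{thm:LR rule} at all, despite your framing, since $c^\nu_{\lambda,(1^s)}=|\LR(\nu/\lambda,(1^s))|$ holds by definition. It also matches the style of the paper's own proof of the hook Pieri rule (Theorem~\ref{thm:hook Pieri rule}). Your first half is Lemma~\ref{lem:LR distinct rows} specialized to content $(1^s)$: every label occurs exactly once, so the labels $1,\dots,s$ sit in strictly descending rows. That forces both the vertical-strip condition and the unique filling, and you could shorten your argument by invoking that lemma.

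What the insertion route buys in exchange is independence from the LR rule. That matters if one wants the Pieri rules as input toward proving the LR rule, rather than as a consequence of it.
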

The \emph{hook Pieri rule} below is known but non-standard, so we provide a full proof.
\begin{definition}
    \label{def:hook partition}
    The \textit{hook partition} with width $w \in \Z_{> 0}$ and height $h \in \Z_{> 0}$ is the partition 
    \[H(w, h) \coloneq (w, \underbrace{1, \ldots, 1}_{h-1}) = (w) \cup (1^h).\]
\end{definition}
\begin{definition}
    \label{def:trimming}
    Let $\lambda$ be a partition.
    Then the \emph{row trimming} $\rtrim_k(\lambda)$ is the partition obtained by removing the rightmost  $k$ boxes from each row of $\lambda$.
    Similarly, the \emph{column trimming} $\ctrim_k(\lambda)$ is obtained by removing the bottommost $k$ boxes from each column of $\lambda$.
\end{definition}
\begin{lemma}
    \label{lem:LR distinct rows}
    Let $T$ be a Littlewood--Richardson tableau containing at least two distinct labels $i < j$.
    Then the occurrence of $i$ first in $\revword(T)$ must come from a row strictly above the row containing the occurrence of $j$ first in $\revword(T)$.
\end{lemma}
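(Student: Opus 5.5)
We read the statement as follows. Let $a$ be the row containing the first occurrence of $i$ in $\revword(T)$, and $b$ the row containing the first occurrence of $j$. The claim is that $a<b$. The plan is to combine the lattice condition, which controls the order in which labels first appear in the reading word, with semistandardness, which controls how labels sit within a single row. First we reduce to consecutive labels, then we exclude $a=b$ and $a>b$.

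\emph{Reduction to consecutive labels.} We first show that every label $t$ with $i\le t\le j$ actually appears in $T$ and satisfies the claim. By the lattice condition, any initial segment of $\revword(T)$ ending at the first $j$ contains at least one $j-1$, hence at least one $j-2$, and so on down to $i$. In particular each of $i,i+1,\dots,j$ occurs in $T$. Moreover, the first occurrences of these labels appear in $\revword(T)$ in the order $i,i+1,\dots,j$. Since the row index is weakly increasing along $\revword(T)$, the rows of these first occurrences are weakly increasing in the label. It therefore suffices to prove the strict inequality for consecutive labels $t$ and $t+1$; the general statement then follows by chaining.

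\emph{Excluding $a=b$ for consecutive labels.} Suppose, with $j=i+1$, that the first $i$ and the first $j$ in $\revword(T)$ lie in the same row $a$. That row is read right to left, and the row is weakly increasing from left to right by semistandardness. So within row $a$ every $j$ is read before every $i$. Hence the first $j$ in this row precedes the first $i$ in $\revword(T)$. No $i$ occurs in an earlier row, since the first $i$ is in row $a$. The initial segment ending at that $j$ therefore contains one $j$ and no $i$, which contradicts the lattice condition.

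\emph{Excluding $a>b$.} This case is impossible because the first $i$ precedes the first $j$ in $\revword(T)$ and rows are read top to bottom, so $a\le b$. Thus $a<b$.

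The only step with real content is the same-row case. Its key point is that right-to-left reading combined with weakly increasing rows forces larger labels to be read first within a row. The remaining steps are bookkeeping about the order of the reverse reading word.
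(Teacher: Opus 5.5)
Your proof is correct and uses the same two ingredients as the paper's argument. The lattice condition forces the first $i$ to precede the first $j$ in $\revword(T)$, which rules out a lower row, and weakly increasing rows read right to left force every $j$ in a row to be read before every $i$ in that row, which rules out the same row. Your reduction to consecutive labels is harmless but unnecessary: chaining the lattice inequalities already shows that the first $i$ precedes the first $j$ for arbitrary $i<j$, and that is all the paper uses.
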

\begin{proof}
    The condition that $\revword(T)$ is lattice implies that the first occurrence of $i$ is neither below nor directly to the left of the first occurrence of $j$, and the condition that the rows of $T$ are weakly increasing implies that no occurrence of $i$ is directly right of any occurrence of $j$.
    Therefore, the first occurrence of $i$ must be above the first occurrence of $j$.
\end{proof}
\begin{theorem}[Hook Pieri rule]
    \label{thm:hook Pieri rule}
    Let $w, h \in \Z_{> 0}$.
    Let $\nu / \lambda$ be a skew shape such that $\ell(\nu) \leq h$.
    Then
    \[
        c^\nu_{\lambda, H(w, h)} = \begin{cases}
            1 & \text{if $\ell(\nu) = h$, $\lambda \subseteq \rtrim_1(\nu)$, and $\rtrim_1(\nu) / \lambda$ is a horizontal strip}, \\
            0 & \text{otherwise}.
        \end{cases}
    \]
\end{theorem}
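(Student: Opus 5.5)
The plan is to analyze $\LR(\nu/\lambda, H(w,h))$ directly. I will show that the lattice condition, together with $\ell(\nu)\le h$, forces the positions of all labels. Then at most one LR tableau exists, and it exists exactly under the stated conditions. Throughout I take $|\nu| = |\lambda| + w + h - 1$, since otherwise the coefficient is trivially $0$. I expect the statement intends this size condition implicitly, and the proof should say so.

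\textbf{Step 1: forcing the rows.} A tableau $T$ of content $H(w,h)$ has $w$ labels equal to $1$ and exactly one label $i$ for each $2\le i\le h$. Apply Lemma~\ref{lem:LR distinct rows} successively to the pairs $(1,2),(2,3),\dots,(h-1,h)$. It gives that the first $1$ in $\revword(T)$ lies strictly above the $2$, which lies strictly above the $3$, and so on. These are $h$ strictly descending rows, and $\nu$ has at most $h$ rows. Hence the first $1$ is in row $1$, the label $i$ lies in row $i$ for each $i\ge 2$, and $\ell(\nu)=h$.

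\textbf{Step 2: forcing positions within rows.} Row $1$ contains only $1$'s. For $i\ge 2$, row $i$ contains only $1$'s and the single label $i$. Rows weakly increase, so the $i$ sits in the last box $(i,\nu_i)$. Therefore $T$ is determined by $(\nu,\lambda)$, so $c^\nu_{\lambda,H(w,h)}\le 1$. Moreover, it exists iff the following filling is semistandard: $1$'s everywhere in $\nu/\lambda$ except label $i$ in box $(i,\nu_i)$ for $2\le i\le h$.

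\textbf{Step 3: matching with the condition.} The filling needs the following.
\begin{itemize}
\item Each row $i\ge 2$ has at least one box, i.e.\ $\lambda_i\le \nu_i-1$.
\item Row $1$ has at least one $1$, i.e.\ $\lambda_1\le\nu_1-1$.
\end{itemize}
Together these say exactly $\lambda\subseteq\rtrim_1(\nu)$.

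The boxes labeled $1$ must have no two in a column. Since $1$'s occupy row $1$ up to column $\nu_1$ and rows $i\ge2$ up to column $\nu_i-1$, this is the condition $\lambda_{i-1}\ge \nu_i-1$ for $i\ge 2$. Combined with the previous condition, it says exactly that $\rtrim_1(\nu)/\lambda$ is a horizontal strip.

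The remaining column-strictness checks involve a label $i$. Above box $(i,\nu_i)$ sits a $1$ or an $i-1$, both of which are smaller. Below it sits either nothing or, when $\nu_{i+1}=\nu_i$, the label $i+1$; a $1$ cannot occur there since $\nu_{i+1}\le\nu_i$.

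Finally, the lattice condition holds. $\revword(T)$ reads the $1$'s of row $1$ first, then, for each $i\ge 2$ in turn, $i$ followed by some $1$'s. Every prefix therefore contains $\#1\ge 1\ge\#2\ge\cdots$, with the labels $2,\dots,h$ appearing in increasing order.

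The main obstacle is Step 1, specifically justifying that Lemma~\ref{lem:LR distinct rows} applies to the first occurrence of $1$ and chains correctly. Since $2,\dots,h$ occur once each, "first occurrence" is unambiguous for them, and the chain argument goes through. The rest is routine bookkeeping.
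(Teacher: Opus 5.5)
Your proof is correct and follows the paper's argument. Lemma~\ref{lem:LR distinct rows} forces label $i$ into row $i$ (so $\ell(\nu)=h$), weak row-increase puts it in the last box of that row (giving $\lambda\subseteq\rtrim_1(\nu)$ and uniqueness), and column-strictness of the remaining $1$'s gives the horizontal-strip condition.

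You also go somewhat further than the paper, which proves only this necessity-and-uniqueness direction. You state explicitly the size hypothesis $|\nu|=|\lambda|+w+h-1$, without which the ``$1$'' case of the statement is false, and you check that the forced filling is semistandard and lattice, so you actually establish the converse as well.
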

\begin{proof}
    Suppose that there exists a Littlewood--Richardson tableau $T \in \LR(\nu / \lambda, H(w, h))$.
    Then each of the labels $2, \ldots, h$ occur exactly once in $T$, whereas the label $1$ occurs $w$ times.
    We show that $T$ is uniquely determined by $\nu$ and $\lambda$, and that the existence of $T$ forces the conditions that $\ell(\nu) = h$, $\lambda\subset\rtrim_1(\nu)$, and $\rtrim(\nu)/\lambda$ is a horizontal strip.

    Since $\ell(\nu)\leq h$ by assumption, Lemma~\ref{lem:LR distinct rows} implies that the first occurrence of each label $i$ in $T$ must be in row $i$. 
    Thus $\ell(\nu) = h$.
    The weakly increasing condition on the rows of $T$ then implies that the rightmost entry of row $i$ of $T$ is an $i$ for all $i\in[h]$.
    Since $T$ has shape $\nu/\lambda$, we deduce that $\lambda\subseteq\rtrim_1(\nu)$.
    The remaining boxes of $T$ form the skew shape $\rtrim_1(\nu)/\lambda$ and are filled with the $w-1$ remaining $1$ labels. 
    Since the columns of $T$ are strictly increasing, it follows that $\rtrim_1(\nu)/\lambda$ is a horizontal strip.
    Moreover, we see that $T$ is uniquely determined by the shapes $\nu$ and $\lambda$ as claimed. \qedhere
\end{proof}

\section{Technical lemmas}\label{sec:tech}
In this section, we collect some technical lemmas about \emph{bound partitions}, a construction which we use frequently in proving Theorem \ref{thm:main}.

Throughout this section, let $\lambda, \mu$ be two partitions and let $k \in \Z_{\geq 0}$.
\begin{definition}
    \label{def:bound partitions}
    The \emph{upper bound partition} of $\lambda$ and $\mu$ is $\nu_+(\lambda, \mu)\coloneq\lambda \cap \mu$, and the \emph{$k$-th lower bound partition} is $\nu_-^k(\lambda, \mu)\coloneq \ctrim_k(\lambda) \cup \rtrim_k(\mu)$ (where $\ctrim$ and $\rtrim$ are as in Definition \ref{def:trimming}).
\end{definition}
\begin{definition}
    Let $\nu_+ = \nu_+(\lambda, \mu)$ and let $\mathsf b = (i, j)$ be an edge box in $\nu_+$.
    We say that ${\sf b}$ is \emph{shadowed downwards} if $\lambda / \nu_+$ contains a box in position $(i+1, j)$, and \emph{shadowed rightwards} if $\mu / \nu_+$ contains a box in  position $(i, j+1)$.
\end{definition}
\begin{lemma}
    \label{lem:everything is shadowed}
    Let $\nu_+ = \nu_+(\lambda, \mu)$ and let $\nu_-^k = \nu_-^k(\lambda, \mu)$.
    Let $\mathsf b = (i, j)$ be a box in $\nu_+$.
    
    If $\nu_-^k = \nu_+$, then the following statements hold:
    \begin{enumerate}
        \item there exist at most $k$ boxes below ${\sf b}$ in column $j$ of $\lambda / \nu_+$ and at most $k$ boxes right of ${\sf b}$ in row $i$ of $\mu / \nu_+$; and
        
        \item there exist exactly $k$ boxes below $\mathsf b$ in column $j$ of $\lambda / \nu_+$, or else there exist exactly $k$ boxes right of $\mathsf b$ in row $i$ of $\mu / \nu_+$.
    \end{enumerate}
\end{lemma}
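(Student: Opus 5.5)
The plan is to translate everything into column lengths of $\lambda$ and row lengths of $\mu$, and then read off both statements from the equality $\nu_-^k=\nu_+$ restricted to the single box $\mathsf b=(i,j)$.

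\textbf{Dictionary.} Since $\ctrim_k$ removes the bottom $k$ boxes of each column, column $j$ of $\ctrim_k(\lambda)$ has length $\max\{\lambda'_j-k,0\}$. Likewise row $i$ of $\rtrim_k(\mu)$ has length $\max\{\mu_i-k,0\}$. Because $\nu_+=\lambda\cap\mu$ is contained in $\lambda$, column $j$ of $\lambda/\nu_+$ is a contiguous segment directly below the last box of column $j$ of $\nu_+$. So the number of boxes below $\mathsf b$ in column $j$ of $\lambda/\nu_+$ is $a:=\lambda'_j-(\nu_+)'_j$. Symmetrically, the number of boxes right of $\mathsf b$ in row $i$ of $\mu/\nu_+$ is $c:=\mu_i-(\nu_+)_i$. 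With this reading, part (1) says $a,c\le k$, and part (2) says $a=k$ or $c=k$.

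\textbf{Part (1).} Suppose $a>k$. Then $\lambda'_j-k>(\nu_+)'_j\ge i$, since $\mathsf b\in\nu_+$. Hence the box $((\nu_+)'_j+1,\,j)$ lies in $\ctrim_k(\lambda)\subseteq\nu_-^k=\nu_+$, which contradicts the definition of $(\nu_+)'_j$. The bound $c\le k$ follows by the symmetric argument using $\rtrim_k(\mu)$ and row $i$.

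\textbf{Part (2).} This is the main step: it has to use the reverse inclusion $\nu_+\subseteq\nu_-^k$. Since $\mathsf b\in\nu_+=\nu_-^k$, either $\mathsf b\in\ctrim_k(\lambda)$ or $\mathsf b\in\rtrim_k(\mu)$. I would apply this not to $\mathsf b$ but to the extremal box of $\nu_+$ in its row or column.

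Let $p=(\nu_+)'_j\ge i$, and look at the bottom box $(p,j)$ of column $j$ of $\nu_+$. If it lies in $\ctrim_k(\lambda)$, then $p\le\lambda'_j-k$, so $a\ge k$, and part (1) gives $a=k$. Otherwise $(p,j)\in\rtrim_k(\mu)$, so $j\le\mu_p-k$, meaning row $p$ of $\mu$ extends at least $k$ boxes past column $j$.

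The difficulty is that this information concerns row $p$, not row $i$. So I would instead examine the right end $(i,q)$ of row $i$ of $\nu_+$, where $q=(\nu_+)_i$. If $(i,q)\in\rtrim_k(\mu)$, then $c\ge k$, and part (1) gives $c=k$. Otherwise $(i,q)\in\ctrim_k(\lambda)$, so column $q$ of $\lambda$ extends at least $k$ boxes below row $i$.

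In this last subcase I would use $q\ge j$, so that $\lambda'_j\ge\lambda'_q\ge i+k$. I would then compare with $p$, trying to show $p=i$ or to reach a contradiction via the equality at the box $(i+1,j)$ and repeating. The cleanest route is probably induction on $p-i$ together with $q-j$, walking from $\mathsf b$ along the boundary of $\nu_+$. I expect this bookkeeping — combining the column-end and row-end conditions when $\mathsf b$ is not itself an edge box — to be the main obstacle. If it resists, I would first prove the statement for edge boxes and then propagate it inward using monotonicity of $\lambda'$ and $\mu$.
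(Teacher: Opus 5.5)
Your dictionary and part (1) are correct and match the paper. Part (2), however, is not proved. Your two reductions leave exactly the case $\mu_p\ge j+k$ and $\lambda'_q\ge i+k$ (equivalently $a<k$ and $c<k$), and nothing you propose excludes it.

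The tools you list cannot close it on their own: the equality $\nu_-^k=\nu_+$ applied box by box, monotonicity of $\lambda'$ and $\mu$, and settling edge boxes first before propagating inward. None of them uses that $\nu_+$ is \emph{exactly} $\lambda\cap\mu$, i.e.\ that $\lambda/\nu_+$ and $\mu/\nu_+$ are disjoint. For instance, take $k=1$, $\lambda=\mu=(2,2)$, and let $\nu=(2,1)$ play the role of $\nu_+$. Then $\nu\subseteq\lambda\cap\mu$ and $\ctrim_1(\lambda)\cup\rtrim_1(\mu)=(2)\cup(1,1)=\nu$, and (2) holds at both edge boxes $(1,2)$ and $(2,1)$. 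Yet $(1,1)$ has no box below it in $\lambda/\nu$ and none to its right in $\mu/\nu$.

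For comparison, the paper proves (2) with a one-line assertion: $\mathsf b\in\ctrim_k(\lambda)$ or $\mathsf b\in\rtrim_k(\mu)$ gives at least $k$ boxes below $\mathsf b$ in $\lambda/\nu_+$ or at least $k$ to its right in $\mu/\nu_+$. That translation is valid when $\mathsf b$ is a corner of $\nu_+$, since there your two reductions concern the same box and finish at once. Corners are also the only case used later, in Lemma~\ref{lem:* if nu_- = nu_+}. You were right that this does not cover an arbitrary $\mathsf b$, but the replacement argument is missing.

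What closes the gap is a walk along the corners of $\nu_+$, which is where disjointness enters. Assume $a<k$ and $c<k$, so $k\ge 1$.

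First, locate two corners. The box $(p,(\nu_+)_p)$ is a corner of $\nu_+$ in some column $s\ge j$. Since $\lambda'_s\le\lambda'_j<p+k$, the corner case forces it to have exactly $k$ boxes of $\mu/\nu_+$ to its right. Symmetrically, the corner $((\nu_+)'_q,q)$ has exactly $k$ boxes of $\lambda/\nu_+$ below it. Hence these two corners are distinct, and the first lies strictly below and to the left of the second.

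Next, compare consecutive corners $(i_t,j_t)$ and $(i_{t+1},j_{t+1})$ with $i_t>i_{t+1}$. Suppose $(i_t,j_t+1)\in\mu$ and $(i_{t+1}+1,j_{t+1})\in\lambda$. Then the notch box $(i_{t+1}+1,j_t+1)$ lies in both $\mu$ and $\lambda$ but not in $\nu_+$, which contradicts $\nu_+=\lambda\cap\mu$. This is the same argument as in the proof of Lemma~\ref{lem:* if nu_- = nu_+}.

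Finally, walk from the first corner to the second. Each corner after the first has no box of $\lambda/\nu_+$ below it, so by the corner case it has $k\ge 1$ boxes of $\mu/\nu_+$ to its right, and the induction continues. At $((\nu_+)'_q,q)$ this contradicts the $k\ge1$ boxes of $\lambda/\nu_+$ below it.
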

\begin{proof}
    Since $\ctrim_k(\lambda) \subseteq \nu_-^k = \nu_+$, we know that $\lambda / \nu_+ \subseteq \lambda / \ctrim_k(\lambda)$.
    And by construction $\lambda / \ctrim_k(\lambda)$ contains at most $k$ boxes in each column, so $\lambda / \nu_+$ contains at most $k$ boxes in each column as well.
    Similarly, $\mu / \nu_+$ contains at most $k$ boxes in each row. This proves part (1).

    Now, if $\nu_-^k = \nu_+$, then by definition $\ctrim_k(\lambda) \cup \rtrim_k(\mu) = \lambda \cap \mu$.
    Therefore, if $\mathsf b \in \nu_+$, then either $\mathsf b \in \ctrim_k(\lambda)$, that is, there exist at least $k$ boxes below $\mathsf b$ in column $j$ of $\lambda / \nu_+$; or else $\mathsf b \in \rtrim_k(\lambda)$, that is, there exist at least $k$ boxes right of $\mathsf b$ in row $i$ of $\mu / \nu_+$.
    Combining this fact with part (1) proves part (2).
\end{proof}
\begin{lemma}
    \label{lem:* if nu_- = nu_+}
    Let $\lambda, \mu$ be two partitions.
    Let $\nu_+ = \nu_+(\lambda, \mu)$ and let $\nu_-^k = \nu_-^k(\lambda, \mu)$.
    Let $C = \{\mathsf c_1 = (i_1, j_1), \ldots, \mathsf c_q = (i_q, j_q)\}$ be the set of corner boxes in the Young diagram of $\nu_+$, ordered from bottom-left to top-right \textup{(}so $i_1 > \cdots > i_q$ and $j_1 < \cdots < j_q$\textup{)}.

    If $\nu_-^k = \nu_+$, then there exist $u, v\in[q]$ such that:
    \begin{enumerate}
    \item
        each corner box ${\sf c}_t$ is shadowed downwards if and only if $t\leq u$;
    \item
        each corner box ${\sf c}_t$ is shadowed rightwards if and only if $t \geq v$; and
    \item
        either $u = v$ or $u = v - 1$.
    \end{enumerate}
    Moreover, the following statements hold:
    \begin{enumerate}
    \item[(4)]
        if an edge box $\mathsf b = (i, j)$ is shadowed downwards, and the box $\mathsf b' = (i, j - 1)$ is an edge box, then $\mathsf b'$ is shadowed downwards; and
    \item[(5)]
        if an edge box $\mathsf b = (i, j)$ is shadowed rightwards, and the box $\mathsf b' = (i - 1, j)$ is an edge box, then $\mathsf b'$ is shadowed rightwards.
    \end{enumerate}
\end{lemma}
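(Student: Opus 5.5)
Throughout I assume $\nu_-^k=\nu_+$ and rely on Lemma~\ref{lem:everything is shadowed}. A corner box ${\sf c}_t=(i_t,j_t)$ of $\nu_+$ lies in $\lambda\cap\mu$. Since ${\sf c}_t$ is a corner, $(i_t+1,j_t)\notin\nu_+$, so $(i_t+1,j_t)\in\lambda$ exactly when it lies in $\lambda/\nu_+$. Hence ${\sf c}_t$ is shadowed downwards if and only if column $j_t$ of $\lambda/\nu_+$ has a box below ${\sf c}_t$. Similarly, ${\sf c}_t$ is shadowed rightwards if and only if row $i_t$ of $\mu/\nu_+$ has a box to its right. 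Part (2) of Lemma~\ref{lem:everything is shadowed} then says every corner is shadowed in at least one direction, provided $k\geq 1$. The case $k=0$ is degenerate: then $\nu_-^0=\lambda\cup\mu=\lambda\cap\mu$ forces $\lambda=\mu$, no box is shadowed, and I would treat it separately (or note $k\ge1$ is intended).

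For (1), I would show that if ${\sf c}_t$ is shadowed downwards and $s<t$, then ${\sf c}_s$ is shadowed downwards. The key observation is that nothing can be shadowed both ways "in a crossing pattern." Suppose ${\sf c}_t$ is shadowed downwards, so $(i_t+1,j_t)\in\lambda$. Since $\lambda$ is a partition and $j_s<j_t$, we get $(i_t+1,j_s)\in\lambda$. Because $i_s>i_t$, the next step is to push this down to $(i_s+1,j_s)$.

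This is where I would use the trimming structure. Let $j'$ range over columns $j_s\le j'\le j_t$. Each box $(i_t,j')$ is in $\nu_+$, and I would analyze the box $(i_s,j_s)$ directly. Suppose ${\sf c}_s$ were not shadowed downwards. By Lemma~\ref{lem:everything is shadowed}, row $i_s$ of $\mu/\nu_+$ then has exactly $k\ge1$ boxes to the right of ${\sf c}_s$, so $(i_s,j_s+1)\in\mu$. Also, $(i_s,j_s+1)\notin\lambda$, since otherwise it would lie in $\nu_+$, contradicting that ${\sf c}_s$ is a corner. But $(i_t+1,j_t)\in\lambda$ with $i_t+1\le i_s$ and $j_t\ge j_s+1$, so $(i_s,j_s+1)\in\lambda$ because $\lambda$ is a partition. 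This is a contradiction, which proves (1) with $u$ equal to the largest downward-shadowed index. Part (2) follows by the symmetric argument with rows and columns swapped. The same containment argument also proves (4) and (5). Take $\mathsf b'=(i,j-1)$ as an edge box, with $\mathsf b$ shadowed downwards, so $(i+1,j)\in\lambda$ and hence $(i+1,j-1)\in\lambda$. It remains to check that $(i+1,j-1)\notin\nu_+$. If it were in $\nu_+$, then $\mathsf b'$ would have a box below it and a box $\mathsf b$ to its right, so $\mathsf b'$ would not be an edge box. Hence $(i+1,j-1)\in\lambda/\nu_+$. Part (5) is the transpose of this argument.

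For (3), I would combine the two facts above. Every corner is shadowed in some direction, so the indices $\{1,\dots,u\}\cup\{v,\dots,q\}$ cover $[q]$, which gives $v\le u+1$. The main obstacle is the reverse inequality $u\le v$, which says that at most one corner is doubly shadowed. My first attempt would be as follows. Suppose ${\sf c}_s$ and ${\sf c}_t$ with $s<t$ are both shadowed in both directions. Then $(i_s,j_s+1)\in\mu$ and $(i_t+1,j_t)\in\lambda$, and the partition property forces $(i_s,j_s+1)\in\lambda$. Likewise $(i_t+1,j_t)\in\mu$ would put that box into $\nu_+$, contradicting that ${\sf c}_t$ is a corner. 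Actually the first containment alone already yields the contradiction, since $(i_s,j_s+1)\in\lambda\cap\mu=\nu_+$. I also need to handle the possibility that $u$ or $v$ is undefined when no corner is shadowed in a given direction. I would resolve this by the convention $u=0$ or $v=q+1$, and then check against the statement's requirement $u,v\in[q]$.
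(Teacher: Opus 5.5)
Your arguments for (4) and (5) are correct, and so are your side remarks. In particular, the paper's proof also silently needs $k\ge 1$ when it invokes Lemma~\ref{lem:everything is shadowed}(2), and it needs the conventions $u=0$ or $v=q+1$. The problem is the core step you use for (1), (2) and (3). You claim that $(i_t+1,j_t)\in\lambda$, together with $i_t+1\le i_s$ and $j_s+1\le j_t$, forces $(i_s,j_s+1)\in\lambda$. A partition is closed only under moving \emph{up} and left. From $(i_t+1,j_t)\in\lambda$ you can conclude $(a,b)\in\lambda$ only for $a\le i_t+1$ and $b\le j_t$. Since $i_s\ge i_t+1$, your inference is valid only when $i_s=i_t+1$. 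For example, take $\nu_+=(2,1,1)$, so $\mathsf c_1=(3,1)$ and $\mathsf c_2=(1,2)$, and take $\lambda=(2,2,1)$. Then $(2,2)\in\lambda$ but $(3,2)\notin\lambda$. Your ``at most one doubly shadowed corner'' argument for $u\le v$ rests on the same faulty containment. Your earlier idea of pushing $(i_t+1,j_s)$ down to $(i_s+1,j_s)$ has the same directional error.

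The box that actually gives a contradiction is $(i_t+1,j_s+1)$. It lies in $\mu$ by moving up from $(i_s,j_s+1)$, and in $\lambda$ by moving left from $(i_t+1,j_t)$. However, it lies outside $\nu_+$ only when $\mathsf c_s$ and $\mathsf c_t$ are consecutive corners, because then row $i_t+1$ of $\nu_+$ has length $j_s$. For $s<t-1$ that row has length $j_{t-1}>j_s$, and there is no contradiction. So no single-box comparison of arbitrary $s<t$ can prove (1) or rule out two doubly shadowed corners.

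The missing idea is the paper's. First prove only the consecutive statement: if $\mathsf c_t$ is shadowed rightwards, then $\mathsf c_{t+1}$ is not shadowed downwards. This uses $\mathsf d_t=(i_{t+1}+1,j_t+1)\notin\nu_+$. Then propagate it using the fact that every corner is shadowed in at least one direction:
\begin{itemize}
\item The rightward-shadowed indices form an up-set: if $t$ is one, then $t+1$ is not downward shadowed, so it must be rightward shadowed.
\item The downward-shadowed indices form a down-set, by the symmetric reasoning.
\item No $t$ in the first set has $t+1$ in the second. Together with the covering condition $u\ge v-1$, this gives $u\in\{v-1,v\}$.
\end{itemize}
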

\begin{proof}
    Consider two consecutive corner boxes $\mathsf c_t, \mathsf c_{t + 1} \in C$.
    Since these boxes are consecutive corners, we must have that the box $\mathsf d_t = (i_{t + 1} + 1, j_t + 1) \notin \nu_+$.
    Note that $i_{t + 1} + 1 \leq i_t$, so if the box $\mathsf a_t = (i_t, j_t + 1) \in \mu$, then $\mathsf d_t \in \mu$.
    Likewise, $j_t + 1 \leq j_{t + 1}$, so if the box $\mathsf b_t = (i_{t + 1} + 1, j_{t + 1}) \in \lambda$, then $\mathsf d_t \in \lambda$.
    Hence if $\mathsf a_t \in \mu$, then we must have that $\mathsf b_t \notin \lambda$.
    Equivalently, if $\mathsf c_t$ is shadowed rightwards, then $\mathsf c_{t + 1}$ is not shadowed downwards.
    \begin{figure}[h]
        \[
            \ytableausetup{boxsize=1.2em}
            \begin{yctableau}
                \ & \ & \ & \ & \mathsf c_2 \\
                \ & \none[\mathsf d_1] & \none & \none & \none[\mathsf b_1] \\
                \ \\
                \mathsf c_1 & \none[\mathsf a_1]
            \end{yctableau}
        \]
        \caption{An example of the setup for the proof of Lemma \ref{lem:* if nu_- = nu_+}, letting $t = 1$}
    \end{figure}

    But by Lemma \ref{lem:everything is shadowed}(2), every corner box in $\nu_+$ is either shadowed rightwards or shadowed downwards.
    Hence statements (1--3) hold.
    Furthermore, if statement (4) did not hold, then $\lambda$ would not be a valid partition shape; and likewise, if statement (5) did not hold, then $\mu$ would not be a valid partition shape.
\end{proof}

Under the further assumption that $k=1$, we can completely describe $\lambda/\nu_+$ and $\mu/\nu_+$.
\begin{lemma}
    \label{lem:stab. shapes if nu_- = nu_+}
    Let $\lambda$, $\mu$, $C$, $u$, and $v$ be as in Lemma \ref{lem:* if nu_- = nu_+}.
    Let $\nu_+ = \nu_+(\lambda, \mu)$ and let $\nu_-^1 = \nu_-^1(\lambda, \mu)$.
    If $\nu_-^1 = \nu_+$, then the following statements hold:
    \begin{enumerate}
    \item
        $\lambda / \nu_+$ consists of exactly one box in each of the first $j_u$ columns; and
    \item
        $\mu / \nu_+$ consists of exactly one box in each of the first $i_v$ rows.
    \end{enumerate}
\end{lemma}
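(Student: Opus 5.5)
By symmetry (transposing swaps $\lambda\leftrightarrow\mu$, rows $\leftrightarrow$ columns, $\ctrim\leftrightarrow\rtrim$, and shadowed downwards $\leftrightarrow$ rightwards), it suffices to prove statement (1); statement (2) follows by applying (1) to the conjugate partitions. The plan is to establish two things: $\lambda/\nu_+$ has at most one box in each column, and the columns that do contain a box of $\lambda/\nu_+$ are exactly columns $1,\dots,j_u$.

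The first claim is immediate from Lemma~\ref{lem:everything is shadowed}(1) with $k=1$. Moreover, $\nu_+\subseteq\lambda$, so any box of $\lambda/\nu_+$ in column $j$ lies directly below the bottom box of column $j$ of $\nu_+$ (or in row $1$ if that column of $\nu_+$ is empty). Such a bottom box is an edge box of $\nu_+$. Hence column $j$ contains a box of $\lambda/\nu_+$ if and only if the bottom box of column $j$ of $\nu_+$ is shadowed downwards, or column $j$ of $\nu_+$ is empty and $(1,j)\in\lambda$.

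Next we show that every column $j\le j_u$ has such a box. Corner $\mathsf c_u$ is shadowed downwards by Lemma~\ref{lem:* if nu_- = nu_+}(1). Walking left along the bottom edge of $\nu_+$, the bottom boxes of columns $j<j_u$ are reached by two kinds of steps. A horizontal step from $(i,j)$ to $(i,j-1)$ within the same row preserves shadowing by Lemma~\ref{lem:* if nu_- = nu_+}(4). When the column height jumps at a corner $\mathsf c_t$ with $t<u$, that corner is itself shadowed downwards by Lemma~\ref{lem:* if nu_- = nu_+}(1). So by induction every column $1,\dots,j_u$ has a bottom box that is shadowed downwards. Alternatively, one can note that $\lambda$ is a partition containing the box below $\mathsf c_u$, hence it contains $(i_u+1,j)$ for all $j\le j_u$, and one then compares heights.

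For columns $j>j_u$ we must rule out boxes of $\lambda/\nu_+$. There are two cases.
\begin{itemize}
\item Suppose $j\le j_q$, so column $j$ of $\nu_+$ is nonempty. Its bottom box $\mathsf b=(i,j)$ lies in the row of some corner $\mathsf c_t$ with $t>u$, and $\mathsf c_t$ is not shadowed downwards. If $\mathsf b$ were shadowed downwards, then $(i+1,j)\in\lambda$, and since $\lambda$ is a partition, $(i+1,j')\in\lambda$ for all $j'\le j$. The bottom boxes of all columns between $j$ and $j_t$ lie in the same row $i$, because $\mathsf c_t$ is the corner ending that run. Shadowing therefore propagates rightward to $\mathsf c_t$ via the contrapositive of Lemma~\ref{lem:* if nu_- = nu_+}(4), which is a contradiction.
\item Suppose $j>j_q$, so column $j$ of $\nu_+$ is empty. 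If $u<q$, then the corner $\mathsf c_q$ is not shadowed downwards, so $(i_q+1,j_q)\notin\lambda$, and hence $(1,j)\notin\lambda$ would force… Here the case actually needs care: we want $(1,j)\notin\lambda$ for $j>j_q$. If $u=q$, we need $\lambda_1\le j_q$.
\end{itemize}

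The main obstacle is the boundary case of columns beyond $\nu_+$ (and, symmetrically, rows beyond it). For it I would use Lemma~\ref{lem:everything is shadowed}(2) together with $\nu_-^1=\nu_+$ directly. Suppose $(1,j)\in\lambda$ with $j>j_q=\nu_{+,1}$. Then $(1,j)\notin\mu$, since otherwise it would lie in $\nu_+=\lambda\cap\mu$. Also $(1,j)\notin\ctrim_1(\lambda)$, because $\ctrim_1(\lambda)\subseteq\nu_+$. So the column of $\lambda$ through $(1,j)$ has height exactly $1$, hence $\ell(\lambda')$… The additional constraint comes from the box $(1,j_q)=\mathsf c_q$ or row $1$ of $\nu_+$. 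It is shadowed rightwards iff $(1,j_q+1)\in\mu$, and shadowed downwards iff $(2,j_q)\in\lambda$. The latter, together with $(1,j)\in\lambda$ for $j>j_q$, would make $\ctrim_1(\lambda)$ contain $(1,j_q+1)$ whenever $\lambda$ has height at least $2$ in column $j_q+1$. I expect that resolving these boundary configurations is the delicate point: a naive example $\lambda=(2)$, $\mu=(1)$ gives $\nu_+=(1)$ and $\nu_-^1=(1)\cup\emptyset=(1)$, with an extra box $(1,2)$ of $\lambda/\nu_+$ outside the first $j_u$ columns. So I would first check whether the statement implicitly assumes that $\lambda,\mu$ arise from the setup of Section~\ref{sec:dual Cauchy} (e.g.\ sizes or shapes constraining boxes outside the corners), and if so invoke that assumption precisely at this step.
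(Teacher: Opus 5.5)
\textbf{There is a genuine gap: the case $j_u<j\le j_q$ fails, and it cannot be patched because statement (1) is false as written.}

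Your treatment of the columns $j\le j_u$ is fine. The case $j_u<j\le j_q$ breaks down for the following reason. Lemma~\ref{lem:* if nu_- = nu_+}(4) transports shadowing \emph{leftward} along a row: the partition property of $\lambda$ only gives $(i+1,j')\in\lambda$ for $j'\le j$. Its contrapositive transports non-shadowing \emph{rightward}. So nothing stops a non-corner edge box to the left of an unshadowed corner $\mathsf c_t$ from being shadowed downwards.

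Here is a counterexample to (1). Take $\lambda=(3,2,1)$ and $\mu=(4,1)$.
Then $\nu_+=(3,1)$ and $\nu_-^1=(2,1)\cup(3)=(3,1)=\nu_+$.
The corners are $\mathsf c_1=(2,1)$, shadowed only downwards, and $\mathsf c_2=(1,3)$, shadowed only rightwards. So $u=1$, $v=2$ and $j_u=1$.
Yet $\lambda/\nu_+=\{(2,2),(3,1)\}$ meets column $2$, via the box below the non-corner edge box $(1,2)$.

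The failure persists in the setting where the lemma is actually applied. Take $\mu=(3,3,1)$, so $\alpha=\mu_{(1)}=(3,3,2,1)$ and $\beta'=((\mu')_{(1)})'=(4,4,1)$. Then $\nu_+=\nu_-^1=(3,3,1)$, $u=1$ and $j_u=1$, but $|\alpha/\nu_+|=2$. So checking whether Section~\ref{sec:dual Cauchy} supplies extra hypotheses would not rescue it. The paper's own proof asserts the same unjustified claim ``none in columns $j>j_u$,'' so the defect lies in the lemma itself.

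Separately, your boundary example does not satisfy the hypothesis. Since $\ctrim_1((2))=\emptyset$, you get $\nu_-^1=\emptyset\neq(1)=\nu_+$. A genuine boundary failure is $\lambda=(2,1)$, $\mu=(1)$: there $\nu_+=\nu_-^1=(1)$ and $(1,2)\in\lambda/\nu_+$. In that example no corner is shadowed rightwards, so a $v\in[q]$ does not even exist.

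\textbf{What does hold.} The correct version replaces $j_u$ by $a:=|\lambda/\nu_+|$, and symmetrically $i_v$ by $|\mu/\nu_+|$. The claim is that the columns meeting $\lambda/\nu_+$ form an initial segment $[1,a]$. Write $h_j$ for the column lengths of $\nu_+$. Suppose column $j+1$ meets $\lambda/\nu_+$ and column $j$ does not. Then $h_j=\lambda'_j\ge\lambda'_{j+1}=h_{j+1}+1$, where the last equality is Lemma~\ref{lem:everything is shadowed}(1). Hence $(h_j,j)$ is a corner of $\nu_+$ that is not shadowed downwards. By Lemma~\ref{lem:everything is shadowed}(2) it is shadowed rightwards, so $(h_j,j+1)\in\mu$. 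Therefore $(h_{j+1}+1,j+1)\in\lambda\cap\mu=\nu_+$, a contradiction.

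This single argument covers both of your cases, including columns beyond $\nu_+$. Combined with Lemma~\ref{lem:everything is shadowed}(1), it gives exactly one box in each of the first $a$ columns. Here $a\ge j_u$, and the inequality can be strict, as the examples above show.

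Propositions~\ref{prop:stab. if nu_- = nu_+} and~\ref{prop:equal if nu_- = nu_+} then need to be rederived from this form rather than from the corner indices. Their conclusions do survive, but identities such as $|\alpha/\nu_+|=j_u$ used there do not.
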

\begin{proof}
    Lemma~\ref{lem:* if nu_- = nu_+}(1, 4) implies that $\lambda / \nu_+$ has at least one box in each of the first $j_u$ columns and none in columns $j > j_u$.
    Lemma \ref{lem:everything is shadowed}(1) then implies that $\lambda / \nu_+$ has exactly one box in each of the first $j_u$ columns.
    
    Likewise, Lemma \ref{lem:* if nu_- = nu_+}(2, 5) implies that $\mu / \nu_+$ has at least one box in each of the first $i_v$ rows and none in rows $i > i_v$.
    Lemma \ref{lem:everything is shadowed}(1) then implies that $\mu / \nu_+$ has exactly one box in each of the first $i_v$ rows.
\end{proof}

We conclude this section with a lemma showing that certain intervals in Young's lattice are subset lattices.
These intervals feature prominently in the proof of our main theorems.
\begin{lemma}
    \label{lem:nu corners}
    Let $\nu_+ = \nu_+(\lambda, \mu)$ and let $\nu_-^1 = \nu_-^1(\lambda, \mu)$.
    If $\nu_-^1 \subseteq \nu_+$, then each box $\mathsf b \in \nu_+ / \nu_-^1$ is a corner box of $\nu_+$. Thus the interval $[\nu_-^1, \nu_+]$ is a subset lattice on the boxes of $\nu_+/\nu_-^1$.
\end{lemma}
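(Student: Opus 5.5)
Write $\nu_+ = \lambda\cap\mu$ and $\nu_-^1 = \ctrim_1(\lambda)\cup\rtrim_1(\mu)$, and assume $\nu_-^1\subseteq\nu_+$. The plan is to show that if $\mathsf b=(i,j)\in\nu_+/\nu_-^1$, then neither $(i+1,j)$ nor $(i,j+1)$ lies in $\nu_+$. Then $\mathsf b$ is a corner box of $\nu_+$.

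\textbf{Step 1: unpack the definitions.} Since $\mathsf b\notin\nu_-^1$, we have $\mathsf b\notin\ctrim_1(\lambda)$ and $\mathsf b\notin\rtrim_1(\mu)$. Now $\ctrim_1(\lambda)$ has $\lambda'_j-1$ boxes in column $j$ (it removes the bottom box of each column). Since $\mathsf b\in\lambda$, the condition $\mathsf b\notin\ctrim_1(\lambda)$ says that $\mathsf b$ is the bottom box of column $j$ in $\lambda$, so $(i+1,j)\notin\lambda$. Symmetrically, $\mathsf b\in\mu$ and $\mathsf b\notin\rtrim_1(\mu)$ say that $\mathsf b$ is the last box of row $i$ in $\mu$, so $(i,j+1)\notin\mu$.

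\textbf{Step 2: conclude that $\mathsf b$ is a corner.} Because $\nu_+\subseteq\lambda$ and $\nu_+\subseteq\mu$, Step~1 gives $(i+1,j)\notin\nu_+$ and $(i,j+1)\notin\nu_+$. So $\mathsf b$ is a corner box of $\nu_+$. Note that the hypothesis $\nu_-^1\subseteq\nu_+$ is not used here; it is only needed so that the interval $[\nu_-^1,\nu_+]$ makes sense.

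\textbf{Step 3: the interval is a subset lattice.} Let $S=\nu_+/\nu_-^1$, a set of corner boxes of $\nu_+$. I claim that for every subset $A\subseteq S$, the diagram $\nu_+\setminus A$ is a partition. Removing one corner box of a partition leaves a partition. After some corners are removed, each remaining corner box $(i,j)$ of $\nu_+$ still has no box of $\nu_+$ to its right or below it, so it is still a corner and can be removed in turn. Since every $A\subseteq S$ is disjoint from $\nu_-^1$, the partition $\nu_+\setminus A$ contains $\nu_-^1$, so the map $A\mapsto\nu_+\setminus A$ sends subsets of $S$ into $[\nu_-^1,\nu_+]$. It is clearly injective and order-reversing. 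Conversely, any $\rho$ with $\nu_-^1\subseteq\rho\subseteq\nu_+$ equals $\nu_+\setminus(\nu_+/\rho)$ with $\nu_+/\rho\subseteq S$, so the map is surjective. Young's lattice operations are union and intersection of diagrams, so $[\nu_-^1,\nu_+]$ is the Boolean lattice on $S$, up to the order reversal, which Boolean lattices admit.

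I expect no real obstacle. The only point needing care is the justification in Step~3 that removing several corners keeps the shape a partition, which follows because each remaining corner still has no neighbor of $\nu_+$ to its right or below.
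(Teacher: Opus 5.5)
Your proof is correct and follows the paper's argument. The paper likewise observes that $\mathsf b\notin\nu_-^1$ forces $\mathsf b$ to be the bottom box of its column in $\lambda$ and the last box of its row in $\mu$, hence a corner of $\nu_+=\lambda\cap\mu$; your Step 3 soundly fills in the subset-lattice consequence, which the paper states without further justification.
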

\begin{proof}
    Since $\mathsf b \notin \nu_-^1$, the definition of $\nu_-^1$ implies that $\mathsf b$ is the bottom box in its column in $\lambda$ and the rightmost box in its row in $\mu$.
    In other words, $\mathsf b$ has neither a box immediately below it in $\lambda$ nor a box immediately to the right of it in $\mu$.
    But $\nu_+ = \lambda \cap \mu$; hence $\mathsf b$ has neither a box immediately below it nor a box immediately to the right of it in $\nu_+$.
\end{proof}

\section{Proof of the \texorpdfstring{$k=1$}{superminimal} case}
\label{sec:dual Cauchy}

The goal of this section is to solve the $k=1$ case of Problem~\ref{prob:main}. 
We do this by first using the dual Cauchy identity \eqref{eqn:dual Cauchy} to derive Problem~\ref{prob:dual}, a combinatorial reformulation of the generalized Cauchy identity \eqref{eqn:gen. Cauchy}.

\begin{definition}
    Let $\alpha, \beta$ be a pair of partitions such that $|\alpha| = |\beta|$.
    Then we define
    \[
        X_{\alpha, \beta}(\lambda, \nu)
        = \LR(\alpha / \nu, \lambda) \times \LR(\beta / \nu', \lambda),
        \quad
        X_{\alpha, \beta, k}
        = \bigsqcup_{\substack{\lambda, \nu \\ \ell(\lambda) \leq k}} X_{\alpha, \beta}(\lambda, \nu).
    \]
    Let the \emph{sign} of $\chi \in X_{\alpha, \beta}(\lambda, \nu)$ be $\sgn(\chi) = (-1)^{|\nu|}$.
    Then we define
    \begin{equation}
        \begin{split}
            C_{\alpha, \beta}(\lambda, \nu)
            &= |X_{\alpha, \beta}(\lambda, \nu)|
            = c^\alpha_{\nu, \lambda}c^\beta_{\nu', \lambda}, \\
            C_{\alpha, \beta, k}
            &= \sum_{\chi \in X_{\alpha, \beta, k}} \sgn(\chi)
            = \sum_{\substack{\lambda, \nu \\ \ell(\lambda) \leq k}} (-1)^{|\nu|} C_{\alpha, \beta}(\lambda, \nu).
        \end{split}
    \end{equation}
\end{definition}
\begin{theorem}
    \label{thm:dual Cauchy drv.}
    The generalized Cauchy identity \eqref{eqn:gen. Cauchy} is equivalent to the statement that
    \begin{equation}
        \label{eqn:dual coeffs.}
        C_{\alpha, \beta, k} = \begin{cases}
            (-1)^{|\mu|} & \text{if there exists $\mu$ such that $\alpha = \mu_{(k)}, \beta = (\mu')_{(k)}$}, \\
            0 & \text{otherwise}.
        \end{cases}
    \end{equation}
\end{theorem}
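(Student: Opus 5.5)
We will multiply both sides of \eqref{eqn:gen. Cauchy} by the denominator $\prod_{(i,j)\in[m]\times[n]}(1-x_iy_j)$. This turns the generalized Cauchy identity into the equivalent polynomial identity
\[
    \Bigl(\sum_{\ell(\lambda)\leq k}s_\lambda(\vec x)s_\lambda(\vec y)\Bigr)\prod_{(i,j)\in[m]\times[n]}(1-x_iy_j)
    = \sum_{\mu\subseteq[m-k]\times[n-k]}(-1)^{|\mu|}s_{\mu_{(k)}}(\vec x)s_{(\mu')_{(k)}}(\vec y).
\]
The equivalence holds because the denominator is invertible in the ring of formal power series. Next we substitute the dual Cauchy identity \eqref{eqn:dual Cauchy} for the product, writing it as $\sum_{\nu}(-1)^{|\nu|}s_\nu(\vec x)s_{\nu'}(\vec y)$. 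The left side then becomes
\[
    \sum_{\ell(\lambda)\leq k}\sum_{\nu}(-1)^{|\nu|}\bigl(s_\lambda(\vec x)s_\nu(\vec x)\bigr)\bigl(s_\lambda(\vec y)s_{\nu'}(\vec y)\bigr).
\]
Expanding each product by the Littlewood--Richardson rule (Theorem~\ref{thm:LR rule}), and using $c^\alpha_{\lambda,\nu}=c^\alpha_{\nu,\lambda}$, gives
\[
    \sum_{\alpha,\beta}\Bigl(\sum_{\ell(\lambda)\leq k,\,\nu}(-1)^{|\nu|}c^\alpha_{\nu,\lambda}c^\beta_{\nu',\lambda}\Bigr)s_\alpha(\vec x)s_\beta(\vec y)
    = \sum_{\alpha,\beta}C_{\alpha,\beta,k}\,s_\alpha(\vec x)s_\beta(\vec y).
\]
All terms are homogeneous, and $c^\alpha_{\nu,\lambda}\neq 0$ forces $|\alpha|=|\nu|+|\lambda|=|\beta|$. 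So only pairs with $|\alpha|=|\beta|$ appear, which matches the definition of $C_{\alpha,\beta,k}$. For fixed $\alpha,\beta$ the inner sum is finite, since $\nu,\lambda\subseteq\alpha$.

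To compare coefficients we need a basis. The products $s_\alpha(\vec x)s_\beta(\vec y)$, with $\ell(\alpha)\leq m$ and $\ell(\beta)\leq n$, form a $\mathbb Z$-basis of $\mathbb Z[\vec x]^{S_m}\otimes\mathbb Z[\vec y]^{S_n}$. By Remark~\ref{rmk:Schur vanishing}, any terms with $\ell(\alpha)>m$ or $\ell(\beta)>n$ vanish. The identity is therefore equivalent to equating coefficients on the two sides for all such $(\alpha,\beta)$. On the right side the coefficient of $s_\alpha(\vec x)s_\beta(\vec y)$ is $(-1)^{|\mu|}$ if $\alpha=\mu_{(k)}$ and $\beta=(\mu')_{(k)}$ for some $\mu\subseteq[m-k]\times[n-k]$, and $0$ otherwise. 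Here we check that $\mu$ is determined by $\alpha$. By construction of $k$-stabilization, $r(\mu_{(k)})=r(\mu)$, and $\mu$ is recovered by deleting the $k$ rows of length $r(\mu)$ following row $r(\mu)$. So distinct $\mu$ give distinct basis elements and no cancellation occurs on the right.

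The main obstacle is reconciling the index ranges. Statement \eqref{eqn:dual coeffs.} quantifies over all $\mu$ and all $\alpha,\beta$, whereas the identity only sees $\ell(\alpha)\leq m$, $\ell(\beta)\leq n$, and $\mu\subseteq[m-k]\times[n-k]$. We will handle this in two ways.

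First, we observe that $\ell(\mu_{(k)})\leq m$ and $\ell((\mu')_{(k)})\leq n$ together force $\mu\subseteq[m-k]\times[n-k]$. Indeed, $\ell(\mu_{(k)})=\ell(\mu)+k$ whenever $\mu\neq\emptyset$, and $\ell(\mu')=\mu_1$. So on the relevant index set the two descriptions of the right-hand coefficient agree. The case $\mu=\emptyset$ is checked separately.

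Second, we will read \eqref{eqn:dual coeffs.} as holding for all $m,n$ simultaneously. Since $C_{\alpha,\beta,k}$ does not depend on $m,n$, the identity for all $m,n$ is equivalent to \eqref{eqn:dual coeffs.} for all $\alpha,\beta$: any given pair lies in the range $\ell(\alpha)\leq m$, $\ell(\beta)\leq n$ once $m,n$ are large enough. Equivalently, we may work in the ring of symmetric functions, where the Schur functions are linearly independent. This is the reading we will adopt to state the equivalence cleanly.
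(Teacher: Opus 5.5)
Your proposal is correct and follows the same route as the paper: multiply through by the denominator, substitute the dual Cauchy identity, expand both products with the Littlewood--Richardson rule, and compare coefficients in the Schur basis. Your extra bookkeeping fills in details the paper's proof leaves implicit: injectivity of $\mu\mapsto\mu_{(k)}$, the fact that $\ell(\alpha)\le m$ and $\ell(\beta)\le n$ force $\mu\subseteq[m-k]\times[n-k]$, and reading the equivalence over all $m,n$ (equivalently, in the ring of symmetric functions).
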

\begin{proof}
    Multiply across in \eqref{eqn:gen. Cauchy} to obtain that
    \[
        \bigg[\prod_{(i, j) \in [m] \times [n]} (1 - x_i y_j)\bigg]
        \bigg[\sum_{\ell(\lambda) \leq k} s_\lambda(\vec x)s_\lambda(\vec y)\bigg]    
        = \sum_{\mu \in [m - k] \times [n - k]}
            (-1)^{|\mu|} s_{\mu_{(k)}}(\vec x)s_{(\mu')_{(k)}}(\vec y).
    \]
    Then apply the dual Cauchy identity \eqref{eqn:dual Cauchy} to rewrite the product term on the left-hand side:
    \[
        \bigg[\sum_\nu (-1)^{|\nu|} s_\nu(\vec x)s_{\nu'}(\vec y)\bigg]
        \bigg[\sum_{\ell(\lambda) \leq k} s_\lambda(\vec x)s_\lambda(\vec y)\bigg] 
        = \sum_{\mu \in [m - k] \times [n - k]}
            (-1)^{|\mu|} s_{\mu_{(k)}}(\vec x)s_{(\mu')_{(k)}}(\vec y),
    \]
    Apply the Littlewood--Richardson rule \eqref{eqn:LR rule} to multiply the two pairs of Schur polynomials on the left-hand side:
    \[
        \sum_{\alpha, \beta}\Bigg[
            \bigg(
                \sum_{\substack{\nu, \lambda \\ \ell(\lambda) \leq k}}
                (-1)^{|\nu|}c^\alpha_{\nu, \lambda}c^\beta_{\nu',\lambda}
            \bigg)s_\alpha(\vec x)s_{\beta}(\vec y)
        \Bigg]
        = \sum_{\mu \in [m - k] \times [n - k]}
            (-1)^{|\mu|} s_{\mu_{(k)}}(\vec x)s_{(\mu')_{(k)}}(\vec y).
        \qedhere
    \]
    The Schur polynomials form a linear basis of the space of symmetric functions \cite[Corollary 7.10.6]{ECII}, so the coefficients of any two decompositions must be equal.
\end{proof}
By Theorem \ref{thm:dual Cauchy drv.}, to solve Problem \ref{prob:main} it suffices to solve the following problem:
\begin{problem}
    \label{prob:dual}
    Let $\alpha, \beta$ be a pair of partitions such that $|\alpha| = |\beta|$ and let $k \in \Z_{\geq 0}$.
    \begin{enumerate}
    \item
        Prove that if $\alpha = \mu_{(k)}$ and $\beta = (\mu')_{(k)}$ for some partition $\mu$, then $X_{\alpha, \beta, k}$ contains exactly one element, which has sign $(-1)^{|\mu|}$.
    \item
        If no such $\mu$ exists, construct a sign-reversing involution on $X_{\alpha, \beta, k}$.
    \end{enumerate}
\end{problem}
The following subsections solve Problem~\ref{prob:dual}(1) for all $k$ and Problem~\ref{prob:dual}(2) when $k=1$.

\subsection{The non-involution case}
This subsection proves Theorem~\ref{thm:dual coeff. = 1 if stab.}, which solves Problem \ref{prob:dual}(1) for all $k$.

Throughout this subsection, let $\alpha, \beta$ be a pair of partitions such that $|\alpha| = |\beta|$ and let $k \in \Z_{\geq 0}$.
Let $\nu_+ = \nu_+(\alpha, \beta')$ (note the conjugate) and let $\nu_-^k = \nu_-^k(\alpha, \beta')$.
\begin{prop}
    \label{prop:nu interval}
    If $\nu^k_-\not\subseteq\nu_+$, then $X_{\alpha,\beta,k} = \emptyset$.
    On the other hand, if $\nu^k_-\subseteq\nu_+$, then 
    \[
        X_{\alpha, \beta, k}
        = \bigsqcup_{\nu \in [\nu^k_-, \nu_+]}
            \bigsqcup_{\substack{\ell(\lambda) \leq k \\ |\lambda| = |\alpha| - |\nu|}}
            X_{\alpha, \beta}(\lambda, \nu),
    \]
    where $[\nu^k_-, \nu_+]$ denotes the interval in Young's lattice bounded below by $\nu^k_-$ and above by $\nu_+$.
\end{prop}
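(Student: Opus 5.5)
We will prove that whenever $X_{\alpha,\beta}(\lambda,\nu)\neq\emptyset$ with $\ell(\lambda)\le k$, the partition $\nu$ satisfies $\nu_-^k\subseteq\nu\subseteq\nu_+$. Both assertions then follow at once. If $\nu_-^k\not\subseteq\nu_+$, no $\nu$ can lie between them, so $X_{\alpha,\beta,k}=\emptyset$. Otherwise every nonempty summand is indexed by some $\nu\in[\nu_-^k,\nu_+]$. The constraint $|\lambda|=|\alpha|-|\nu|$ is automatic, since $\LR(\alpha/\nu,\lambda)$ is empty unless $|\alpha|=|\nu|+|\lambda|$. Empty summands can be added or dropped freely, which gives the displayed decomposition.

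\textbf{Upper bound.} A nonempty $\LR(\alpha/\nu,\lambda)$ requires $\nu\subseteq\alpha$, since the skew shape $\alpha/\nu$ must exist. Likewise a nonempty $\LR(\beta/\nu',\lambda)$ requires $\nu'\subseteq\beta$, that is, $\nu\subseteq\beta'$. Hence $\nu\subseteq\alpha\cap\beta'=\nu_+$.

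\textbf{Lower bound, first half.} We show $\ctrim_k(\alpha)\subseteq\nu$. Take $T\in\LR(\alpha/\nu,\lambda)$. Its columns are strictly increasing with labels in $[\ell(\lambda)]\subseteq[k]$, so each column of $\alpha/\nu$ has at most $k$ boxes. Because $\nu$ is a partition inside $\alpha$, the boxes of $\alpha/\nu$ in each column form the bottom segment of that column of $\alpha$. Having at most $k$ of them says exactly that $\nu$ contains all boxes of $\alpha$ except possibly the bottom $k$ in each column, i.e.\ $\ctrim_k(\alpha)\subseteq\nu$.

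\textbf{Lower bound, second half.} Applying the same argument to $\LR(\beta/\nu',\lambda)$ gives $\ctrim_k(\beta)\subseteq\nu'$. Conjugating, and using $\ctrim_k(\beta)'=\rtrim_k(\beta')$ (removing the bottom $k$ boxes of each column of $\beta$ corresponds to removing the rightmost $k$ boxes of each row of $\beta'$), we get $\rtrim_k(\beta')\subseteq\nu$. Combining the two halves, $\nu_-^k=\ctrim_k(\alpha)\cup\rtrim_k(\beta')\subseteq\nu$.

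\textbf{Main obstacle.} The only real content is the column-length bound: semistandard fillings with at most $k$ distinct labels have columns of length at most $k$, and this must be correctly translated into containment of the trimmed shape. We also need to handle the conjugation identity for trimmings carefully. Everything else is bookkeeping.
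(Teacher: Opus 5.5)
Your proof is correct and follows essentially the same route as the paper. The upper bound comes from $\nu\subseteq\alpha$ and $\nu'\subseteq\beta$, the lower bound from strictly increasing columns with labels in $[k]$ forcing at most $k$ boxes per column of $\alpha/\nu$ and $\beta/\nu'$, and the size condition from the content. You additionally spell out two steps the paper leaves implicit: turning the column bound into $\ctrim_k(\alpha)\subseteq\nu$, and the conjugation identity $\ctrim_k(\beta)'=\rtrim_k(\beta')$.
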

\begin{proof}
    Note that $\alpha / \nu$ and $\beta / \nu'$ are proper skew shapes only if $\nu \subseteq \alpha$ and $\nu' \subseteq \beta$, respectively.
    Therefore, $X_{\alpha, \beta}(\lambda, \nu)$ is nonempty only if $\nu \subseteq \nu_+$.
    
    Furthermore, the columns of Littlewood--Richardson tableaux are strictly increasing, and the constraint that $\ell(\lambda) \leq k$ means that tableaux of content $\lambda$ only contain labels less than or equal to $k$.
    Hence $X_{\alpha, \beta}(\lambda, \nu)$ is nonempty only if the shapes $\alpha / \nu$ and $\beta / \nu'$ each have at most $k$ boxes in each column, i.e., only if $\nu \supseteq \nu^k_-$.

    Finally, a tableau of shape $\alpha / \nu$ can have content $\lambda$ only if $|\alpha / \nu| = |\lambda|$, so $X_{\alpha, \beta}(\lambda, \nu)$ is nonempty only if $|\lambda| = |\alpha| - |\nu|$.
\end{proof}
We have shown that when $\nu^k_- \subseteq \nu_+$, the only partitions $\nu$ such that $X_{\alpha,\beta}(\lambda,\nu)$ is nonempty lie within the lattice interval $[\nu^k_-,\nu_+]$.
When $k = 1$, the interval $[\nu_-^1, \nu_+]$ is in fact trivial if and only if there exists a partition $\mu$ such that $\alpha = \mu_{(1)}$ and $\beta = (\mu')_{(1)}$, as we show in Propositions \ref{prop:nu_- = nu_+ if stab.} and \ref{prop:stab. if nu_- = nu_+}.
\begin{prop}
    \label{prop:nu_- = nu_+ if stab.}
    If $\alpha = \mu_{(k)}$ and $\beta = (\mu')_{(k)}$ for some partition $\mu$, then $\nu^k_- = \mu = \nu_+$.
\end{prop}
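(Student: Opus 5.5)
We compute the two bound partitions directly from the shapes. Write $r = r(\mu)$, so $\alpha = \mu_{(k)}$ is $\mu$ with $k$ rows of length $r$ inserted immediately below row $r$. (Since $\mu_{r+1}\le r$, the result is still a partition.) Then $\beta' = ((\mu')_{(k)})'$ is $\mu$ with $k$ columns of length $r$ inserted immediately to the right of column $r$; this uses $r(\mu') = r(\mu)$, and conjugation turns added rows into added columns. Concretely,
\begin{itemize}
\item $\alpha_i = \mu_i$ for $i\le r$, $\alpha_i = r$ for $r< i\le r+k$, and $\alpha_i = \mu_{i-k}$ for $i>r+k$;
\item $\beta'_i = \mu_i + k$ for $i\le r$, and $\beta'_i = \mu_i$ for $i>r$.
\end{itemize}
The second description follows because each of the first $r$ rows of $\mu$ has length at least $r$, so each receives $k$ new boxes; rows below $r$ have length at most $r$ and receive none.

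\emph{Upper bound.} Compute $\nu_+ = \alpha\cap\beta'$ row by row.
\begin{itemize}
\item For $i\le r$: $\min\{\mu_i, \mu_i+k\} = \mu_i$.
\item For $r<i\le r+k$: $\min\{r, \mu_i\} = \mu_i$, since $\mu_i \le r$ there.
\item For $i>r+k$: $\min\{\mu_{i-k},\mu_i\} = \mu_i$, since $\mu$ is weakly decreasing.
\end{itemize}
Hence $\nu_+ = \mu$.

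\emph{Lower bound.} We need $\nu_-^k = \ctrim_k(\alpha)\cup\rtrim_k(\beta') = \mu$.
\begin{itemize}
\item $\rtrim_k(\beta')$ has rows $\mu_i$ for $i\le r$, and rows $\max\{\mu_i-k,0\}$ for $i>r$. This is contained in $\mu$ and agrees with $\mu$ in the first $r$ rows.
\item For $\ctrim_k(\alpha)$, argue by columns using the conjugate description: $\alpha'_j = \mu'_j + k$ for $j\le r$, and $\alpha'_j = \mu'_j$ for $j>r$. So $\ctrim_k(\alpha)$ has columns $\mu'_j$ for $j\le r$ and $\max\{\mu'_j-k,0\}$ for $j>r$. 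Thus $\ctrim_k(\alpha)\subseteq\mu$, and it contains every box of $\mu$ in the first $r$ columns.
\end{itemize}
Every box of $\mu$ lies either in the first $r$ rows or in the first $r$ columns, since $\mu$ has no box $(i,j)$ with $i,j>r$ by definition of the Durfee square. Therefore the union is exactly $\mu$.

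The only step needing care is the column-length formula $\alpha'_j = \mu'_j + k$ for $j\le r$, which is really the row computation for $\beta'$ transposed. We will justify it by noting that $\alpha$ and $\beta$ play symmetric roles under conjugation, so $\alpha' = ((\mu')_{(k)})'$ with $\mu'$ in place of $\mu$, and then reusing the row computation. With that in hand, both bounds are immediate from the Durfee square decomposition, and the statement $\nu_-^k = \mu = \nu_+$ follows.
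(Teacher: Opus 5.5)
Your proof is correct and follows essentially the same route as the paper. Both arguments describe $\alpha$ and $\beta'$ as $\mu$ with $k$ extra boxes in each of the first $r(\mu)$ columns (respectively rows), deduce $\nu_+=\mu$, observe that the $k$-trims recover $\mu$ on the first $r(\mu)$ columns (respectively rows) while staying inside $\mu$, and conclude $\nu_-^k=\mu$ via the Durfee square. The only cosmetic difference is that you compute $\alpha\cap\beta'$ by explicit row-length minima, whereas the paper notes that $\alpha/\mu$ and $\beta'/\mu$ are disjoint.
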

\begin{proof}
    By the definition of $k$-stabilization, $\alpha$ is the partition obtained by adding $k$ boxes to the bottom of the first $r(\mu)$ columns of $\mu$ and $\beta' = ((\mu')_{(k)})'$ is the partition obtained by adding $k$ boxes to the right of the first $r(\mu)$ rows of $\mu$.
    \begin{figure}[h]
        \[
            \newcommand{\dmark}{\green{\star}}
            \newcommand{\amark}{\red{\circ}}
            \newcommand{\bmark}{\blue{\bullet}}
            \mu = \begin{yctableau}
                \dmark & \dmark & \ & \ \\
                \dmark & \dmark \\
                \
            \end{yctableau}
            ~~\Rightarrow~~
            \alpha = \mu_{(1)} = \begin{yctableau}
                \dmark & \dmark & \ & \ \\
                \dmark & \dmark \\
                \ & \amark \\
                \amark
            \end{yctableau},
            ~\beta = (\mu')_{(1)} = \begin{yctableau}
                \dmark & \dmark & \ \\
                \dmark & \dmark \\
                \ & \bmark \\
                \ \\
                \bmark
            \end{yctableau}
            ~~\Rightarrow~~ \beta' = \begin{yctableau}
                \dmark & \dmark & \ & \ & \bmark \\
                \dmark & \dmark & \bmark \\
                \
            \end{yctableau}
        \]
        \caption{The construction of $\alpha$ and $\beta$ from $\mu$}
        \label{fig:alpha/beta construction}
    \end{figure}
    
    Each box in $\alpha / \mu$ (marked with red circles in Figure \ref{fig:alpha/beta construction}) is necessarily below row $r(\mu)$ and within the first $r(\mu)$ columns of $\alpha$.
    Likewise, each box in $\beta' / \mu$ (marked with blue bullets) is necessarily right of the column $r(\mu)$ and within the first $r(\mu)$ rows.
    Therefore, $\alpha / \mu$ and $\beta' / \mu$ are disjoint.
    Hence $\nu_+ = \alpha \cap \beta' = \mu$.
    
    Furthermore, the first $r(\mu)$ columns of $\ctrim_k(\alpha)$ coincide with the first $r(\mu)$ columns of $\mu$ (since $\alpha / \ctrim_k(\alpha)$ and $\alpha / \mu$ coincide in those columns), and the latter columns of $\ctrim_k(\alpha)$ are strictly contained in the latter columns of $\mu$.
    Likewise, the first $r(\mu)$ rows of $\rtrim_k(\beta')$ coincide with the first $r(\mu)$ rows of $\mu$, and the latter rows of $\rtrim_k(\beta')$ are strictly contained in the latter rows of $\mu$.
    By the definition of rank, each box in $\mu$ is either in the first $r(\mu)$ rows or the first $r(\mu)$ columns; therefore, $\nu^k_- = \ctrim_k(\alpha) \cup \rtrim_k(\beta') = \mu$.
\end{proof}
\begin{theorem}
    \label{thm:dual coeff. = 1 if stab.}
    If $\alpha = \mu_{(k)}$ and $\beta = (\mu')_{(k)}$ for some partition $\mu$, then $X_{\alpha, \beta, k}$ contains exactly one element, which has sign $(-1)^{|\mu|}$.
\end{theorem}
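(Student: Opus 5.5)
By Proposition~\ref{prop:nu_- = nu_+ if stab.}, the hypothesis gives $\nu^k_- = \mu = \nu_+$. Proposition~\ref{prop:nu interval} then says that the only $\nu$ which can contribute is $\nu = \mu$. Hence
\[
X_{\alpha,\beta,k} = \bigsqcup_{\ell(\lambda)\le k,\ |\lambda| = |\alpha|-|\mu|} \LR(\alpha/\mu,\lambda)\times\LR(\beta/\mu',\lambda),
\]
and every element of this set has sign $(-1)^{|\mu|}$. It therefore suffices to show that exactly one pair $(\lambda, \chi)$ occurs. Write $r = r(\mu)$. As in the proof of Proposition~\ref{prop:nu_- = nu_+ if stab.}, the skew shape $\alpha/\mu$ consists of exactly $k$ boxes at the bottom of each of the first $r$ columns. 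These boxes form the last $k$ rows of $\alpha$ restricted to columns $1,\dots,r$, so they sit in rows $\ell(\mu)+1-?$ onward and make up a $k\times r$ block shifted to fit under $\mu$. Likewise, $\beta/\mu'$ consists of exactly $k$ boxes at the bottom of each of the first $r$ columns of $\beta$.

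Next I determine $\LR(\alpha/\mu,\lambda)$ for $\ell(\lambda)\le k$. Every column of $\alpha/\mu$ contains exactly $k$ boxes, and the labels must lie in $[k]$ and strictly increase down each column. So in every column the labels are forced to be $1,2,\dots,k$ from top to bottom. This filling is a tableau of content $(r^k)$, so $\lambda = (r^k)$ is forced.

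It remains to check that this forced filling really is a Littlewood--Richardson tableau, since only then is the count exactly one rather than zero. Rows weakly increase automatically. For columns, any two boxes of $\alpha/\mu$ in the same row and adjacent columns lie at the same height relative to the bottom of their columns, so they receive the same label. This holds because the $k$ added rows of $\alpha$ all have length exactly $r$, so the box in row $\ell(\mu)+s$ of a column among the first $r$ receives label $s$ once we shift appropriately. One caveat: the columns $j\le r$ of $\mu$ have varying lengths $\mu'_j\ge r$, so the added boxes need not lie in the same rows across columns. Still, the box $(i,j)$ of $\alpha/\mu$ receives label $i-\mu'_j$. Since $\mu'_j$ weakly decreases in $j$, labels weakly increase along rows, which is semistandardness.

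The main obstacle is the lattice condition on the reverse reading word. The approach is induction on rows. In the reverse reading word, a box labeled $s>1$ at position $(i,j)$ is preceded by the box $(i-1,j)$, which is labeled $s-1$. That box lies in an earlier row, so it is read first. Distinct boxes labeled $s$ lie in distinct columns, so this gives an injection from the $s$'s read so far into the $(s-1)$'s read earlier, which proves the lattice property. The same argument applies verbatim to $\beta/\mu'$, using the fact that $\beta = (\mu')_{(k)}$ and $r(\mu') = r$, again with forced content $(r^k)$. Hence $X_{\alpha,\beta,k}$ is a single element with sign $(-1)^{|\mu|}$.
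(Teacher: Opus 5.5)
Your proposal is correct and follows the paper's proof. Both use the two propositions to reduce to $\nu=\mu$, then note that each column of $\alpha/\mu$ (and of $\beta/\mu'$) has exactly $k$ boxes, which forces the labels $1,\dots,k$ and the content $(r^k)$. Your label formula $i-\mu'_j$ (for semistandardness) and the injection $(i,j)\mapsto(i-1,j)$ (for the lattice condition) supply verification that the paper only asserts.

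You should delete the earlier remarks that $\alpha/\mu$ is a shifted $k\times r$ block and that horizontally adjacent boxes receive equal labels. These are false in general: for $\mu=(4,3,1)$ and $k=2$, row $4$ of $\alpha/\mu$ carries labels $1,2$. Your later argument supersedes them anyway.
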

\begin{proof}
    By Proposition \ref{prop:nu_- = nu_+ if stab.}, $\nu^k_- = \mu = \nu_+$, and so by Proposition \ref{prop:nu interval},
    \[
        X_{\alpha, \beta, k}
        = \bigsqcup_{\substack{\ell(\lambda) \leq k \\ |\lambda| = |\alpha| - |\mu|}}
            X_{\alpha, \beta}(\lambda, \mu).
    \]
    Note that since $\alpha = \mu_{(k)}$, the skew shape $\alpha / \mu$ has $r(\mu)$ columns with $k$ boxes in each column.
    Filling the $i$-th box in each column with $i$ gives the unique Littlewood--Richardson tableau with shape $\alpha/\mu$ and entries of size at most $k$.
    Likewise, there is exactly one Littlewood--Richardson tableau with shape $\beta/\mu'$ and entries of size at most $k$.
\end{proof}

\subsection{The involution case}
This subsection proves Theorem~\ref{thm:k = 1 solution}, solving the $k=1$ case of Problem~\ref{prob:dual}(2). Let $\alpha, \beta$ be a pair of partitions such that $|\alpha| = |\beta|$.
Let $\nu_+ = \nu_+(\alpha, \beta')$ and let $\nu_-^1 = \nu_-^1(\alpha, \beta')$.
\begin{lemma}
    \label{lem:dual coeffs. k = 1}
    Let $\nu \in [\nu^1_-, \nu_+]$.
    Then there exists a bijection between the interval $[\nu^1_-, \nu_+]$ and the set $X_{\alpha, \beta, 1}$.
\end{lemma}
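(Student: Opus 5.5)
We will show that for each $\nu\in[\nu^1_-,\nu_+]$ there is exactly one pair $(\lambda,\chi)$ with $\ell(\lambda)\le 1$ and $\chi\in X_{\alpha,\beta}(\lambda,\nu)$. Together with Proposition~\ref{prop:nu interval}, this gives the bijection $\nu\mapsto\chi$. (If $\nu^1_-\not\subseteq\nu_+$, both sides are empty and there is nothing to prove, so we assume $\nu^1_-\subseteq\nu_+$.)

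Since $\ell(\lambda)\le 1$, we have $\lambda=(s)$ with $s=|\alpha|-|\nu|$, forced by the size condition in Proposition~\ref{prop:nu interval}. Note that $|\alpha|=|\beta|$ and $|\nu|=|\nu'|$, so also $|\beta/\nu'|=s$. By the Pieri rule (Theorem~\ref{thm:Pieri rule}), $c^\alpha_{\nu,(s)}\in\{0,1\}$, and it equals $1$ exactly when $\alpha/\nu$ is a horizontal strip. Likewise $c^\beta_{\nu',(s)}=1$ exactly when $\beta/\nu'$ is a horizontal strip, which is the same as $\beta'/\nu$ being a vertical strip. Hence $|X_{\alpha,\beta}((s),\nu)|\le 1$, and the whole statement reduces to checking that both strip conditions hold for every $\nu$ in the interval. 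When they hold, the unique element is the pair of all-$1$ fillings.

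\textbf{Verifying the strip conditions.} Since $\nu\supseteq\nu^1_-\supseteq\ctrim_1(\alpha)$, the shape $\alpha/\nu$ lies inside $\alpha/\ctrim_1(\alpha)$. The latter has at most one box per column, so $\alpha/\nu$ is a horizontal strip. Symmetrically, $\nu\supseteq\rtrim_1(\beta')$ gives that $\beta'/\nu$ has at most one box per row. This is precisely the column-strictness argument already used in the proof of Proposition~\ref{prop:nu interval}, specialized to $k=1$. One should also confirm $\nu\subseteq\alpha$ and $\nu'\subseteq\beta$, which follows from $\nu\subseteq\nu_+=\alpha\cap\beta'$. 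Lemma~\ref{lem:nu corners} is not needed for the bijection itself, but it identifies the interval as a subset lattice, which will be useful for the later involution.

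\textbf{Main obstacle.} The only delicate point is bookkeeping: making sure that distinct $\nu$ give disjoint pieces, which holds because the union in Proposition~\ref{prop:nu interval} is disjoint, and that no other $\lambda$ contributes. For the latter, $\ell(\lambda)\le1$ together with the size constraint pins down $\lambda$ uniquely. Also, $s=0$ is allowed, giving $\lambda=\emptyset$ and empty tableaux, which is consistent with the above.
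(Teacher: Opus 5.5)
Your proposal is correct and follows essentially the same route as the paper. The size constraint from Proposition~\ref{prop:nu interval} forces $\lambda=(|\alpha|-|\nu|)$; the containment $\nu\supseteq\nu^1_-$ makes $\alpha/\nu$ a horizontal strip and $\beta'/\nu$ a vertical strip; the Pieri rules then give exactly one element of $X_{\alpha,\beta,1}$ for each $\nu$ in the interval. Your bookkeeping is, if anything, a bit more careful than the paper's: you write the second coefficient correctly as $c^\beta_{\nu',(s)}$ and pass to $\beta'/\nu$ by conjugation, and you note explicitly that both sides are empty when $\nu^1_-\not\subseteq\nu_+$.
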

\begin{proof}
    Let $\lambda$ be a row partition.
    By Proposition \ref{prop:nu interval}, $C_{\alpha, \beta}(\lambda, \nu) \neq 0$ only if $|\lambda| = |\alpha| - |\nu|$.
    Furthermore, $\nu \supseteq \nu^1_- = \ctrim_1(\alpha) \cup \rtrim_1(\beta')$.
    Hence $\ctrim_1(\alpha) \subseteq \nu$, so $\alpha / \nu \subseteq \alpha / \ctrim_1(\alpha)$.
    And by construction, $\alpha / \ctrim_1(\alpha)$ is a horizontal strip, so $\alpha / \nu$ must be a horizontal strip as well.
    Therefore, if $|\lambda| = |\alpha| - |\nu|$, by the Pieri rule (Theorem \ref{thm:Pieri rule}), $c^\alpha_{\nu, \lambda} = 1$.
    Likewise, $\beta' / \rtrim_1(\beta')$ is a vertical strip, so $\beta' / \nu$ must be a vertical strip as well.
    Therefore, if $|\lambda| = |\beta| - |\nu|$, by the dual Pieri rule (Theorem \ref{thm:dual Pieri rule}), $c^{\beta'}_{\nu, \lambda} = 1$.

    For all $\nu$, we have shown that if $\lambda$ is the row partition $(|\alpha| - |\nu|)$, then $C_{\alpha, \beta}(\lambda, \nu) = 1$, and otherwise, $C_{\alpha, \beta}(\lambda, \nu) = 0$.
    Mapping each $\nu$ to the unique element of $X_{\alpha,\beta}((|\alpha|-|\nu|), \nu)$ thus gives the desired bijection.
\end{proof}
\begin{prop}
    \label{prop:stab. if nu_- = nu_+}
    If $\nu^1_- = \nu_+$, then $\alpha = (\nu_+)_{(1)}$ and $\beta = (\nu_+')_{(1)}$.
\end{prop}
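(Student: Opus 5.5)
The plan is to apply the technical lemmas of Section~\ref{sec:tech} with $\lambda = \alpha$ and $\mu = \beta'$, and to read off the shapes of $\alpha$ and $\beta'$ relative to $\nu_+$. Throughout, write $r = r(\nu_+)$.

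First I will dispose of the degenerate case $\nu_+ = \emptyset$. Then $\nu_-^1 = \emptyset$ forces $\ctrim_1(\alpha) = \emptyset$, so $\alpha$ has at most one row. It also forces $\rtrim_1(\beta') = \emptyset$, so $\beta'$ has at most one column. Since $\alpha \cap \beta' = \emptyset$, at least one of $\alpha$, $\beta'$ is empty, and then $|\alpha| = |\beta|$ makes both empty. This agrees with $(\emptyset)_{(1)} = \emptyset$.

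Now assume $\nu_+ \neq \emptyset$, so the corner set $C$ is nonempty and the indices $u, v$ of Lemma~\ref{lem:* if nu_- = nu_+} exist. By Lemma~\ref{lem:stab. shapes if nu_- = nu_+}:
\begin{itemize}
\item $\alpha$ is obtained from $\nu_+$ by adding exactly one box at the bottom of each of the first $j_u$ columns;
\item $\beta'$ is obtained from $\nu_+$ by adding exactly one box at the right end of each of the first $i_v$ rows.
\end{itemize}
Comparing sizes via $|\alpha| = |\beta| = |\beta'|$ gives $j_u = i_v$; call this common value $s$.

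Next I will use the observation already made in the proof of Proposition~\ref{prop:nu_- = nu_+ if stab.}. Adding a row of length $r(\mu)$ to $\mu$ is the same as adding one box to the bottom of each of the first $r(\mu)$ columns. Dually, $((\mu')_{(1)})'$ is $\mu$ with one box added to the right of each of the first $r(\mu)$ rows. So the whole proposition reduces to showing $s = r$. This is the main step, and I will prove it using Lemma~\ref{lem:* if nu_- = nu_+}(3).

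If $u = v$, then $\mathsf c_u = (i_v, j_u) = (s,s)$ is a corner of $\nu_+$. Hence $(s,s) \in \nu_+$ and $(s+1,s+1) \notin \nu_+$, which gives $r = s$.

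If $u = v-1$, then $\mathsf c_u = (i_u, s)$ and $\mathsf c_v = (s, j_v)$ are consecutive corners with $i_u > s$ and $j_v > s$. Because column $s$ has length at least $i_u > s$, the box $(s,s)$ lies in $\nu_+$. As in the proof of Lemma~\ref{lem:* if nu_- = nu_+}, the box $\mathsf d_u = (i_v+1, j_u+1) = (s+1,s+1)$ is not in $\nu_+$. Hence again $r = s$.

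With $s = r$, the description above gives exactly $\alpha = (\nu_+)_{(1)}$. It also gives $\beta' = ((\nu_+')_{(1)})'$, that is, $\beta = (\nu_+')_{(1)}$.

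The only delicate point I expect is bookkeeping: confirming that the two ``add one box per column/row'' descriptions really coincide with $1$-stabilization when $s = r$. In particular, adding a box below each of the first $r$ columns must yield a valid partition, whose new row $r+1$ has length $r$ after reordering. This holds because the rows of $\nu_+$ below row $r$ have length less than $r+1$; the stabilized row, of length $r$, is inserted there, and the remaining rows shift down by one.
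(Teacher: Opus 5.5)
Your argument follows the paper's own proof of this proposition. You apply Lemma~\ref{lem:stab. shapes if nu_- = nu_+} with $\lambda=\alpha$, $\mu=\beta'$, count boxes to get $j_u=i_v$, and use Lemma~\ref{lem:* if nu_- = nu_+}(3) to get $r(\nu_+)=j_u$. Your treatment of $\nu_+=\emptyset$ and your split into $u=v$ and $u=v-1$ are fine.

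\textbf{The gap.} The problem is in what you import, and the paper's proof has the same gap. Lemma~\ref{lem:stab. shapes if nu_- = nu_+}(1) is false. Its proof claims that $\lambda/\nu_+$ has no boxes in columns $j>j_u$, but this does not follow from Lemma~\ref{lem:* if nu_- = nu_+}(1),(4): a non-corner edge box right of column $j_u$ can be shadowed downwards.

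Concretely, take $\alpha=(3,3,2,1)$ and $\beta=(3,2,2,2)$, so $|\alpha|=|\beta|=9$ and $\beta'=(4,4,1)$.
\begin{itemize}
\item Then $\nu_+=(3,3,1)$ and $\nu_-^1=(3,2,1)\cup(3,3)=\nu_+$.
\item The corners are $\mathsf c_1=(3,1)$, shadowed only downwards, and $\mathsf c_2=(2,3)$, shadowed only rightwards. So $u=1$, $v=2$, $j_u=1$, $i_v=2$.
\item Yet $\alpha/\nu_+=\{(4,1),(3,2)\}$, where $(3,2)$ lies below the edge box $(2,2)$.
\end{itemize}
Your step ``comparing sizes gives $j_u=i_v$'' would therefore assert $1=2$, even though the proposition itself holds here with $r(\nu_+)=2$.

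Separately, Lemma~\ref{lem:* if nu_- = nu_+} need not produce $u\in[q]$. For $\alpha=(2,1)$, $\beta=(1,1,1)$ we get $\nu_-^1=\nu_+=(2)$, whose only corner is not shadowed downwards. So your sentence ``the indices $u,v$ exist'' is also unjustified.

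\textbf{A repair.} The proposition can be proved without corner indices. Write $\nu=\nu_+$, with row lengths $\nu_i$ and column lengths $\nu'_j$. By Lemma~\ref{lem:everything is shadowed}(1), each column of $\alpha/\nu$ and each row of $\beta'/\nu$ has at most one box. Each corner of $\nu$ lies in $\nu_-^1$, so it is shadowed downwards or rightwards.

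First, the columns meeting $\alpha/\nu$ form an initial segment $\{1,\dots,p\}$. Suppose column $j>1$ meets it, in the box $(\nu'_j+1,j)$. Either $\nu'_{j-1}=\nu'_j$, and then $(\nu'_j+1,j-1)\in\alpha/\nu$ directly. Or $(\nu'_{j-1},j-1)$ is a corner of $\nu$. That corner cannot be shadowed rightwards, since this would put $(\nu'_j+1,j)$ in $\alpha\cap\beta'$ outside $\nu$; hence it is shadowed downwards. Symmetrically, the rows meeting $\beta'/\nu$ form $\{1,\dots,p'\}$, and $|\alpha|=|\beta|$ gives $p=p'=:s$.

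Next, $r(\nu)=s$:
\begin{itemize}
\item If $(s+1,s+1)\in\nu$, then a corner of $\nu$ weakly southeast of it (one exists) would be shadowed in neither direction. This is a contradiction.
\item If $s\geq 1$ and $\nu_s<s$, put $j=\nu_s+1\leq s$. Row $s$ meets $\beta'/\nu$ in $(s,j)$. Column $j$ meets $\alpha/\nu$ in $(\nu'_j+1,j)$, and $\nu'_j<s$ because $(s,j)\notin\nu$. So this box also lies in $\beta'$, contradicting $\alpha\cap\beta'=\nu$.
\end{itemize}
With $s=r(\nu_+)$, your closing bookkeeping yields $\alpha=(\nu_+)_{(1)}$ and $\beta=(\nu_+')_{(1)}$. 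The case $\nu_+=\emptyset$ is just $s=0$.
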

\begin{proof}
    Let $C$, $u$, and $v$ be as in Lemma \ref{lem:stab. shapes if nu_- = nu_+} (taking $\lambda = \alpha$ and $\mu = \beta'$).
    Then by Lemma \ref{lem:stab. shapes if nu_- = nu_+}, $\alpha / \nu_+$ consists of exactly one box in each of the first $j_u$ columns and $\beta' / \nu_+$ consists of exactly one box in each of the first $i_v$ rows.
    Hence $|\alpha / \nu_+| = j_u$ and $|\beta / \nu_+| = i_v$.
    But by assumption, $|\alpha| = |\beta|$; therefore, $j_u = i_v$.
    
    Note that $\mathsf c_u$ and $\mathsf c_v$ are boxes in column $j_u$ and row $i_v$ of $\nu_+$, respectively; therefore, $r(\nu_+) \geq j_u = i_v$.
    But by Lemma \ref{lem:* if nu_- = nu_+}(3), either $u = v$ or $u = v-1$, that is, the corners $\mathsf c_u$ and $\mathsf c_v$ are either identical or consecutive.
    Therefore, the box $(i_v + 1, j_u + 1)$ is not contained in $\nu_+$, and so we conclude that $r(\nu_+) = j_u$.

    To restate, we have found that $\alpha / \nu_+$ consists of exactly one box in each of the first $r(\nu_+)$ columns.
    In other words, $\alpha$ is obtained by adding a row of length $r(\nu_+)$ to $\nu_+$, that is, $\alpha = (\nu_+)_{(1)}$.
    Likewise, $\beta' / \nu_+$ consists of exactly one box in each of the first $r(\nu_+)$ rows.
    Therefore, $\beta$ is obtained by adding a row of length $r(\nu_+)$ to $\nu_+'$, that is, $\beta = (\nu_+')_{(1)}$.
\end{proof}
\begin{theorem}
    \label{thm:k = 1 solution}
    Suppose that there does not exist a partition $\mu$ such that $\alpha = \mu_{(1)}$ and $\beta = (\mu')_{(1)}$.
    Then there exists a sign-reversing involution on $X_{\alpha, \beta, 1}$.
\end{theorem}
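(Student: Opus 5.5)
The plan is to use Lemma~\ref{lem:dual coeffs. k = 1} and Lemma~\ref{lem:nu corners} to turn the problem into a sign-reversing involution on a Boolean lattice. There are three cases.

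\emph{Case 1: $\nu^1_- \not\subseteq \nu_+$.} By Proposition~\ref{prop:nu interval}, $X_{\alpha,\beta,1}=\emptyset$, and the empty involution works.

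\emph{Case 2: $\nu^1_- \subseteq \nu_+$.} Lemma~\ref{lem:dual coeffs. k = 1} gives a bijection $\nu\mapsto\chi_\nu$ from the interval $[\nu^1_-,\nu_+]$ onto $X_{\alpha,\beta,1}$. Here $\chi_\nu$ is the unique pair of Pieri-type Littlewood--Richardson tableaux in $X_{\alpha,\beta}((|\alpha|-|\nu|),\nu)$, and it has sign $(-1)^{|\nu|}$. So it suffices to build an involution on the interval itself that changes $|\nu|$ by exactly one, and then transport it through the bijection. By Lemma~\ref{lem:nu corners}, every box of $\nu_+/\nu^1_-$ is a corner box of $\nu_+$. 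Hence the interval is the Boolean lattice of subsets of this set of corner boxes: each $\nu$ in it is $\nu_+$ with some subset $S$ of these corners removed.

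\emph{Case 3: the interval is nontrivial.} By Proposition~\ref{prop:stab. if nu_- = nu_+} and our hypothesis, $\nu^1_-\neq\nu_+$: if they were equal, we would have $\alpha=(\nu_+)_{(1)}$ and $\beta=(\nu_+')_{(1)}$, contradicting the assumption with $\mu=\nu_+$. So $\nu_+/\nu^1_-$ contains at least one box. Fix a distinguished box $\mathsf b_0$ of $\nu_+/\nu^1_-$ canonically, say the bottom-leftmost corner in it. Define $\iota(\nu)=\nu\cup\{\mathsf b_0\}$ if $\mathsf b_0\notin\nu$, and $\iota(\nu)=\nu\setminus\{\mathsf b_0\}$ if $\mathsf b_0\in\nu$. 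We must check that both results are partitions lying in the interval. This holds because $\mathsf b_0$ is a corner of $\nu_+$ and every $\nu$ in the interval contains $\nu^1_-$. Removing other corners of $\nu_+$ never obstructs adding or removing $\mathsf b_0$, since distinct corners of $\nu_+$ lie in distinct rows and columns, and the boxes immediately above and left of $\mathsf b_0$ lie in $\nu^1_-\cup(\text{non-corner boxes of }\nu_+)$. The map $\iota$ is clearly an involution, and it changes $|\nu|$ by one, so it reverses sign. Finally, set the involution on $X_{\alpha,\beta,1}$ to be $\chi_\nu\mapsto\chi_{\iota(\nu)}$.

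The main point needing care is the lattice-structure claim underlying toggling: that the boxes above and left of a box in $\nu_+/\nu^1_-$ never themselves lie in $\nu_+/\nu^1_-$. This follows because such boxes are not corners of $\nu_+$, and by Lemma~\ref{lem:nu corners} every box of $\nu_+/\nu^1_-$ is a corner of $\nu_+$. For an explicit description, I would also spell out that the resulting map on tableau pairs moves the single box $\mathsf b_0$ between the skew shapes $\alpha/\nu$ and $\beta/\nu'$ (and the corresponding conjugate box). In $\alpha/\nu$ the box is filled with a $1$, its column remains a horizontal strip, and the row partition $\lambda$ changes length by one. This is what makes the involution explicit on Littlewood--Richardson tableaux, as claimed in Theorem~\ref{thm:main}.
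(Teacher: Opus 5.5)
Your proof is correct and follows the paper's argument essentially step for step. If $\nu^1_-\not\subseteq\nu_+$ the set is empty. Otherwise Proposition~\ref{prop:stab. if nu_- = nu_+} and the hypothesis force $\nu^1_-\neq\nu_+$, and toggling a fixed box of $\nu_+/\nu^1_-$ (a corner of $\nu_+$ by Lemma~\ref{lem:nu corners}) is transported through the bijection of Lemma~\ref{lem:dual coeffs. k = 1}.

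One small correction to your closing remark: toggling $\mathsf b_0$ does not move a box between $\alpha/\nu$ and $\beta/\nu'$. It simultaneously adds (or removes) $\mathsf b_0$ in $\alpha/\nu$ and its transpose in $\beta/\nu'$, so it is the size $|\lambda|$, not the length $\ell(\lambda)$, that changes by one.
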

\begin{proof}
    Suppose that $\nu^1_- \not\subseteq \nu_+$.
    Then by Proposition \ref{prop:nu interval}, $X_{\alpha, \beta, 1}$ is empty, and so there exists a trivial involution.

    Now suppose that $\nu^1_- \subseteq \nu_+$.
    Then by the contrapositive of Proposition \ref{prop:stab. if nu_- = nu_+}, $\nu^1_- \neq \nu_+$.
    So we can choose a box $\mathsf c$ from the skew shape $\nu_+ / \nu^1_-$.
    By Lemma \ref{lem:nu corners}, every box in $\nu_+ / \nu^1_-$ is a corner box, so adding or removing $\mathsf c$ from a partition $\nu \in [\nu^1_-, \nu_+]$ will always yield another valid partition.
    Therefore, the map $\toggle_{\mathsf c} \colon [\nu^1_-, \nu_+] \leftrightarrow [\nu^1_-, \nu_+]$ which either adds or removes $\mathsf c$ is an involution.

    Furthermore, by Lemma \ref{lem:dual coeffs. k = 1}, there exists a bijection $\phi \colon [\nu^1_-, \nu_+] \to X_{\alpha, \beta, 1}$.
    Since $\nu$ and $\toggle_{\mathsf c}(\nu)$ differ by exactly one box, $\phi(\nu)$ and $\phi(\toggle_{\mathsf c}(\nu))$ are of opposite sign.
    Therefore, we can construct our desired sign-reversing involution by mapping $\phi(\nu)$ to $\phi(\toggle_{\mathsf c}(\nu))$.
\end{proof}

\section{Proof of the \texorpdfstring{$k=\min\{m, n\}-1$}{submaximal} case}
\label{sec:classical Cauchy}
The goal of this section is to solve the $k=\min\{m, n\}-1$ case of Problem~\ref{prob:main}. 
Mimicking the strategy of Section~\ref{sec:dual Cauchy}, we begin by using the classical Cauchy identity \eqref{eqn:classical Cauchy} to derive Problem~\ref{prob:classical}, another combinatorial reformulation of the generalized Cauchy identity \eqref{eqn:gen. Cauchy}.

\begin{definition}
    Let $\alpha, \beta$ be a pair of partitions such that $|\alpha| = |\beta|$.
    Then we define
    \[
        X'_{\alpha, \beta, k}(\mu, \nu)
        = \LR(\alpha / \nu, \mu_{(k)}) \times \LR(\beta / \nu, (\mu')_{(k)}),
        \quad
        X'_{\alpha, \beta, k}
        = \bigsqcup_{\substack{\mu, \nu \\ \mu \subseteq [m - k] \times [n - k]}} X'_{\alpha, \beta, k}(\mu, \nu).
    \]
    Let the \emph{sign} of $\chi \in X_{\alpha, \beta, k}(\mu, \nu)$ be $\sgn(\chi) = (-1)^{|\mu|}$.
    Then we define
    \begin{equation}
        \begin{split}
            C'_{\alpha, \beta, k}(\mu, \nu)
            &= |X'_{\alpha, \beta, k}(\mu, \nu)|
            = c^\alpha_{\nu, \mu_{(k)}}c^\beta_{\nu, (\mu')_{(k)}}, \\
            C'_{\alpha, \beta, k}
            &= \sum_{\chi \in X'_{\alpha, \beta, k}} \sgn(\chi)
            = \sum_{\substack{\mu, \nu \\ \mu \subseteq [m - k] \times [n - k]}} (-1)^{|\mu|} C_{\alpha, \beta, k}(\mu, \nu).
        \end{split}
    \end{equation}
\end{definition}
\begin{theorem}
    \label{thm:classical Cauchy drv.}
    The generalized Cauchy identity \eqref{eqn:gen. Cauchy} is equivalent to the statement that
    \begin{equation}
        \label{eqn:classical coeffs.}
        C'_{\alpha, \beta, k} = \begin{cases}
            1 & \text{if $\alpha = \beta$ and $\ell(\alpha) \leq k$}, \\
            0 & \text{otherwise}.
        \end{cases}
    \end{equation}
\end{theorem}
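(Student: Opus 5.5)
We mirror the proof of Theorem~\ref{thm:dual Cauchy drv.}, this time clearing the Cauchy kernel through the classical Cauchy identity \eqref{eqn:classical Cauchy} rather than the dual one.

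First, we rewrite the right-hand side of \eqref{eqn:gen. Cauchy}. The factor $1/\prod_{(i,j)\in[m]\times[n]}(1-x_iy_j)$ equals $\sum_{\nu}s_\nu(\vec x)s_\nu(\vec y)$ by \eqref{eqn:classical Cauchy}. Here $\nu$ may range over all partitions, since $s_\nu(\vec x)s_\nu(\vec y)$ vanishes when $\ell(\nu)>\min\{m,n\}$ by Remark~\ref{rmk:Schur vanishing}. The right-hand side therefore becomes the product
\[
\bigg[\sum_{\nu}s_\nu(\vec x)s_\nu(\vec y)\bigg]\bigg[\sum_{\mu\subseteq[m-k]\times[n-k]}(-1)^{|\mu|}s_{\mu_{(k)}}(\vec x)s_{(\mu')_{(k)}}(\vec y)\bigg].
\]

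Second, we apply the Littlewood--Richardson rule (Theorem~\ref{thm:LR rule}) separately in the $\vec x$ and $\vec y$ variables. This gives $s_\nu(\vec x)s_{\mu_{(k)}}(\vec x)=\sum_\alpha c^\alpha_{\nu,\mu_{(k)}}s_\alpha(\vec x)$, and the analogous expansion in $\vec y$ with $(\mu')_{(k)}$ and $\beta$. Collecting terms, the right-hand side equals
\[
\sum_{\alpha,\beta}\Bigg(\sum_{\mu\subseteq[m-k]\times[n-k],\ \nu}(-1)^{|\mu|}c^\alpha_{\nu,\mu_{(k)}}c^\beta_{\nu,(\mu')_{(k)}}\Bigg)s_\alpha(\vec x)s_\beta(\vec y).
\]
The inner coefficient is exactly $C'_{\alpha,\beta,k}$ by its definition.

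Third, the left-hand side $\sum_{\ell(\lambda)\le k}s_\lambda(\vec x)s_\lambda(\vec y)$ is already expanded in the basis $\{s_\alpha(\vec x)s_\beta(\vec y)\}$. Its coefficient is $1$ when $\alpha=\beta$ and $\ell(\alpha)\le k$, and $0$ otherwise. Both sides are now expansions in the products of Schur polynomials, which are linearly independent (as in the proof of Theorem~\ref{thm:dual Cauchy drv.}, citing \cite[Corollary 7.10.6]{ECII}). Comparing coefficients shows that \eqref{eqn:gen. Cauchy} is equivalent to \eqref{eqn:classical coeffs.}. Each step above is an identity, so the argument reverses, and both directions of the equivalence hold.

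The main obstacle is bookkeeping with finitely many variables. Only pairs $(\alpha,\beta)$ with $\ell(\alpha)\le m$ and $\ell(\beta)\le n$ give nonzero, linearly independent products. For pairs of greater length, both sides contribute terms that vanish identically, so they impose no condition. We read \eqref{eqn:classical coeffs.} either for the indices where the Schur polynomials are nonzero, or in the ring of symmetric functions in infinitely many variables, where all the identities used continue to hold. We follow the same convention that the proof of Theorem~\ref{thm:dual Cauchy drv.} implicitly adopts.

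A secondary point is convergence: the product of the two series must be well defined. This holds because each bidegree receives only finitely many contributions, since $\mu$ ranges over the finite set $[m-k]\times[n-k]$.
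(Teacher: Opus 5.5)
Your proposal is correct and follows essentially the same route as the paper. You rewrite $1/\prod_{i,j}(1-x_iy_j)$ via the classical Cauchy identity, expand with the Littlewood--Richardson rule in $\vec x$ and $\vec y$ separately, and compare coefficients of $s_\alpha(\vec x)s_\beta(\vec y)$, using Remark~\ref{rmk:Schur vanishing} to dispose of pairs with $\ell(\alpha)>m$ or $\ell(\beta)>n$.

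One correction to your bookkeeping paragraph: only your first reading is valid, namely restricting to $\ell(\alpha)\le m$ and $\ell(\beta)\le n$, as the paper does in Section~\ref{sec:classical Cauchy}. The infinitely-many-variables reading fails as stated. The range $\mu\subseteq[m-k]\times[n-k]$ in \eqref{eqn:gen. Cauchy} and in $C'_{\alpha,\beta,k}$ depends on $m,n$. If you keep that restriction, \eqref{eqn:classical coeffs.} is genuinely false for long partitions: for instance $m=n=1$, $k=0$, $\alpha=(1,1)$, $\beta=(2)$ gives $C'_{\alpha,\beta,0}=-1$. That reading becomes correct only if you also let $\mu$ range over all partitions. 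This changes nothing in $m$ and $n$ variables, because any $\mu\not\subseteq[m-k]\times[n-k]$ has $\ell(\mu_{(k)})>m$ or $\ell((\mu')_{(k)})>n$, so its term vanishes by Schur vanishing.
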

\begin{proof}
    Apply the classical Cauchy identity \eqref{eqn:classical Cauchy} to rewrite the right side of \eqref{eqn:gen. Cauchy}:
    \[
        \sum_{\ell(\lambda) \leq k} s_\lambda(\vec x)s_\lambda(\vec y)
        = \bigg[\sum_\nu s_\nu(\vec x)s_\nu(\vec y)\bigg]\bigg[
            \sum_{\mu \subseteq [m - k] \times [n - k]} (-1)^{|\mu|}
            s_{\mu_{(k)}}(\vec x)s_{(\mu')_{(k)}}(\vec y)
        \bigg].
    \]
    Then apply the Littlewood--Richardson rule (\ref{eqn:LR rule}) twice on the right-hand side:
    \[
        \sum_{\ell(\lambda) \leq k} s_\lambda(\vec x)s_\lambda(\vec y)
        = \sum_{\alpha, \beta} \Bigg[
            \bigg(
                \sum_{\substack{\mu, \nu \\ \mu \subseteq [m - k] \times [n - k]}}
                (-1)^{|\mu|}c^\alpha_{\nu, \mu_{(k)}}c^\beta_{\nu, (\mu')_{(k)}}
            \bigg)
            s_\alpha(\vec x)s_\beta(\vec y)
        \Bigg].
    \]
    Note that, by Remark \ref{rmk:Schur vanishing}, $s_\alpha(\vec x) = 0$ if $\ell(\alpha) > m$ and $s_\beta(\vec y) = 0$ if $\ell(\beta) > n$.

    As in the proof of Theorem \ref{thm:dual Cauchy drv.}, the Schur polynomials form a linear basis of the space of symmetric functions, so the coefficients of any two decompositions must be equal.
\end{proof}
By Theorem \ref{thm:classical Cauchy drv.}, to solve Problem \ref{prob:main} it suffices to solve the following problem:
\begin{problem}
    \label{prob:classical}
    Let $\alpha, \beta$ be a pair of partitions such that $|\alpha|=|\beta|$ and let $k \in \Z_{\geq 0}$.
    \begin{enumerate}
    \item
        Prove that if $\alpha = \beta$ and $\ell(\alpha) \leq k$, then $X'_{\alpha, \beta, k}$ contains exactly one element, which has sign $1$.
    \item Otherwise, construct a sign-reversing involution on $X'_{\alpha, \beta, k}$.
    \end{enumerate}
\end{problem}
The following subsections solve Problem~\ref{prob:classical}(1) for all $k$ and Problem~\ref{prob:classical}(2) when $k=\min\{m, n\}-1$.

\subsection{The non-involution case}
Problem~\ref{prob:classical}(1) turns out to be simpler than Problem~\ref{prob:dual}(1); it is solved (for all $k$) by the following proposition.
\begin{prop}
    \label{prop:classical coeffs. = 1 if equal}
    Let $\alpha$ be a partition such that $\ell(\alpha) \leq k$.
    Then
    \[
        C'_{\alpha, \alpha, k}(\mu, \nu) = \begin{cases}
            1 & \text{if $\mu = \emptyset$ and $\alpha = \nu$,} \\
            0 & \text{otherwise.}
        \end{cases}
    \]
\end{prop}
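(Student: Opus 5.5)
The plan is to show that for any $\mu$ with $\mu\neq\emptyset$, or for $\mu=\emptyset$ with $\nu\neq\alpha$, one of the two Littlewood--Richardson coefficients in
\[
C'_{\alpha,\alpha,k}(\mu,\nu)=c^\alpha_{\nu,\mu_{(k)}}\,c^\alpha_{\nu,(\mu')_{(k)}}
\]
vanishes, and then to check the remaining case directly.

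The main tool is a containment/length argument. Whenever $c^\alpha_{\nu,\rho}\neq 0$, the set $\LR(\alpha/\nu,\rho)$ is nonempty, so $\rho\subseteq\alpha$ and hence $\ell(\rho)\leq\ell(\alpha)$. I would justify this containment by the symmetry $c^\alpha_{\nu,\rho}=c^\alpha_{\rho,\nu}$, which follows from commutativity of the product in Theorem~\ref{thm:LR rule}. Alternatively, it follows directly: Lemma~\ref{lem:LR distinct rows} together with column strictness forces the first occurrence of label $i$ to lie in row at least $i$, so $\ell(\rho)\le\ell(\alpha)$.

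Now suppose $\mu\neq\emptyset$. Then $r(\mu)\geq 1$, so the $k$-stabilization $\mu_{(k)}$ has length $\ell(\mu)+k\geq k+1>\ell(\alpha)$. By the paragraph above, $c^\alpha_{\nu,\mu_{(k)}}=0$, and therefore $C'_{\alpha,\alpha,k}(\mu,\nu)=0$. The length bound is the key point, and it is clean because stabilization of a nonempty partition adds exactly $k$ nonempty rows.

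Next suppose $\mu=\emptyset$. Then $r(\mu)=0$, so $\mu_{(k)}=(\mu')_{(k)}=\emptyset$, since we add $k$ rows of length $0$. We have $c^\alpha_{\nu,\emptyset}=1$ if $\nu=\alpha$ and $0$ otherwise, because $\LR(\alpha/\nu,\emptyset)$ is nonempty only when the skew shape $\alpha/\nu$ is empty. The product is therefore $1$ exactly when $\nu=\alpha$. We also need $\emptyset\subseteq[m-k]\times[n-k]$, which holds trivially.

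The only step requiring care is the implication $c^\alpha_{\nu,\rho}\neq0\Rightarrow\ell(\rho)\le\ell(\alpha)$. I will take the direct tableau route. The lattice condition, column strictness, and the fact that labels $1,\dots,\ell(\rho)$ all appear show that label $\ell(\rho)$ sits in row at least $\ell(\rho)$ of $\alpha$. Applying Proposition~\ref{prop:classical coeffs. = 1 if equal}'s hypothesis $\ell(\alpha)\le k$ then gives the result. As a consequence, $X'_{\alpha,\alpha,k}$ has the single element $(\emptyset$-tableau$,\emptyset$-tableau$)$ of sign $+1$.
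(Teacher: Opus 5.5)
Your proof is correct and follows essentially the same route as the paper. For $\mu\neq\emptyset$, the content $\mu_{(k)}$ has at least $k+1$ rows, and Lemma~\ref{lem:LR distinct rows} forces the first occurrences of labels $1,\dots,k+1$ into distinct rows, which is impossible in $\alpha/\nu$ since $\ell(\alpha)\le k$; the case $\mu=\emptyset$ reduces to the empty tableau. You are slightly more careful than the paper in stating explicitly that $c^\alpha_{\nu,\emptyset}=0$ when $\nu\neq\alpha$.
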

\begin{proof}
    Suppose that $\mu = \emptyset$.
    Then $\mu_{(k)} = (\mu')_{(k)} = \emptyset$, and the trivial tableau of weight $1$ has shape $\alpha / \alpha$.

    Now suppose that $\mu \neq \emptyset$.
    Then $\mu_{(k)}$ and $(\mu')_{(k)}$ both have at least $k + 1$ rows.
    Therefore, if $T$ is a Littlewood--Richardson tableau of content $\mu_{(k)}$, each of the labels $1, \ldots, k + 1$ appear at least once in $T$.
    And by Lemma \ref{lem:LR distinct rows}, each of these labels must first occur in distinct rows.
    But by assumption, $\ell(\alpha) \leq k$, so $T$ cannot have shape $\alpha / \nu$.
\end{proof}

\subsection{The involution case}
This subsection proves Theorem~\ref{thm:classical involution}, solving the $k = \min\{m, n\} - 1$ case of Problem~\ref{prob:classical}(2).
Assume without loss of generality that $m \leq n$, so that $k = m - 1$, and let $\alpha, \beta$ be a pair of partitions such that $|\alpha| = |\beta|$, $\ell(\alpha) \leq m$, and $\ell(\beta) \leq n$.
Let $\nu_+ = \nu_+(\rtrim_1(\alpha), \beta)$ and let $\nu^1_- = \nu_-^1(\rtrim_1(\alpha), \beta)$.

The next proposition effectively solves one case of Problem~\ref{prob:classical}(2) when $k=m-1$.
\begin{prop}
    \label{prop:k = m - 1 exception}
    Let $\mu$ be a row partition.
    If $\ell(\alpha) = m$, then
    \[
        C'_{\alpha, \alpha, m - 1}(\mu, \nu) = \begin{cases}
            1 & \text{if $\mu = \emptyset$ and $\alpha = \nu$}, \\
            1 & \text{if $\mu = (1)$ and $\nu = \rtrim_1(\alpha) = \nu_+$}, \\
            0 & \text{otherwise}.
        \end{cases}
    \]
\end{prop}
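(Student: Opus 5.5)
The plan is to compute the two Littlewood--Richardson coefficients in $C'_{\alpha,\alpha,m-1}(\mu,\nu) = c^\alpha_{\nu,\mu_{(m-1)}}\, c^\alpha_{\nu,(\mu')_{(m-1)}}$ explicitly, using the fact that for a row partition $\mu$ both stabilized shapes are very simple.

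\textbf{Step 1: the stabilized shapes.} Write $\mu = (s)$ with $s \geq 0$. If $s = 0$, then $\mu_{(m-1)} = (\mu')_{(m-1)} = \emptyset$. In that case $C'_{\alpha,\alpha,m-1}(\emptyset,\nu)$ equals $1$ exactly when $\nu = \alpha$, since the only tableau of content $\emptyset$ is the empty one. This gives the first line of the statement, as in Proposition~\ref{prop:classical coeffs. = 1 if equal}.

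If $s \geq 1$, then $r(\mu) = r(\mu') = 1$. Hence
\[
\mu_{(m-1)} = (s, 1^{m-1}) = H(s,m) \quad\text{and}\quad (\mu')_{(m-1)} = (1^{s+m-1}),
\]
the latter being a column partition.

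\textbf{Step 2: the column factor forces $s \leq 1$.} By the dual Pieri rule (Theorem~\ref{thm:dual Pieri rule}), $c^\alpha_{\nu,(1^{s+m-1})}$ is nonzero only if $\alpha/\nu$ is a vertical strip of size $s+m-1$. A vertical strip has at most one box in each row, and $\ell(\alpha) = m$, so it has at most $m$ boxes. Thus $s + m - 1 \leq m$, which gives $s \leq 1$. Consequently $C'_{\alpha,\alpha,m-1}(\mu,\nu) = 0$ for every $s \geq 2$ and every $\nu$.

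\textbf{Step 3: the case $s = 1$.} Here $H(1,m) = (1^m)$, so both factors are $c^\alpha_{\nu,(1^m)}$. By the dual Pieri rule this coefficient is $1$ exactly when $\alpha/\nu$ is a vertical strip with $m$ boxes, and $0$ otherwise. Since $\ell(\alpha) = m$, such a strip must contain exactly one box in each of the $m$ rows of $\alpha$. Because $\nu$ is a partition contained in $\alpha$, the box removed from each row must be the rightmost box of that row. Hence $\nu = \rtrim_1(\alpha)$.

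Conversely, $\alpha/\rtrim_1(\alpha)$ is a vertical strip of size $m$, so the product of the two factors is $1\cdot 1 = 1$ in this case. Finally, $\nu_+ = \nu_+(\rtrim_1(\alpha),\alpha) = \rtrim_1(\alpha)\cap\alpha = \rtrim_1(\alpha)$, which gives the stated identification $\nu = \nu_+$.

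There is no real obstacle in this argument. The one step that needs care is the uniqueness claim in Step~3: a vertical strip of size $\ell(\alpha)$ inside $\alpha$ must consist of the last box of every row. This holds because the strip has at most one box per row, so with $m$ boxes and $m$ rows it meets every row exactly once, and the requirement that $\nu$ be a partition then forces each removed box to be the rightmost one in its row.
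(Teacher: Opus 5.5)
Your proposal is correct and follows essentially the same route as the paper: split on $|\mu| = 0$, $|\mu| = 1$, $|\mu| \geq 2$, and apply the dual Pieri rule to the column partition $(\mu')_{(m-1)} = (1^{|\mu|+m-1})$, using $\ell(\alpha)=m$ to bound the size of a vertical strip in $\alpha$. You also spell out two points the paper leaves implicit: why a vertical strip of size $m$ forces $\nu = \rtrim_1(\alpha)$, and why $\nu_+ = \rtrim_1(\alpha)\cap\alpha = \rtrim_1(\alpha)$.
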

\begin{proof}
    First, suppose that $|\mu| = 0$.
    Then $\mu_{(m-1)} = (\mu')_{(m-1)} = \emptyset$, and the trivial tableau of weight $1$ has shape $\alpha / \alpha$.

    Now suppose that $|\mu| = 1$.
    Then the $(m-1)$-stabilizations $\mu_{(m-1)} = (\mu')_{(m-1)} = (1^m)$ are both equal to the column partition of size $m$.
    Thus, by the dual Pieri rule (Theorem \ref{thm:dual Pieri rule}),
    \[
        c^\alpha_{\nu, \mu_{(m-1)}} = c^\alpha_{\nu, (\mu')_{(m-1)}} = \begin{cases}
            1 & \text{if $\alpha / \nu$ is a vertical strip of size $m$}, \\
            0 & \text{otherwise}.
        \end{cases}
    \]
    But $\alpha / \nu$ is a vertical strip of size $m$ if and only if $\nu = \rtrim_1(\alpha)$.

    Finally, suppose that $\mu$ is a row partition with $|\mu|>1$.
    Then $(\mu')_{(m-1)}$ is a column partition with at least $m + 1$ boxes.
    But $\ell(\alpha) = m$, so $\alpha / \nu$ cannot be a vertical strip with at least $m+1$ boxes for any partition $\nu$.
    Thus $c^\alpha_{\nu, (\mu')_{(m-1)}} = 0$ for all $\nu$ by the dual Pieri rule.
\end{proof}
\begin{prop}
    \label{prop:k = m - 1 nu interval}
    Suppose that $\alpha \neq \beta$.
    If $\nu^1_- \subseteq \nu_+$ and $\ell(\alpha) = m$, then there exists a bijection between the interval $[\nu_-^1, \nu_+]$ and the set $X'_{\alpha, \beta, m - 1}$.
    On the other hand, if $\nu^1_- \not\subseteq \nu_+$ or $\ell(\alpha) < m$, then $X'_{\alpha, \beta, m - 1}$ is empty.
\end{prop}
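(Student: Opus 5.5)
Since $k = m-1$, the constraint $\mu\subseteq[m-k]\times[n-k] = [1]\times[n-m+1]$ forces $\mu$ to be a row partition $(s)$ with $0\le s\le n-m+1$. I would split on $s$, rewrite both Littlewood--Richardson coefficients using the Pieri-type rules of Section~\ref{subsec:Pieri rules}, and then show that the surviving pairs $(\mu,\nu)$ are indexed exactly by $\nu\in[\nu_-^1,\nu_+]$, each contributing one element.

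First, if $s = 0$, then $\mu_{(m-1)} = (\mu')_{(m-1)} = \emptyset$. A tableau of empty content has shape $\alpha/\nu$ only when $\nu=\alpha$, and likewise $\nu=\beta$ on the other side. Since $\alpha\neq\beta$, there are no such terms.

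If $s\ge 1$, then $r(\mu) = r(\mu') = 1$, so $\mu_{(m-1)} = H(s,m)$ is a hook and $(\mu')_{(m-1)} = (1^{s+m-1})$ is a column. On the $\alpha$ side I apply the hook Pieri rule (Theorem~\ref{thm:hook Pieri rule}) with $h=m$; its hypothesis $\ell(\alpha)\le m$ holds. It gives $c^\alpha_{\nu,H(s,m)}=1$ exactly when $\ell(\alpha)=m$, $\nu\subseteq\rtrim_1(\alpha)$, and $\rtrim_1(\alpha)/\nu$ is a horizontal strip; otherwise the coefficient is $0$. On the $\beta$ side the dual Pieri rule (Theorem~\ref{thm:dual Pieri rule}) gives $c^\beta_{\nu,(1^{s+m-1})}=1$ exactly when $\beta/\nu$ is a vertical strip of size $s+m-1$. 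In particular, if $\ell(\alpha)<m$ every term vanishes, so $X'_{\alpha,\beta,m-1}=\emptyset$.

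Next I translate these conditions into the interval. The set $\rtrim_1(\alpha)/\nu$ is a horizontal strip with $\nu\subseteq\rtrim_1(\alpha)$ iff $\ctrim_1(\rtrim_1(\alpha))\subseteq\nu\subseteq\rtrim_1(\alpha)$. Similarly, $\beta/\nu$ is a vertical strip iff $\rtrim_1(\beta)\subseteq\nu\subseteq\beta$. Together these say exactly that $\nu\in[\nu_-^1,\nu_+]$, using Definition~\ref{def:bound partitions} with $\lambda=\rtrim_1(\alpha)$ and $\mu=\beta$. Hence if $\nu_-^1\not\subseteq\nu_+$ the set is empty.

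If $\nu^1_-\subseteq\nu_+$ and $\ell(\alpha)=m$, I define the bijection by sending $\nu\in[\nu^1_-,\nu_+]$ to the unique element of $X'_{\alpha,\beta,m-1}((s),\nu)$ with $s = |\alpha|-|\nu|-(m-1)$. The size condition on the $\beta$ side, $|\beta/\nu| = s+m-1$, then holds automatically because $|\alpha|=|\beta|$. Conversely, every element of $X'_{\alpha,\beta,m-1}$ arises this way, since $\nu$ must lie in the interval and $s$ is forced by size.

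The step I expect to need the most care is checking that this $s$ is always admissible, i.e.\ $1\le s\le n-m+1$, so that the pair is genuinely indexed in $X'_{\alpha,\beta,m-1}$. For the lower bound, $\nu\subseteq\rtrim_1(\alpha)$ and $\ell(\alpha)=m$ give $|\alpha/\nu|\ge m$, so $s\ge 1$. For the upper bound, $\beta/\nu$ is a vertical strip, so it has at most $\ell(\beta)\le n$ boxes; thus $s+m-1\le n$, i.e.\ $s\le n-m+1$.
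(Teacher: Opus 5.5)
Your proof is correct and follows the paper's route. You restrict to row partitions $\mu=(s)$, apply the hook Pieri rule to $\alpha/\nu$ and the dual Pieri rule to $\beta/\nu$, identify the surviving $\nu$ with the interval $[\nu_-^1,\nu_+]$, and recover $s$ from $|\alpha/\nu|$. Beyond the paper's argument, you treat $s=0$ separately (there $\mu_{(m-1)}=\emptyset$ is not a hook, and this is where $\alpha\neq\beta$ is actually used) and you check $1\le s\le n-m+1$; the paper's proof leaves both points implicit.
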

\begin{proof}
    We have that
    \[
        X'_{\alpha, \beta, m - 1}
        = \bigsqcup_{\substack{\mu, \nu \\ \mu \subseteq [m - (m - 1)] \times [n - (m - 1)]}} X'_{\alpha, \beta, m - 1}(\mu, \nu)
        = \bigsqcup_{\substack{\mu, \nu \\ \mu \subseteq [1] \times [n - m + 1]}} X'_{\alpha, \beta, m - 1}(\mu, \nu).
    \]
    Note that $[1] \times [n - m + 1] = (n - m + 1)$ is a row partition, so those $\mu$ appearing in the above equation are row partitions as well.
    Furthermore, a tableau of shape $\alpha / \nu$ can have content $\mu_{(k)}$ only if $|\alpha / \nu| = |\mu_{(k)}|$, so $X'_{\alpha, \beta, m - 1}(\mu, \nu)$ is nonempty only if $\mu = (|\alpha| - |\nu| - m+1)$.
    Hence if $X'_{\alpha, \beta, m - 1}(\mu, \nu)$ is nonempty, then $\mu_{(m-1)}$ is the hook partition $H(|\mu|, m)$ and $(\mu')_{(m-1)}$ is the column partition $(1^{|\mu| + m - 1}) = (1^{|\alpha|-|\nu|})$.

    Therefore, by the hook Pieri rule (Theorem \ref{thm:hook Pieri rule}),
    \[
        c^\alpha_{\nu, H(|\mu|, m)} = \begin{cases}
            1 & \text{if $\ell(\alpha) = m$, $\nu \subseteq \rtrim_1(\alpha)$, and $\rtrim_1(\alpha) / \nu$ is a horizontal strip}, \\
            0 & \text{otherwise}.
        \end{cases}
    \]
    Likewise, by the dual Pieri rule,
    \[
        c^\beta_{\nu, (1^{|\alpha| - |\nu|})} = \begin{cases}
            1 & \text{if $\nu \subseteq \beta$ and $\beta / \nu$ is a vertical strip}, \\
            0 & \text{otherwise}.
        \end{cases}
    \]
    Note that $\nu \subseteq \rtrim_1(\alpha)$ and $\nu \subseteq \beta$ if and only if $\nu \subseteq \nu_+$.
    Similarly, $\rtrim_1(\alpha) / \nu$ is a horizontal strip and $\beta / \nu$ is a vertical strip if and only if $\nu \supseteq \nu^1_-$.
    Therefore,
    \[
        C'_{\alpha, \beta, m-1}(\mu, \nu)
        = c^\alpha_{\nu, \mu_{(m-1)}}c^\beta_{\nu, (\mu')_{(m-1)}} = \begin{cases}
            1 & \text{if $\ell(\alpha) = m$ and $\nu \in [\nu^1_-, \nu_+]$}, \\
            0 & \text{otherwise}.
        \end{cases}
    \]
    For all $\nu \in [\nu^1_-, \nu_+]$, we have shown that if $\mu$ is the row partition $(|\alpha| - |\nu| - m+1)$, then $C'_{\alpha, \beta, m-1}(\mu, \nu) = 1$, and otherwise, $C'_{\alpha, \beta, m-1}(\mu, \nu) = 0$.
    Associating each $\nu$ to the unique element of $X'_{\alpha, \beta, m-1}((|\alpha|-|\nu|-m+1), \nu)$ gives us our desired bijection.
\end{proof}
\begin{prop}
    \label{prop:equal if nu_- = nu_+}
    If $\nu^1_- = \nu_+$, then $\alpha = \beta$.
\end{prop}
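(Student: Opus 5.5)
The plan is to rerun the argument of Proposition~\ref{prop:stab. if nu_- = nu_+}, now with $\lambda = \rtrim_1(\alpha)$ and $\mu = \beta$ in place of $\alpha$ and $\beta'$. The bookkeeping of sizes changes, and I expect it to force all corners of $\nu_+$ to be shadowed rightwards.

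Let $C = \{\mathsf c_1,\dots,\mathsf c_q\}$, $u$, and $v$ be as in Lemma~\ref{lem:* if nu_- = nu_+}, applied to the pair $(\rtrim_1(\alpha), \beta)$. I will allow $u = 0$ to mean that no corner is shadowed downwards. Strictly, the lemma takes $u, v \in [q]$, so this boundary case has to be checked directly from its proof. Lemma~\ref{lem:stab. shapes if nu_- = nu_+} then describes the two differences:
\begin{itemize}
\item $\rtrim_1(\alpha)/\nu_+$ consists of exactly one box in each of the first $j_u$ columns;
\item $\beta/\nu_+$ consists of exactly one box in each of the first $i_v$ rows.
\end{itemize}
Since $|\alpha| = |\rtrim_1(\alpha)| + \ell(\alpha)$, the hypothesis $|\alpha| = |\beta|$ now gives
\[
i_v = j_u + \ell(\alpha),
\]
rather than the equality $j_u = i_v$ used before.

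The key step is to show that $j_u = 0$, that is, no corner of $\nu_+$ is shadowed downwards. Suppose $u \geq 1$. Then $\mathsf c_u$ is shadowed downwards, so $\rtrim_1(\alpha)$ contains the box $(i_u+1, j_u)$. This gives
\[
\ell(\alpha) \geq \ell(\rtrim_1(\alpha)) \geq i_u + 1.
\]
On the other hand, $v \geq u$ forces $i_v \leq i_u$, because corners are ordered bottom-left to top-right. Combining these, $i_v \leq i_u < \ell(\alpha) \leq j_u + \ell(\alpha) = i_v$, a contradiction. Hence $u = 0$.

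By Lemma~\ref{lem:everything is shadowed}(2) every corner is then shadowed rightwards, so $v = 1$ and $i_v = i_1 = \ell(\nu_+)$. (If $\nu_+ = \emptyset$, then $\nu_+ = \rtrim_1(\alpha)$ and $i_v = 0$, which forces $\ell(\alpha) = 0$, and the claim is trivial.) With $u = 0$ we get $\rtrim_1(\alpha) = \nu_+$, and $\ell(\nu_+) = i_v = \ell(\alpha)$.

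It remains to reconstruct both shapes. The partition $\beta$ is $\nu_+$ with one box appended to each of its first $\ell(\alpha) = \ell(\nu_+)$ rows, i.e.\ to every row of $\nu_+$. The partition $\alpha$ is $\rtrim_1(\alpha) = \nu_+$ with one box appended to each of its $\ell(\alpha)$ rows. Since $\ell(\nu_+) = \ell(\alpha)$, no row of $\alpha$ has length $1$, so these are the same rows. Therefore $\alpha = \beta$.

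I expect the main obstacle to be the boundary cases in applying Lemma~\ref{lem:* if nu_- = nu_+}: the cases $u = 0$ and $\nu_+ = \emptyset$, which the lemma as stated (with $u, v \in [q]$) does not cover. These need a careful check from its proof. The inequality chain establishing $j_u = 0$ is the conceptual heart of the argument.
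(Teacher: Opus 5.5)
\textbf{The gap.} Your argument follows the paper's route: apply Lemma~\ref{lem:stab. shapes if nu_- = nu_+} to $(\rtrim_1(\alpha),\beta)$, compare sizes, and force $j_u=0$. Your contradiction through $i_v\le i_u<\ell(\alpha)$ is only a longer path to the paper's one-line bound $i_v\le\ell(\nu_+)\le\ell(\alpha)$.

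The real problem is not the $u=0$ convention you worry about. It is the identity $i_v=j_u+\ell(\alpha)$, which rests on the count $|\beta/\nu_+|=i_v$ from Lemma~\ref{lem:stab. shapes if nu_- = nu_+}(2). As stated, that part of the lemma only sees boxes of $\beta/\nu_+$ lying to the right of rows of $\nu_+$. It misses boxes strictly below the last row of $\nu_+$, which do occur, necessarily in column $1$ since $\rtrim_1(\beta)\subseteq\nu_+$.

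For $\alpha=\beta=(2,1)$ we have $\nu_+=\nu^1_-=(1)$, $u=0$, $v=1$ and $i_v=1$. Yet $\beta/\nu_+=\{(1,2),(2,1)\}$ and $\ell(\alpha)=2$. So $i_v=j_u+\ell(\alpha)$, $\ell(\nu_+)=\ell(\alpha)$, and ``no row of $\alpha$ has length $1$'' all fail there.

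Your $\nu_+=\emptyset$ subcase breaks in the same way: $\alpha=\beta=(1^a)$ gives $\nu_+=\nu^1_-=\emptyset$ with $\ell(\alpha)=a\neq 0$. The paper's written proof uses the same count $|\beta/\nu_+|=i_v$, so it has the same hole, although the proposition itself is true.

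\textbf{A repair.} Avoid the faulty count as follows.
\begin{enumerate}
\item The hypothesis gives $\rtrim_1(\beta)\subseteq\nu^1_-=\nu_+\subseteq\rtrim_1(\alpha)$. Since $|\alpha|=|\rtrim_1(\alpha)|+\ell(\alpha)$ and $|\beta|=|\rtrim_1(\beta)|+\ell(\beta)$, you get $\ell(\beta)-\ell(\alpha)=|\rtrim_1(\alpha)|-|\rtrim_1(\beta)|\ge 0$.
\item If this difference is $0$, then $\rtrim_1(\alpha)=\rtrim_1(\beta)$ and $\ell(\alpha)=\ell(\beta)$, so $\alpha=\beta$.
\item Suppose instead $\ell(\beta)>\ell(\alpha)$. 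Then $\rtrim_1(\alpha)\neq\emptyset$, since otherwise the right-hand side above is $0$. Put $L=\ell(\rtrim_1(\alpha))$. The box $(L,1)$ lies in $\nu_+$, so the bottom corner $\mathsf c_1$ of $\nu_+$ sits in row $L$ and cannot be shadowed downwards.
\item By Lemma~\ref{lem:everything is shadowed}(2), $\mathsf c_1$ is therefore shadowed rightwards. The argument in the proof of Lemma~\ref{lem:* if nu_- = nu_+} passes this to every corner, and statement (5) then passes it to the last box of every row of $\nu_+$.
\item Hence $\beta_i>(\nu_+)_i$ for all $i\le L$. This forces $\rtrim_1(\alpha)=\nu_+=\rtrim_1(\beta)$, which contradicts $\ell(\beta)>\ell(\alpha)$.
\end{enumerate}
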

\begin{proof}
    Let $C$, $u$, and $v$ be as in Lemma~\ref{lem:stab. shapes if nu_- = nu_+} (taking $\lambda = \rtrim_1(\alpha)$ and $\mu = \beta$).
    Then by Lemma \ref{lem:stab. shapes if nu_- = nu_+}, $\rtrim_1(\alpha) / \nu_+$ consists of one box in each of the first $j_u$ columns and $\beta/\nu_+$ consists of one box in each of the first $i_v$ rows.
    Hence $|\rtrim_1(\alpha) / \nu_+| = j_u$ and $|\beta / \nu_+| = i_v$.
    But by assumption, $|\alpha| = |\beta|$, and so
    \[
        j_u
        = |\rtrim_1(\alpha) / \nu_+|
        = |\alpha / \nu_+| - |\alpha / \rtrim_1(\alpha)|
        = |\beta / \nu_+| - \ell(\alpha)
        = i_v - \ell(\alpha).
    \]
    Furthermore, $\mathsf c_v \in \rtrim_1(\alpha)\subseteq\alpha$, so $i_v \leq \ell(\alpha)$.
    Therefore, $i_v = \ell(\alpha)$ and $j_u = |\rtrim_1(\alpha) / \nu_+| = 0$, that is, $\nu_+ = \rtrim_1(\alpha)$.

    We have that $\beta / \nu_+ = \beta / \rtrim_1(\alpha)$ consists of exactly one box in each of the first $\ell(\alpha)$ rows.
    But by construction, $\alpha / \rtrim_1(\alpha)$ also consists of exactly one box in each of the first $\ell(\alpha)$ rows.
    Therefore, $\alpha = \beta$.
\end{proof}
\begin{theorem}
    \label{thm:classical involution}
    Suppose either that $\alpha \neq \beta$ or that $\alpha = \beta$ and $\ell(\alpha) = m$.
    Then there exists a sign-reversing involution on $X'_{\alpha, \beta, m - 1}$.
\end{theorem}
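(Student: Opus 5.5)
We split into the two cases of the hypothesis, mirroring the proof of Theorem~\ref{thm:k = 1 solution}.

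\textbf{Case $\alpha = \beta$ with $\ell(\alpha) = m$.} Every $\mu$ contributing to $X'_{\alpha,\alpha,m-1}$ satisfies $\mu\subseteq[1]\times[n-m+1]$, so $\mu$ is a row partition. Proposition~\ref{prop:k = m - 1 exception} then shows that $X'_{\alpha,\alpha,m-1}$ consists of exactly two elements:
\begin{itemize}
\item the trivial pair indexed by $(\mu,\nu)=(\emptyset,\alpha)$, of sign $+1$;
\item the pair of column-strip tableaux indexed by $(\mu,\nu)=((1),\rtrim_1(\alpha))$, of sign $(-1)^{1}=-1$.
\end{itemize}
The involution swapping these two elements is therefore sign-reversing.

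\textbf{Case $\alpha\neq\beta$, subcase $\nu^1_-\not\subseteq\nu_+$ or $\ell(\alpha)<m$.} Proposition~\ref{prop:k = m - 1 nu interval} shows that $X'_{\alpha,\beta,m-1}$ is empty, so the empty map is a (vacuous) sign-reversing involution.

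\textbf{Case $\alpha\neq\beta$, subcase $\nu^1_-\subseteq\nu_+$ and $\ell(\alpha)=m$.} Proposition~\ref{prop:k = m - 1 nu interval} gives a bijection $\phi\colon[\nu^1_-,\nu_+]\to X'_{\alpha,\beta,m-1}$, sending $\nu$ to the unique element of $X'_{\alpha,\beta,m-1}(\mu,\nu)$ with $\mu=(|\alpha|-|\nu|-m+1)$. Its sign is $(-1)^{|\mu|}=(-1)^{|\alpha|-|\nu|-m+1}$, so, for fixed $\alpha$, it changes parity whenever $|\nu|$ changes by one.

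By the contrapositive of Proposition~\ref{prop:equal if nu_- = nu_+}, the assumption $\alpha\neq\beta$ forces $\nu^1_-\neq\nu_+$, so we may fix a box $\mathsf c\in\nu_+/\nu^1_-$. Lemma~\ref{lem:nu corners}, applied with $\lambda=\rtrim_1(\alpha)$ and $\mu=\beta$, shows that every box of $\nu_+/\nu^1_-$ is a corner of $\nu_+$, and that $[\nu^1_-,\nu_+]$ is a subset lattice on these boxes. Hence $\toggle_{\mathsf c}$, which adds $\mathsf c$ to $\nu$ or removes it, is a well-defined involution on $[\nu^1_-,\nu_+]$ that changes $|\nu|$ by exactly one.

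Conjugating by the bijection, $\phi\circ\toggle_{\mathsf c}\circ\phi^{-1}$ is an involution on $X'_{\alpha,\beta,m-1}$ that reverses sign.

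\textbf{Main subtlety.} The delicate point is that the sign is $(-1)^{|\mu|}$ rather than $(-1)^{|\nu|}$. We must therefore check that, in the $\phi$ parametrization, $|\mu|$ is an affine function of $|\nu|$ with slope $-1$, so that toggling a single box flips the sign. This holds because $|\mu_{(m-1)}| = |\mu|+m-1 = |\alpha/\nu|$.

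We also need that $\mu=(|\alpha|-|\nu|-m+1)$ is an admissible row partition for every $\nu$ in the interval: its length is nonnegative and at most $n-m+1$. Nonnegativity holds because the hook Pieri condition from the proof of Proposition~\ref{prop:k = m - 1 nu interval} forces $\nu\subseteq\rtrim_1(\alpha)$, so $|\alpha/\nu|\ge \ell(\alpha)=m$. We take the upper bound from that proposition's statement.
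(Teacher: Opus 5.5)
Your proposal is correct and follows the paper's proof essentially verbatim: it uses the same three-way case split, with Proposition~\ref{prop:k = m - 1 exception} for $\alpha=\beta$, the emptiness part of Proposition~\ref{prop:k = m - 1 nu interval}, and $\toggle_{\mathsf c}$ conjugated by the bijection $\phi$ after invoking Proposition~\ref{prop:equal if nu_- = nu_+} and Lemma~\ref{lem:nu corners}. Your explicit checks that $|\mu| = |\alpha|-|\nu|-m+1$ (so toggling one box flips the sign) and that this $\mu$ is admissible are left implicit in the paper and are sound; the upper bound you took on faith also follows directly, since $\nu \supseteq \rtrim_1(\beta)$ makes $\beta/\nu$ a vertical strip, so $|\alpha/\nu| = |\beta/\nu| \le \ell(\beta) \le n$.
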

\begin{proof}
    There are three cases to consider.
    \begin{enumerate}
    \item
        Suppose that $\alpha = \beta$ and $\ell(\alpha) = m$.
        Then by Proposition \ref{prop:k = m - 1 exception}, $X'_{\alpha, \beta, m - 1}$ contains two elements, one corresponding to $\mu = \emptyset$ and one corresponding to $\mu = (1)$.
        These two elements are of opposite sign, so mapping them to each other gives the desired involution.
    \item
        Suppose that $\alpha \neq \beta$, but $\nu^1_- \not\subseteq \nu_+$ or $\ell(\alpha) < m$.
        Then by Proposition \ref{prop:k = m - 1 nu interval}, $X'_{\alpha, \beta, m - 1}$ is empty, and so there exists a trivial involution.
    \item
        Suppose that $\alpha \neq \beta$, $\nu^1_- \subseteq \nu_+$, and $\ell(\alpha) = m$.
        Then by the contrapositive of Proposition~\ref{prop:equal if nu_- = nu_+}, $\nu^1_- \neq \nu_+$.
        So we can choose a box $\mathsf c$ from the skew shape $\nu_+ / \nu^1_-$.
        By Lemma~\ref{lem:nu corners}, every box in $\nu_+ / \nu^1_-$ is a corner box, so adding or removing $\mathsf c$ from a partition $\nu \in [\nu^1_-, \nu_+]$ will always yield a valid partition.
        Therefore, the map $\toggle_{\mathsf c} \colon [\nu^1_-, \nu_+] \leftrightarrow [\nu^1_-, \nu_+]$ which either adds or removes $\mathsf c$ is an involution.

        Furthermore, by Proposition~\ref{prop:k = m - 1 nu interval}, there exists a bijection $\phi \colon [\nu^1_-, \nu_+] \to X'_{\alpha, \beta, m-1}$.
        Since $\nu$ and $\toggle_{\mathsf c}(\nu)$ differ by exactly one box, $\phi(\nu)$ and $\phi(\toggle_{\mathsf c}(\nu))$ are of opposite sign.
        Therefore, we can construct our desired sign-reversing involution by mapping $\phi(\nu)$ to $\phi(\toggle_{\mathsf c}(\nu))$.
    \end{enumerate}
\end{proof}

\section{Further directions}\label{sec:further}
In this final section, we discuss challenges in extending our proof methods to solve Problem~\ref{prob:main} for all $k$. 
Section~\ref{sec:dual Cauchy} reduces Problem~\ref{prob:main} to the problem of constructing an explicit sign-reversing involution on the set $X_{\alpha,\beta,k}$ for each $k$ and pair of partitions $(\alpha,\beta)$ with $|\alpha| = |\beta|$ (Problem~\ref{prob:dual}(2)). 
Following the notation of Section~\ref{sec:dual Cauchy}, let
\[Y_{\alpha, \beta, k}(\nu) = \bigsqcup_{\substack{\ell(\lambda) \leq k \\ |\lambda| = |\alpha| - |\nu|}} X_{\alpha, \beta}(\lambda, \nu) \quad \text{ and }\quad  Y_{\alpha,\beta, k}(t) = \bigsqcup_{|\nu| = t} Y_{\alpha,\beta,k}(\nu).\]
By Proposition~\ref{prop:nu interval}, we then have
\[X_{\alpha,\beta,k} = \bigsqcup_{\nu\in[\nu_-^k,\nu_+]}Y_{\alpha,\beta,k}(\nu) = \bigsqcup_{t = |\nu_-^k|}^{|\nu_+|} Y_{\alpha,\beta,k}(t).\]
Lemma~\ref{lem:dual coeffs. k = 1} shows that when $k=1$, each set $Y_{\alpha,\beta,1}(\nu)$ has a unique element, while Lemma~\ref{lem:nu corners} implies that the interval $[\nu_-^1, \nu_+]$ is a subset lattice on the boxes of $\nu_+/\nu_-^1$. 
Together, these results enable the construction of a simple sign-reversing involution in the proof of Theorem~\ref{thm:k = 1 solution}, solving the $k=1$ case of Problem~\ref{prob:main}. 

There are two obstacles in extending this proof strategy for $k>1$. 
First, for $k>1$ the sets $Y_{\alpha,\beta,k}(\nu)$ may have multiple elements or no elements at all.
Second, for $k>1$ the interval $[\nu_-^k, \nu_+]$ need not be a subset lattice. 
Thus the sequence of cardinalities $(|Y_{\alpha,\beta, k}(t)|)_{t=|\nu_-^k|}^{|\nu_+|}$ is not generally a binomial sequence for larger values of $k$---indeed, this sequence may fail to be symmetric or unimodal, and may have internal zeroes.
The following examples illustrate these phenomena.

\begin{example}
    \label{ex:symmetric failure}
    Let $\alpha = (2, 2, 1)$, let $\beta = (3, 1, 1)$, and let $k = 2$.
    Then the elements of $X_{\alpha, \beta, 2}$ are as follows (boxes not in the skew shapes are grayed out):
    \[
        \left(
            ~\begin{yctableau}
                *(gray) & *(gray) \\
                *(gray) & 1 \\
                *(gray)
            \end{yctableau}
            ~,\quad\begin{yctableau}
                *(gray) & *(gray) & *(gray) \\
                *(gray) \\
                1
            \end{yctableau}~
        \right),
        \left(
            ~\begin{yctableau}
                *(gray) & *(gray) \\
                *(gray) & 1 \\
                1
            \end{yctableau}
            ~,\quad\begin{yctableau}
                *(gray) & *(gray) & 1 \\
                *(gray) \\
                1
            \end{yctableau}~
        \right),
        \left(
            ~\begin{yctableau}
                *(gray) & *(gray) \\
                *(gray) & 1 \\
                2
            \end{yctableau}
            ~,\quad\begin{yctableau}
                *(gray) & *(gray) & 1 \\
                *(gray) \\
                2
            \end{yctableau}~
        \right),
    \]
    \[
        \left(
            ~\begin{yctableau}
                *(gray) & 1 \\
                *(gray) & 2 \\
                *(gray)
            \end{yctableau}
            ~,\quad\begin{yctableau}
                *(gray) & *(gray) & *(gray) \\
                1 \\
                2
            \end{yctableau}~
        \right),
        \left(
            ~\begin{yctableau}
                *(gray) & 1 \\
                *(gray) & 2 \\
                1
            \end{yctableau}
            ~,\quad\begin{yctableau}
                *(gray) & *(gray) & 1 \\
                1 \\
                2
            \end{yctableau}~
        \right),
        \left(
            ~\begin{yctableau}
                *(gray) & *(gray) \\
                1 & 1 \\
                2
            \end{yctableau}
            ~,\quad\begin{yctableau}
                *(gray) & 1 & 1 \\
                *(gray) \\
                2
            \end{yctableau}~
        \right).
    \]

    Grouping the elements by the size $t$ of the inner shape $\nu$ gives the following sequence:
    \begin{center}
        \begin{tabular}{c|cccccc}
             $t$ & 0 & 1 & 2 & 3 & 4 & 5 \\
             \hline
             $C_{\alpha, \beta, 2}(t)$ & 0 & 0 & 2 & 3 & 1 & 0
        \end{tabular}
    \end{center}
    This sequence is not symmetric.
\end{example}
\begin{example}
    \label{ex:unimodal failure}
    Let $\alpha = (3, 3)$, let $\beta = (2, 2, 2)$, and let $k = 2$.
    Then the elements of $X_{\alpha, \beta, 2}$ are as follows:
    \[
        \left(
            ~\begin{yctableau}
                *(gray) & *(gray) & *(gray) \\
                *(gray) & *(gray) & *(gray) \\
            \end{yctableau}
            ~,\quad\begin{yctableau}
                *(gray) & *(gray) \\
                *(gray) & *(gray) \\
                *(gray) & *(gray)
            \end{yctableau}~
        \right),
        \left(
            ~\begin{yctableau}
                *(gray) & *(gray) & *(gray) \\
                *(gray) & *(gray) & 1 \\
            \end{yctableau}
            ~,\quad\begin{yctableau}
                *(gray) & *(gray) \\
                *(gray) & *(gray) \\
                *(gray) & 1
            \end{yctableau}~
        \right),
    \]
    \[
        \left(
            ~\begin{yctableau}
                *(gray) & *(gray) & 1 \\
                *(gray) & 1 & 2 \\
            \end{yctableau}
            ~,\quad\begin{yctableau}
                *(gray) & *(gray) \\
                *(gray) & 1 \\
                1 & 2
            \end{yctableau}~
        \right),
        \left(
            ~\begin{yctableau}
                *(gray) & 1 & 1 \\
                *(gray) & 2 & 2 \\
            \end{yctableau}
            ~,\quad\begin{yctableau}
                *(gray) & *(gray) \\
                1 & 1 \\
                2 & 2
            \end{yctableau}~
        \right).
    \]
    Grouping the elements by the size $t$ of the inner shape $\nu$ gives the following sequence:
    \begin{center}
        \begin{tabular}{c|ccccccc}
             $t$ & 0 & 1 & 2 & 3 & 4 & 5 & 6\\
             \hline
             $C_{\alpha, \beta, 2}(t)$ & 0 & 0 & 1 & 1 & 0 & 1 & 1
        \end{tabular}
    \end{center}
    This sequence is not unimodal and contains internal zeroes.
\end{example}

\begin{table}[ht]
    \begin{tabular}{c|c}
        Sequence & Generating $\alpha$ \\
        \hline
        $1$ & $(0)$ \\
        $1, 1$ & $(1)$ \\
        $1, 2, 2, 1$ & $(2, 1)$ \\
        $1, 1, 0, 1, 1$ & $(2, 2)$ \\
        $1, 2, 2, 2, 1$ & $(2, 2, 1)$ \\
        $1, 3, 6, 7, 4, 1$ & $(3, 2, 1)$ \\
        $1, 3, 6, 8, 5, 1$ & $(4, 2, 1)$ \\
        $1, 2, 2, 3, 3, 1$ & $(3, 3, 1)$ \\
        $1, 3, 6, 8, 6, 2$ & $(4, 3, 1)$ \\
        $1, 2, 2, 4, 5, 2$ & $(4, 2, 2)$ \\
        $1, 3, 6, 9, 7, 2$ & $(4, 2, 1, 1)$ \\
        $1, 2, 2, 4, 5, 2$ & $(3, 3, 1, 1)$
    \end{tabular}
    \caption{Table of sequences $(C_{\alpha,\alpha',2}(t))_{t\in\Z_{\geq 0}}$}\label{tab:seqs}
\end{table}

Table~\ref{tab:seqs} lists all of the sequences of the form $(C_{\alpha, \alpha', 2}(t))_{t \in \Z_{\geq 0}}$, where $|\alpha| \leq 8$. 
For general $\alpha, \beta$ of equal size, the sequence $(C_{\alpha, \beta, k}(t))_{t \in \Z_{\geq 0}}$ will consist of $|\alpha| - |\alpha \cap \beta'|$ many initial zeroes, followed by the sequence $(C_{\alpha \cap \beta', \alpha' \cap \beta, k}(t))_{t \in \Z_{\geq0}}$.

We observe that in all of these examples, there still exists a sign-reversing involution on $X_{\alpha,\alpha',2}$, which, like $\toggle$, adds or removes exactly one box from the inner shape.
For instance, in Example \ref{ex:symmetric failure}, we could map each pair to its counterpart in the same column.
Likewise, in Example \ref{ex:unimodal failure}, we could map each pair to its counterpart in the same row.
Despite these small examples, we do not yet know how to construct such an involution in general.

\section*{Acknowledgements}
We thank Jaewon Min, Abigail Price, Linus Setiabrata, and Alexander Yong for helpful conversations during the preparation of this material. 
This project began as part of the ICLUE program at UIUC in Summer 2025, organized by Alexander Yong and supported by an NSF RTG in Combinatorics (DMS 1937241). 
AS was also supported by an NSF graduate fellowship under Grant No. DGE 21-46756.

\end{document}